\documentclass[12pt, reqno]{amsart}

\advance \topmargin by -\headheight
\advance \topmargin by -\headsep
\evensidemargin \oddsidemargin
\usepackage{graphicx} 
\usepackage{amsfonts}
\usepackage{amsmath}
\usepackage{amssymb}
\usepackage{amsthm}
\usepackage{tikz-cd}

\usepackage{amsmath, amsthm, amssymb, amsfonts}
 \usepackage{hyperref}
\usepackage{enumerate}
\usepackage{url}
\usepackage{dsfont}
\usepackage{mathrsfs}

\usepackage{bm}

\usepackage{xcolor}

\usepackage{fancyvrb}

\usepackage[english]{babel}
\usepackage{color}
\usepackage{enumerate}

\newcommand{\G}{\mathbb{G}}
\renewcommand{\P}{\mathbb{P}}

\newcommand{\id}{\textrm{id}}
\newcommand{\C}{\mathbb{C}}

\newcommand{\Vsum}{\mathcal{V}_{\text{sum}}}
\renewcommand{\vec}[1]{\mathbf{#1}}

\newcommand{\Z}{\mathbb{Z}}
\newcommand{\rk}{\mathrm{rk}}

\renewcommand{\vec}{\bm}

\title[Sum-product phenomenon for algebraic groups and Bremner's conjecture]{Uniform sum-product phenomenon for algebraic groups and Bremner's conjecture}
\author{Joseph Harrison, Akshat Mudgal, Harry Schmidt}
\address{Mathematics Institute, Zeeman Building, University of Warwick, Coventry CV4 7AL, United Kingdom}
\email{joseph.s.harrison@warwick.ac.uk}
\email{Akshat.Mudgal@warwick.ac.uk}
\email{Harry.Schmidt@warwick.ac.uk}

\subjclass[2020]{11B13, 11B25, 11B30, 11G05} 
\keywords{Bourgain--Chang sum-product result over algebraic groups, Bremner's conjecture, Mordell--Lang, Freiman--Ruzsa theorem, Elekes--Szab\'{o}}

\newtheorem{theorem}{Theorem}[section]
\newtheorem{lemma}[theorem]{Lemma}
\newtheorem{corollary}[theorem]{Corollary}
\newtheorem{proposition}[theorem]{Proposition}
\newtheorem{conjecture}[theorem]{Conjecture}

\newtheorem{thm}[theorem]{Theorem}
\newtheorem{lem}[theorem]{Lemma}

\newtheorem{cor}[theorem]{Corollary}

\theoremstyle{remark}
\newtheorem{remark}[theorem]{Remark}
\newtheorem{example}[theorem]{Example}

\theoremstyle{definition}
\newtheorem{definition}[theorem]{Definition}

\numberwithin{equation}{section}

\begin{document}

 \maketitle

\begin{abstract} 
In this paper we combine methods from additive combinatorics and Diophantine geometry to study the generalised sum-product phenomenon in algebraic groups. As an application of this circle of ideas, we resolve  a conjecture of Bremner on arithmetic progressions in coordinates of elliptic curves, along with various other generalisations studied in the literature. 

We also prove a uniform Bourgain--Chang-type sum-product estimate for general $1$-dimensional algebraic groups $G$ over $\mathbb{C}$. Using these ideas, we provide an alternative solution to a problem of Bays--Breuillard. Furthermore, we show an Elekes--Szab\'{o} type result in the same setting for sets with small doubling, improving upon an earlier result of Bays--Breuillard when $G$ is not $\mathbb{G}_a$. Our power saving here can be shown to be quantitatively optimal.

We use a combination of deep, classical results in Diophantine geometry due to David--Philippon, Laurent and Evertse--Schmidt--Schlickewei along with the recent breakthrough work on the weak Polynomial Freiman--Ruzsa conjecture over integers due to Gowers--Green--Manners--Tao.

\end{abstract}

\section{Introduction} \label{introduction}

 Many questions in number theory concern an incongruence between two distinct arithmetic structures.  Bremner \cite{Bremner} made this observation in the course of his investigations into the length of arithmetic progressions in the coordinates of the rational points of an elliptic curve.  Here, additive structure, represented by an arithmetic progression, and the group structure on the elliptic curve should not correlate with each other, which led him to suspect that the length of a possible arithmetic progression should be bounded solely in terms of the rank of the curve. He confirmed his suspicions partly in work  with Silverman and Tzanakis \cite{BremnerSilverman}. 
 
 Another example of this phenomenon is the infamous sum--product conjecture in combinatorial number theory due to Erd\H{o}s--Szemer\'{e}di \cite{Erdos--Szemeredi_on_sums_of_and_products_of_integers}. This concerns expansion of finite sets of integers under the operation of taking sums or products, a manifestation of the incompatibility of additive and multiplicative structure. Progress towards this type of problem has led to a variety of applications in number theory and harmonic analysis. 
 
 A third example of this phenomenon concerns expansion of arbitrary sets of real numbers under sufficiently non-degenerate polynomial maps. This was first investigated by Elekes--R\'{o}nyai \cite{ER2000} and Elekes--Szab\'{o} \cite{ES2012}, and since their work, this subject has seen significant activity, in part due to its connections to questions in combinatorial geometry and topics in model theory. In this paper, we unify these a priori disparate themes and prove effective, quantitative results regarding them.

 We first turn towards the question of Bremner which concerns upper bounds on possible lengths of arithmetic progressions in the coordinates of the rational points of an elliptic curve. Replacing the additive structure by the multiplicative one, one may similarly suspect that the length of a geometric progression should be bounded, see work of Bremner--Ulas \cite{Brmenergeom}. Another line of inquiry, in the same spirit, concerns the length of the longest sequence of consecutive squares in the coordinates of rational points on elliptic curves, see work of Kamel--Sadek  \cite{Kamelsquares}.
 
 As a straightforward consequence of the methods discussed in this article, we confirm Bremner's speculation \cite[\S5]{Bremner} in the following general result.

 \begin{thm} \label{thmBremner} There is an effectively computable constant $C \geq 1$ with the following property. Let $E$ be an elliptic curve in Weierstrass form 
 \begin{align}\label{Weierstrass}
      y^2 = x^3 + ax + b,~~  a,b \in \mathbb{Q},
 \end{align}
 and let $r$ be the rank of $E(\mathbb{Q})$. Let $X = \{x(P): P \in E(\mathbb{Q})\}$ and $Y = \{y(P): P \in E(\mathbb{Q})\}$. Let $A$ be either an arithmetic progression, a geometric progression or a set of the form 
 \[ \{ u^2, (u+d)^2, (u+2d)^2, \dots, (u+dl)^2 \}, \]
 with $u,d\in \mathbb{Q}$, and $l \in \mathbb{N}$. If $A \subseteq X$ or $A \subseteq Y$, then $|A| \leq C^{1 +r}$.
\end{thm}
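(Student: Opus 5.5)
Each of the three progression types is a set $A$ with very small doubling in an algebraic group. An arithmetic progression satisfies $|A+A|\le 2|A|$ in $\mathbb{G}_a$. A geometric progression satisfies $|A\cdot A|\le 2|A|$ in $\mathbb{G}_m$. A progression of squares $\{(u+jd)^2\}$ is the image of an arithmetic progression under the squaring map; after passing to square roots (choosing $\sqrt{a}=u+jd$), it becomes additive structure in the "square-root coordinate". Meanwhile $A$ also sits inside the coordinate projection of a finitely generated subgroup $\Gamma=E(\mathbb{Q})$ of rank $r$. The plan is therefore to prove a uniform incidence bound: a set with small sumset (or product set) cannot have many points on a curve's projection of a finite-rank group, with the bound exponential in the rank.

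\textbf{Setting up.} Fix $A\subseteq X$ (the case $A\subseteq Y$ is analogous). Pick $P_a\in E(\mathbb{Q})$ with $x(P_a)=a$, so we have $|A|$ points in $\Gamma\subset E(\overline{\mathbb{Q}})$. For an AP with common difference $d$ we get many solutions to
\[ x(P)+x(Q)=x(R)+x(S), \qquad x(P)-x(Q)=d, \]
with $P,Q,R,S\in\Gamma$. Indeed, by Cauchy--Schwarz, $A$ has additive energy $\gg |A|^3$. For a GP the same holds with $x(P)/x(Q)=\lambda$ fixed. For squares we work on the curve $E'$ obtained by adjoining $t$ with $t^2=x$: this is a cover of $E$ of genus $\le 3$ (or rather the fibre product, an abelian-variety-valued situation). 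Alternatively, note that $(u+jd)^2$, $(u+(j+1)d)^2$, $(u+(j+2)d)^2$ satisfy the linear second difference $a_j-2a_{j+1}+a_{j+2}=2d^2$. So three consecutive terms lie on a fixed affine plane in $x$-coordinates.

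\textbf{Main step.} In every case we get $\gg|A|$ points $(P_1,P_2,P_3)$ (or pairs) of $\Gamma^k$, $k\le 3$, lying on a fixed linear subvariety of $\mathbb{A}^k$ pulled back to the abelian variety $E^k$. Call this $V=\{(P_i): \sum c_i x(P_i)=c_0\}$. This is a hypersurface in $E^k$, and the points lie in $\Gamma^k$, a group of rank $kr$. By the quantitative Mordell--Lang theorem of David--Philippon/Rémond (the Diophantine input cited in the abstract), $V(\overline{\mathbb{Q}})\cap\Gamma^k$ is contained in at most $C^{1+r}$ translates of abelian subvarieties contained in $V$. I expect the constants to be uniform, with degree of $V$ bounded absolutely.

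It remains to show that positive-dimensional cosets in $V$ contribute few of our points. Abelian subvarieties of $E^k$ are kernels of integer matrices. A coset $\{(P_1,P_2,P_3)\}$ of dimension $\ge1$ on which $c_1x(P_1)+c_2x(P_2)+c_3x(P_3)$ is constant forces a nontrivial relation among the functions $x\circ[m_i]$ on a curve. Comparing poles (at the torsion points where some $[m_i]P=O$) rules this out unless the coefficients cancel in a way incompatible with $d\ne0$ (respectively $\lambda\ne1$). For instance, $x(P)-x(Q)=d$ on a coset $Q=[m]P+T$ would require $x-x\circ[m]\circ\tau$ to be constant, which is impossible by pole order unless $m=\pm1$ and $T=O$, giving $d=0$.

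\textbf{Conclusion and obstacle.} Hence all but $C^{1+r}$ of the relevant tuples are isolated points, and since each element of $A$ yields a distinct tuple of consecutive terms, $|A|\le C^{1+r}+O(1)$. The main obstacle is this degenerate-coset analysis, particularly for the squares case with three-term relations. There, one-dimensional cosets inside $E^3$ require checking that the pole divisors of $x\circ\phi_i$ cannot cancel. I would handle it by restricting to a generic curve in the coset and comparing leading pole orders $m_i^2$ at the common pole.
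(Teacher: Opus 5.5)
\textbf{The gap: the squares case.} The step that fails is the coset exclusion for the second-difference surface $V=\{x(P_1)-2x(P_2)+x(P_3)=2d^2\}\subseteq E^3$. You claim that comparing poles shows $V$ contains no positive-dimensional coset when $d\neq 0$, and that the case $A\subseteq Y$ is analogous. This is false.

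Since $y(-t)=-y(t)$ on every Weierstrass curve, the surface $\{y(P_1)-2y(P_2)+y(P_3)=2d^2\}$ contains the translate $\{(-t,R,t): t\in E\}$ of an elliptic curve whenever $y(R)=-d^2$. If $R\in E(\mathbb{Q})$ and $r\geq 1$, this coset meets $\Gamma^3$ in infinitely many points. Similarly, for $b=0$ one has $x([i]t)=-x(t)$, so $V$ contains $\{([i]t,R,t)\}$ with $x(R)=-d^2$. So your conclusion that all but $C^{1+r}$ of the relevant triples are isolated points of $V\cap\Gamma^3$ does not follow, and an extra idea is needed.

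One repair: show, by comparing pole sets and leading Laurent coefficients, that every positive-dimensional coset in $V$ has constant middle coordinate. Such a coset then contains at most two triples $(P_j,P_{j+1},P_{j+2})$.

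A cleaner repair uses the square-root cover you mention and then abandon. Consecutive terms satisfy $(x(P_{j+1})-x(P_j)-d^2)^2=4d^2x(P_j)$, which defines a curve in $E^2$. This curve contains no translate of an elliptic curve when $d\neq0$. Indeed, wherever $x\circ\phi_1$ or $x\circ\phi_2$ has a pole, their difference has a double pole or none, because $\wp$ has no $z^{-1}$ term. So the left side has a pole of order $4$ or $0$ there, while the right side has a pole of order exactly $2$ at the poles of $x\circ\phi_1$ and nowhere else. The same works for $y$, with orders $6$ or $0$ against $3$. After this, the argument is identical to the one for arithmetic progressions, since $j\mapsto(P_j,P_{j+1})$ is injective when $d\neq0$.

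\textbf{Arithmetic and geometric progressions, and comparison with the paper.} For these two cases your route works, but the coset analysis must be stated correctly.
\begin{itemize}
\item It must cover all cosets $\{(\alpha t+R,\beta t+S)\}$ with $\alpha,\beta\in\mathrm{Hom}(B,E)$, including CM and vertical ones, not only graphs $Q=[m]P+T$.
\item The relevant invariants are the pole sets, of size $\deg\phi$ (every pole of $x\circ\phi$ is double, since isogenies are unramified), and the leading coefficients $\alpha'^{-2}$. They are not ``pole orders $m^2$''.
\item For $x(Q)=\lambda x(P)$ this comparison gives $|\lambda|=1$, so you must exclude $\lambda=\pm1$, not just $\lambda=1$. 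For example, on $y^2=x^3+ax$ the curve $x(Q)=-x(P)$ contains $\{(t,[i]t)\}$. This is harmless, since ratio $-1$ gives $|A|\le 2$.
\end{itemize}

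Once repaired, your argument is genuinely different from the paper's and more elementary. The paper uses only that the progression has doubling at most $2$; for squares it first transports the set through the correspondence $x=z^2$ to an arithmetic progression in $\G_a$. It then invokes Corollary~\ref{corBremner}, i.e.\ the expansion bound of Theorem~\ref{rankversion}. That bound rests on the non-degeneracy of $\Vsum$ (Proposition~\ref{nondegenerate}), fibrewise uniform Mordell--Lang, and the Schwartz--Zippel bound of Lemma~\ref{lemmaexception}.

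Your version instead exploits the rigid one-step relation between consecutive terms, so one application of uniform Mordell--Lang to a single fixed curve in $E^2$ suffices. The price is that you must do the coset exclusion by hand for each pattern. Your method also does not extend, as the paper's does, to generalised progressions of arbitrary rank or to arbitrary correspondences.
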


We note that the constant $C$ in Theorem \ref{thmBremner} does not depend on $a,b$, and for families of elliptic curves of bounded rank we obtain a uniform bound. It is to this day unclear whether there exist elliptic curves of arbitrary large rank. Recently, an elliptic curve of rank at least 29 was discovered \cite{rank_29_elliptic_curve, rank_records}. It is straightforward to see that Siegel's theorem for $S$-integral points \cite[Theorem D.9.1]{HindrySilverman} on elliptic curves implies that there can be no infinite arithmetic or geometric sequence or sequence of squares in $E(\mathbb{Q})$. However, it does not provide a uniform bound, even for a fixed elliptic curve, since the number of primes that need inverting depends on the particular sequence. 

 For arithmetic progressions, Garcia--Fritz and Pasten \cite{GFP2021} prove a bound of the form $C^{1 + r}$ with $C$ depending on $E$ but it is conceivable that their methods could lead to a uniform bound, had they used a uniform version of Mordell--Lang, e.g., \cite{DavidPhilippon}. However, for geometric progressions and consecutive squares no such general bounds seem to be known, even when allowing a dependence on $E$. We prove a more general version of Theorem \ref{thmBremner} (Corollary \ref{corBremner}) for correspondences and finite rank groups that applies to a plethora of similar sequences. Theorem \ref{thmBremner} can also be formulated in terms of rational points on a surface of general type in a high dimensional projective space. As a consequence we determine the Zariski closure of the rational points of certain projective surfaces (see Theorem \ref{BombieriLang}).

We now turn towards the sum-product phenomenon. Thus, 
given finite sets $A,B \subseteq \mathbb{C}$, we define the sumset and the product set as
\[ A+B = \{ a+ b : a \in A, b \in B\} \ \ \text{and} \ \ A\cdot B = \{ab : a \in A, b \in B\}.   \]
It is expected that if $|A+A|$ is small in terms of $|A|$, then $A$ should be additively structured, and if $|A \cdot A|$ is small in terms of $|A|$, then $A$ should be multiplicatively structured. Speculating that these two types of structures should not coexist simultaneously, Erd\H{o}s--Szemer\'{e}di \cite{Erdos--Szemeredi_on_sums_of_and_products_of_integers} conjectured that for any finite set $A \subseteq \mathbb{C}$, either $A+A$ or $A \cdot A$ must have size close to $|A|^2$. More generally, writing 
\[ gA = \{a_1 + \dots + a_g : a_1, \dots, a_g \in A\} \ \ \text{and} \ \ A^{(g)} = \{ a_1 \dots a_g : a_1, \dots, a_g \in A\}, \]
for any $g \in \mathbb{N}$, Erd\H{o}s--Szemer\'{e}di conjectured the following. 

\begin{conjecture} \label{erdszem}
    For any $g \in \mathbb{N}$, any $\varepsilon>0$ and any set $A \subseteq \mathbb{C}$, one should have
    \[ \max\{ |gA|, |A^{(g)}| \} \gg_{g, \varepsilon} |A|^{g - \varepsilon}. \]
\end{conjecture}

A significant body of work addresses this conjecture, mostly focusing on the case when $g=2$. Some of the highlights in this setting have been the works of Elekes \cite{El1997} and Solymosi \cite{So2009}, who used geometric insights to provide short proofs of strong sum-product estimates. The current best known result here arises from some very recent work of Cushman \cite{Cu2025}, who employed incidence geometric and additive combinatorial methods to prove that
\[ \max\{|2A|, |A^{(2)}| \} \gg_{\varepsilon} |A|^{\frac{4}{3} + \frac{10}{4407} - \varepsilon } \]
for all sets $A \subseteq \mathbb{R}$ and all $\varepsilon>0$.

One can interpret the sum-product phenomenon as a disruption of structure between two non-isogenous algebraic groups. Indeed, one can set $\mathbb{G}_m = (\mathbb{C}^\ast, \times)$ and $\mathbb{G}_a = (\mathbb{C}, +)$ and take $\mathcal{C}$ to be the \emph{correspondence} whose complex points are of the form $(x, x)$ for $x \in \mathbb{G}_m(\mathbb{C})$. See \S3 for a brief introduction about algebraic groups and correspondences between them. We let $\pi_1 : \mathcal{C} \to \G_m$ and $\pi_2:\mathcal{C}\to \G_a$ be the standard projection maps. Given a finite set $A \subseteq \G_m(\mathbb{C})$, we write
\[\mathcal{C}(A) = \bigcup_{x \in A} \pi_2(\pi_1^{-1}(x)) .\]
Note that $\mathcal{C}(A)$ is a subset of $\G_a(\mathbb{C})$. The sum-product phenomenon is now equivalent to saying that for any finite $A \subseteq \mathbb{G}_m(\mathbb{C})$, either $|A+A|$ or $|\mathcal{C}(A) +\mathcal{C}(A)|$ must be much larger than $|A|$.

In a very nice paper,  Bays--Breuillard \cite{BB2021} employed a model theoretic approach to generalise this circle of ideas to a much more broader family of algebraic groups. In particular, given $1$-dimensional, connected, non-isogenous algebraic groups $G$ and $H$ over $\C$ and given some algebraic correspondence $\mathcal{C}$ between $G$ and $H$ of degree $d$, Bays--Breuillard\footnote{Bays--Breuillard  \cite{BB2021} actually proved that for non-constant rational maps $f_1 : G \to \mathbb{C}$ and $f_2 : H \to \mathbb{C}$, and all finite sets $A \subseteq \C$, one either has $|f_1^{-1}(A) + f_1^{-1}(A)| \geq c |A|^{1 + \delta}$ or $|f_2^{-1}(A) + f_2^{-1}(A)| \geq c|A|^{1 + \delta}$, where $c,\delta>0$  are constants depending on $G,H, f_1$ and $f_2$. This can be written in the framework of correspondences by considering the correspondence given by an irreducible component of $\{(x,y) \in G \times H : f_1(x) = f_2(y)\}$.} proved that there exists some $\delta = \delta(G,H,\mathcal{C}) >0 $ such that for any finite set $A \subseteq G$, one has
\begin{equation} \label{bb19}
\max\{ |A+A| , |\mathcal{C}(A) +\mathcal{C}(A)| \} \gg_{G,H,C} |A|^{1 + \delta}. 
\end{equation}

In contrast to the $g=2$ setting of Conjecture \ref{erdszem}, much less is known about the case when one requires \emph{unbounded expansion}, that is, given any real number $k >1$, one wishes to find some $1 \leq g \ll_k 1$ such that
\begin{equation} \label{unbexp}
\max\{ |gA|, |A^{(g)}| \} \gg_k |A|^k 
\end{equation}
holds for all finite sets $A \subseteq \mathbb{C}$. While incidence geometric methods seem to work quite effectively when $k \leq 2$, they do not seem to give any results when $k > 2$. In a breakthrough paper, Bourgain--Chang \cite{BC2004} employed intricate harmonic analytic techniques along with a clever arithmetic lemma to prove that \eqref{unbexp} holds for all finite sets $A \subseteq \mathbb{Q}$. Their work was subsequently simplified and quantitatively improved by P\'{a}lv\"{o}lgyi--Zhelezov \cite{PZ2021}, giving the best known bounds for $g$ in terms of $k$ for this problem. These ideas have since been extended by replacing the sumset $gA$ by other measures of additive structure. In particular, Hanson--Roche-Newton--Zhelezov \cite{HRNZ2020} proved an analogue of \eqref{unbexp} with the sumset $gA$ replaced by the shifted product set $(A+1)^{(g)}$. This has been further generalised by the second author \cite{Mu2024b}, who proved analogues of \eqref{unbexp} with $gA$ first replaced by the sumset $\varphi_1(A) + \dots + \varphi_g(A)$ and then by the product set $\varphi_1(A) \dots \varphi_g(A)$, for suitably chosen polynomials $\varphi_1, \dots, \varphi_g \in \mathbb{Z}[x]$ with bounded degree. 

All the aforementioned results in \cite{BC2004, PZ2021, HRNZ2020, Mu2024b} crucially employed properties about prime factorisation of integers, and subsequently do not generalise to sets of real numbers. In fact, an important problem in this area was to prove \eqref{unbexp} for sets $A \subseteq \mathbb{R}$.  Building on earlier work of Chang \cite{Ch2009}, the second author \cite{Mu2024a} utilised results from diophantine geometry to prove this conditionally on an infamous conjecture in additive combinatorics known as the weak polynomial Freiman--Ruzsa conjecture over $\mathbb{Z}$. The latter has now been resolved in the spectacular work of Gowers--Green--Manners--Tao \cite{GGMT2025}.

Our main result on the generalised sum-product phenomenon is a Bourgain--Chang type unbounded expansion result in the vastly broader setting of 1-dimensional, connected algebraic groups over $\mathbb{C}$.

\begin{thm}\label{thmdelta}  Given integers $k \geq 1$ and $d \geq 2$, there exists an integer $g \geq 1$ such that the following holds. Let $G$ and $H$ be algebraic groups of dimension $1$, and let $\mathcal{C}_1, \cdots, \mathcal{C}_g$ be correspondences  of degree $d$ between $G$ and $H$. Suppose no $\mathcal{C}_i$ is a translate of an algebraic subgroup, and suppose that $G$ is not isomorphic to $\mathbb{G}_a$.  Then for all finite, non-empty sets $A \subseteq G$, one has
$$\max\{|gA|, |\mathcal{C}_1(A) + \dots + \mathcal{C}_g(A)|\} \gg_{d,k}|A|^{ k}. $$
\end{thm}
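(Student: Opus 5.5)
We follow the Bourgain--Chang strategy as implemented by Chang and the second author \cite{Mu2024a}, with the prime-factorisation input replaced by Diophantine geometry (uniform Mordell--Lang / subspace-type results). The structure of the argument is as follows.

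\textbf{Setup.} Fix $k,d$ and choose $g$ large in terms of $k,d$ only. Suppose $|gA|\le |A|^{k}$; we must show $|\mathcal{C}_1(A)+\dots+\mathcal{C}_g(A)|\gg |A|^k$. Since $G\not\cong\mathbb{G}_a$, $G$ is $\mathbb{G}_m$, an elliptic curve, or a nontrivial extension. In each case small iterated sumsets in $G$ force structure in a finite-rank subgroup.

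\textbf{Step 1: a large structured piece of $A$.} Small $gA$ with $g$ large gives, by pigeonholing over $A,2A,4A,\dots$, some scale $2^jA$ with doubling $|A|^{O(k/\log g)}$. We use the weak polynomial Freiman--Ruzsa theorem of Gowers--Green--Manners--Tao, applied after a Freiman isomorphism into $\mathbb{Z}^D$ (legitimate because $G(\mathbb{C})$ is, modulo torsion, a torsion-free abelian group). This produces a subset $A'\subseteq A$ with $|A'|\ge |A|^{1-\eta}$, contained in a subgroup $\Gamma\le G$ of rank $r\ll_{k}\log|A|/\log g$ or, more usefully, in few cosets of a group of rank $O_\eta(1)$. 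The key quantitative point is that $\eta\to 0$ as $g\to\infty$, and the rank bound depends only on $\eta$.

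\textbf{Step 2: additive energy bound for images of finite-rank sets.} This is the Diophantine heart. We show that for $B\subseteq\Gamma$ with $\operatorname{rk}\Gamma\le r$, the multiset $\mathcal{C}_i(B)$ has few nontrivial solutions to $h_1+\dots+h_s=h_{s+1}+\dots+h_{2s}$ in $H$, namely $\ll_{d,r,s}|B|^{s+o(1)}$. Each such solution is a point of $\Gamma^{2s}$ on the subvariety $V=\{(x_j): \sum\pm y_j=0,\ (x_j,y_j)\in\mathcal{C}_{i_j}\}$ of $G^{2s}$. By uniform Mordell--Lang bounds (David--Philippon for abelian varieties; Laurent and Evertse--Schmidt--Schlickewei for tori and semiabelian/mixed cases), $V(\mathbb{C})\cap\Gamma^{2s}$ lies in $\le c(d,s)^{1+r}$ cosets of algebraic subgroups contained in $V$. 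The hypothesis that no $\mathcal{C}_i$ is a translate of an algebraic subgroup, together with $G\not\cong\mathbb{G}_a$ (non-isogeny-type incompatibility), is what forces these special cosets to be degenerate: they correspond to trivial solutions (subsums vanishing pairwise) up to bounded multiplicity. An induction on $s$ over the subsum-vanishing pattern then gives the energy bound. This is the step I expect to be the main obstacle, specifically the classification of the positive-dimensional cosets inside $V$ when $G$ and $H$ may be isogenous or both non-linear. That is exactly where the "not a translate" hypothesis must be used carefully.

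\textbf{Step 3: conclusion.} By Cauchy--Schwarz/Hölder, the energy bound from Step 2 applied to $B=A'$ (or to its intersections with the few cosets) gives
\[ |\mathcal{C}_1(A')+\dots+\mathcal{C}_s(A')|\gg |A'|^{s-o(1)}\cdot c^{-(1+r)} .\]
Choose $s$ with $s(1-\eta)>k+1$, and use $g\ge s$ with the remaining summands contributing only translates, since $\mathcal{C}_j(A)$ is non-empty. Since $r$ is bounded in terms of $\eta$, and hence of $k$, the constant $c^{1+r}$ depends only on $d,k$, and we obtain $|\mathcal{C}_1(A)+\dots+\mathcal{C}_g(A)|\gg_{d,k}|A|^k$. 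If instead Step 1 only yields rank $\asymp\log|A|/\log g$, we run the Chang-type iteration: $c^{r}=|A|^{O(1/\log g)}$ is an acceptable loss once $g$ is large.
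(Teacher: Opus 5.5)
\textbf{Verdict: Steps 1 and 3 match the paper, but Step 2, which carries all of the finite-rank expansion bound, is not proved.}

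\textbf{Steps 1 and 3.} These follow the paper. Dyadic pigeonholing plus Gowers--Green--Manners--Tao after a Freiman isomorphism into $\mathbb{Z}^D$ is Lemma \ref{wpfrlemma2}. It gives $A'\subseteq A$ with $|A'|\gg|A|^{1-O(k/\log g)}$ inside a subgroup of rank $r\ll k\log|A|/\log g$. The loss $c^{1+r}=|A|^{-O(k\log(1/c)/\log g)}$ from a finite-rank bound (Theorem \ref{rankversion} with $2k$ summands) is then absorbed by taking $g$ large. Three corrections here:
\begin{itemize}
\item Your ``more useful'' option of few cosets of a group of rank $O_\eta(1)$ is not available. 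Weak PFR gives rank $O(\log K)$, and boxes in $\mathbb{Z}^m$ show this is necessary. So the sentence ``$r$ is bounded in terms of $\eta$'' in Step 3 is false, and only your fallback works.
\item That fallback needs no Chang-type iteration; the direct absorption above suffices.
\item $s$ must be fixed first (say $s=2k$), and only then $g$ chosen large in terms of $c(d,s)$; otherwise the choice is circular.
\end{itemize}

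\textbf{The gap: Step 2.} Uniform Mordell--Lang (Theorem \ref{ML}) covers $V\cap\Gamma^{2s}$ by at most $C^{1+2sr}$ cosets $\gamma_i+H_i\subseteq V$. Together with Lemma \ref{lemmaexception}, this gives $E\ll C^{1+2sr}|B|^s$ \emph{provided} every coset of a connected algebraic subgroup contained in $V$ has dimension at most $s$. Mordell--Lang says nothing about these dimensions beyond $\dim V=2s-1$, and your proposed mechanism does not supply the bound:
\begin{itemize}
\item The vanishing-subsum structure in Laurent and Evertse--Schlickewei--Schmidt concerns monomials of the equations defining $V$ in $\mathbb{G}_m^{2s}$. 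It is not about subsums of the $h_j$ in $H$.
\item For elliptic $G$ (David--Philippon) there is no such description of the cosets at all.
\item The ``induction on $s$ over subsum patterns'' is never specified.
\item The hypothesis $G\not\cong\mathbb{G}_a$ is what makes Mordell--Lang available. It is not what constrains the cosets.
\end{itemize}
You flag this yourself as the main obstacle. It is exactly where the hypothesis that no $\mathcal{C}_i$ is a coset must be used.

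\textbf{How the paper handles it.} Proposition \ref{nondegenerate} shows that $\mathcal{V}_{\rm sum}$ is non-degenerate, via a tangent-space argument using Lemma \ref{lemfunctions} and Lemma \ref{lemtangent}. This feeds into Theorem \ref{projections}, which counts points fibre by fibre of $\mathcal{V}_{\rm sum}$ after discarding the exceptional set of Lemma \ref{cosets}.

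\textbf{How your energy route could be completed.} A similar argument would work. Lift a coset in $V$ locally to an affine subspace of $\mathbb{C}^{2s}$ on which $\sum_i f_i(z_i)-\sum_i\tilde f_i(z_i')$ is constant, where $f_i,\tilde f_i$ are local lifts $\log_H\circ(\text{branch of }\mathcal{C}_i)\circ\exp_G$. That subspace is isotropic for the diagonal form $\mathrm{diag}(f_i''(z_i),-\tilde f_i''(z_i'))$. This form is nondegenerate on the non-constant coordinates, because no $f_i''$ or $\tilde f_i''$ vanishes identically by Lemma \ref{lemtangent}. Hence the subspace has dimension at most $s$. As written, though, this step is absent, so the proposal does not yet prove the theorem.
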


Setting $G = \mathbb{G}_m$ and $H = \mathbb{G}_a$, we let $\mathcal{C}_i$ be given by the graph of the inclusion $\mathbb{C}^\ast \hookrightarrow \mathbb{C}$. The conclusion of Theorem \ref{thmdelta} then immediately implies \eqref{unbexp} for arbitrary finite sets $A \subseteq \mathbb{C}$. Similarly setting $G = \mathbb{G}_m$ and $H = \mathbb{G}_m$ or $\mathbb{G}_a$ and setting $\mathcal{C}_i$ to be given by the graph of $(x, \varphi_i(x))$ in $G \times H$ for suitably chosen polynomials $\varphi_1, \dots, \varphi_g$ delivers the following corollary. 
\begin{corollary} \label{shiftedbc}
    For all integers $k \geq 2$, there exists an integer $g \geq 2$ such that the following holds. For any non-constant $\varphi_1, \dots, \varphi_g \in \mathbb{C}[x]$ with degree at most $d$ and any finite set $A \subseteq \mathbb{C}$, one has
    \[ \max\{ |A^{(g)}| , |\varphi_1(A) + \dots + \varphi_g(A)| \} \gg_{d,k} |A|^k. \]
    Moreover, if each $\varphi_i(x)$ for $1 \leq i \leq g$ is not of the form $a x^n$ for any $a \in \mathbb{C}$ and any $n \in \mathbb{N}$, then we also have
    \[  \max\{ |A^{(g)}| , |\varphi_1(A) \dots  \varphi_g(A)| \} \gg_{d,k} |A|^k. \]
\end{corollary}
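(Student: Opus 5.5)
Corollary \ref{shiftedbc} is a direct specialisation of Theorem \ref{thmdelta}. In both parts we take $G = \mathbb{G}_m$, which is not isomorphic to $\mathbb{G}_a$, and we read the group law on $G$ multiplicatively, so that $gA$ in Theorem \ref{thmdelta} becomes $A^{(g)}$.

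First, $0$ lies outside $\mathbb{G}_m$, so we reduce to $A \subseteq \mathbb{C}^\ast$. Removing $0$ from $A$ changes $|A|$ by at most one, and the sets in question can only shrink under this removal. Hence a bound of the form $\gg |A\setminus\{0\}|^k$ suffices once $|A| \geq 2$; the case $|A|\le 1$ is trivial after adjusting the implied constant.

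For the first assertion take $H = \mathbb{G}_a$ and let $\mathcal{C}_i$ be the graph $\{(x,\varphi_i(x)) : x \in \mathbb{C}^\ast\}$, an irreducible curve in $G\times H$. Its projection to $G$ is an isomorphism, and each fibre of the projection to $H$ has at most $\deg\varphi_i \le d$ points. So $\mathcal{C}_i$ is a correspondence of degree at most $d$. If the statement of Theorem \ref{thmdelta} requires degree exactly $d$, we apply it for each of the finitely many possible degree profiles and take the largest resulting $g$. It remains to check that no $\mathcal{C}_i$ is a translate of an algebraic subgroup. The connected one-dimensional algebraic subgroups of $\mathbb{G}_m\times\mathbb{G}_a$ are $\mathbb{G}_m\times\{0\}$ and $\{1\}\times\mathbb{G}_a$, since there are no nontrivial homomorphisms between $\mathbb{G}_m$ and $\mathbb{G}_a$. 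A graph that is a translate of one of these would be the graph of a constant function, which is excluded because $\varphi_i$ is non-constant. Since $\mathcal{C}_i(A)=\varphi_i(A)$, Theorem \ref{thmdelta} yields the first bound.

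For the second assertion take $H = \mathbb{G}_m$, so the sum in $H$ is written as a product. One subtlety is that $\varphi_i$ may vanish at points of $A$, so it does not map $\mathbb{C}^\ast$ to $\mathbb{C}^\ast$. We therefore take $\mathcal{C}_i$ to be the Zariski closure in $\mathbb{G}_m\times\mathbb{G}_m$ of the graph over $\{x\neq 0 : \varphi_i(x)\neq 0\}$. Then $\mathcal{C}_i(A)=\varphi_i(A')$, where $A'$ removes at most $d$ roots of $\varphi_i$. It is cleaner to discard from $A$ the at most $gd$ roots of all the $\varphi_i$ at the outset, which costs only a constant factor when $|A| \gg_{d,g} 1$. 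We then apply the theorem to the resulting set. The resulting product $\varphi_1(A')\cdots\varphi_g(A')$ is a subset of $\varphi_1(A)\cdots\varphi_g(A)$.

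The remaining check is that $\mathcal{C}_i$ is not a translate of an algebraic subgroup. The connected one-dimensional subgroups of $\mathbb{G}_m^2$ are the curves $x^a y^b = 1$ with $\gcd(a,b)=1$. A translate of one is given by $x^a y^b = c$. If the graph of $\varphi_i$ lay on such a curve with $b\neq 0$, then $\varphi_i^b = c x^{-a}$ as rational functions. Since $\varphi_i$ is a polynomial, comparing zeros and poles forces $\varphi_i = \alpha x^n$, which is excluded by hypothesis. If $b=0$, then $x^a=c$ is a vertical line, which is not the graph of a function. The degree bound holds exactly as before, and Theorem \ref{thmdelta} gives the second bound.

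The only real obstacle is this non-degeneracy verification, which is precisely where the hypothesis $\varphi_i\neq a x^n$ is used, together with the bookkeeping of removing finitely many bad points.
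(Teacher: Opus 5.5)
Your argument is the paper's own: specialise Theorem~\ref{thmdelta} to $G=\mathbb{G}_m$ with $H=\mathbb{G}_a$ or $H=\mathbb{G}_m$ and $\mathcal{C}_i$ the graph of $\varphi_i$. Your check that these graphs are not coset translates (using $\varphi_i\neq ax^n$ in the multiplicative case), together with discarding $0$ and the roots of the $\varphi_i$, fills in details the paper leaves implicit.

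Two cosmetic corrections. First, in the paper's normalisation the graph of $\varphi_i$ has degree $\deg\varphi_i+1\le d+1$, not $\le d$. Second, mixed degrees are not handled by ``taking the largest $g$''. Instead, either read Theorem~\ref{thmdelta} as a statement about correspondences of degree at most $d$, which its proof gives. Or pigeonhole a subfamily of equal degree and use $|A^{(g)}|\ge|A^{(g')}|$ for $g\ge g'$ and $A\subseteq\mathbb{C}^\ast$, together with the fact that adding further non-empty summands cannot shrink a sumset.
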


This recovers the results of \cite{HRNZ2020, Mu2024b} qualitatively in the much more general setting where $A \subseteq \mathbb{C}$ instead of $A \subseteq \mathbb{Q}$ and the polynomials $\varphi_1, \dots, \varphi_g$ are allowed to have complex coefficients instead of rational coefficients. We defer further applications of our methods, including an alternative proof of a conjecture of Bays--Breuillard, to \S \ref{furtherapp}. 

We also consider problems concerning expansion in the image set of polynomials as well as intersection of varieties with discrete boxes in algebraic groups. This relates to Elekes--Szab\'{o} and Elekes--R\'{o}nyai type problems. We briefly describe the former, and so, given  $n \geq 3$ and some polynomial $P \in \mathbb{C}[x_1, \dots, x_n]$, when is it the case that for any finite, non-empty set $A \subseteq \mathbb{C}$, one has
\begin{equation}  \label{powsave}
|\{(a_1, \dots, a_n) \in A^n : P(a_1, \dots, a_n) = 0 \}| \ll |A|^{n-1 - \eta}
\end{equation}
for some constant $\eta>0$? Such a characterisation was first studied  by  Elekes--Szab\'{o} \cite{ES2012}, and since then, has seen a flurry of activity, in part due to its connections to a variety of combinatorial geometric problems \cite{RSS2016, RSdZ2016, SZ2024} as well as to model theoretic results \cite{CS2021, BB2021, CPS2024}.

In their aforementioned work, Bays--Breuillard \cite{BB2021} introduced a model theoretic approach to this question, thus generalising the above results for irreducible algebraic sets in $\mathbb{C}^n$. They further noted that upon restricting to a special family of sets $A \subseteq \mathbb{C}$, one can obtain power saving of the shape \eqref{powsave} for a much broader collection of varieties. In particular, in the setting of a $1$-dimensional, complex, connected algebraic group $G$, they proved that for any subvariety $\mathcal{V}\subseteq G^n$ which is not a coset of a subgroup, there exist constants $\varepsilon, \eta>0$ depending only on $G$ and $\mathcal{V}$ such that for any finite set $A  \subseteq G$ satisfying $|A+A| \leq |A|^{1 + \varepsilon}$, one has 
\begin{equation} \label{bsp}
|A^n \cap \mathcal{V}| \ll_{G,\mathcal{V}} |A|^{\dim(\mathcal{V}) - \eta}. 
\end{equation}

It is natural to ask what is the best possible value of $\eta$ that is admissible in \eqref{bsp}. When $G$ is not isomorphic to $\mathbb{G}_a$, we resolve this question.

\begin{theorem} \label{esz}
Let $G$ be a connected algebraic group over $\mathbb{C}$ of dimension $1$. Suppose that $G$ is not isomorphic to $\mathbb{G}_a$. Let $A \subseteq G$ be a finite set such that $|A+ A| \leq K|A|$ for some $K \geq 1$. Then for any irreducible subvariety $\mathcal{V}\subseteq G^g$, that is not a translate of an algebraic subgroup of $G^g$, one has
   \begin{align*}
       |\mathcal{V}\cap A^g| \ll_{g, \deg(\mathcal{V})} K^{C } + |A|^{\dim(\mathcal{V}) - 1},
   \end{align*} 
   where $C>0$ is some constant depending only on $\deg(\mathcal{V})$ and $g$. 
\end{theorem}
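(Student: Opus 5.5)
We combine a Freiman-type structure theorem for $A$ with a uniform Mordell--Lang bound, by induction on $\dim(\mathcal{V})$. First we use $|A+A|\le K|A|$ to place most of $A$ inside a subgroup of small rank. Since $G\not\cong\mathbb{G}_a$, $G$ is $\mathbb{G}_m$ or an elliptic curve $E$ over $\mathbb{C}$. In both cases the torsion of $G(\mathbb{C})$ is $(\mathbb{Q}/\mathbb{Z})^{\dim}$-like, and every finitely generated subgroup is $\mathbb{Z}^r\times(\text{finite})$.

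\textbf{Covering $A$ by a low-rank group.} Apply the weak polynomial Freiman--Ruzsa theorem over torsion-free groups (Gowers--Green--Manners--Tao) to the finitely generated group $\langle A\rangle$, after dealing with torsion by a Green--Ruzsa type reduction. The output is a subgroup $\Gamma\le G(\mathbb{C})$ of rank $r\ll \log K$ (we need it $\ll \log(2K)$, and this is exactly what the weak PFR gives) together with $K^{O(1)}$ cosets $x_j+\Gamma$ covering $A$. Replacing $\Gamma$ by $\Gamma'=\Gamma+\langle x_j\rangle$ gives a group of finite rank $r'\ll\log K+K^{O(1)}$, which is too large. So instead we keep cosets: $A^g$ is covered by $K^{O(g)}$ translates $x+\Gamma^g$. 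This is where the $K^{C}$ will come from. One then needs to count $|\mathcal{V}\cap(x+\Gamma^g)|$. Since $x+\Gamma^g\subseteq$ the division hull of $\langle x,\Gamma^g\rangle$, which has rank $\le g(r+1)$, the issue is a Mordell--Lang count in a group of rank $O(g\log K)$.

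\textbf{Uniform Mordell--Lang.} By Laurent ($\mathbb{G}_m$, via Evertse--Schlickewei--Schmidt) and David--Philippon (elliptic curves), $\mathcal{V}\cap\Lambda$ for $\Lambda$ of finite rank $\rho$ is a union of at most $c(g,\deg\mathcal{V})^{1+\rho}$ cosets $y+B\cap\Lambda$, with $y+B\subseteq\mathcal{V}$ and $B$ an algebraic subgroup. With $\rho\ll g\log K$ this gives $K^{O(1)}$ cosets per translate, so $K^{C}$ cosets in total. Since $\mathcal{V}$ is irreducible and not a translate of a subgroup, each such $y+B$ is a proper coset inside $\mathcal{V}$, so $\dim B\le\dim\mathcal{V}-1$.

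\textbf{Counting points on cosets.} It remains to bound $|(y+B)\cap A^g|\le|A|^{\dim B}$. The connected component $B^0$ is, up to isogeny, a product-type torus $G^{\dim B}$ embedded in $G^g$. There is a coordinate projection $G^g\to G^{\dim B}$ that is finite-to-one on $B$, of degree bounded by $\deg B\ll_{g,\deg\mathcal V}1$. Also the number of components of $B$ is bounded, since Laurent/David--Philippon give cosets of bounded degree. Hence each coset contributes $\ll|A|^{\dim B}\le|A|^{\dim\mathcal{V}-1}$.

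This naively gives $K^{C}|A|^{\dim\mathcal V-1}$ rather than the sum $K^{C}+|A|^{\dim\mathcal V-1}$. To get the sum, split the cosets by dimension. Zero-dimensional ones contribute $\le K^{C}$ points. For positive-dimensional cosets, apply induction on $\dim\mathcal{V}$ to the maximal cosets. Equivalently, project: a positive-dimensional coset $y+B$ of dimension $\ge1$ lies in a fibre structure, and the union of all such cosets is contained in the \emph{special locus} (Ueno/Kawamata locus) $Z(\mathcal{V})$. This is a proper closed subvariety of bounded degree and is independent of $K$. Its points in $A^g$ are $\ll|A|^{\dim\mathcal V-1}$ by the trivial Schwartz--Zippel-type bound $|W\cap A^g|\ll_{\deg W}|A|^{\dim W}$. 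Points outside $Z(\mathcal{V})$ lie in zero-dimensional cosets, and there are $K^{C}$ of those.

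\textbf{Main obstacle.} The main obstacle is the first step: obtaining a rank bound $O(\log K)$ with only $K^{O(1)}$ translates, in groups containing divisible torsion. Here I would use the GGMT weak PFR applied to the free part after quotienting torsion, citing its statement over $\mathbb{Z}^D$ via Freiman isomorphism.
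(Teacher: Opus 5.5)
Up to the bound $K^{C}|A|^{\dim\mathcal{V}-1}$, your argument coincides with the paper's. The paper deduces Theorem \ref{esz} from Theorem \ref{elsz} via $\mathrm{codef}(\mathcal{V})\le\dim\mathcal{V}-1$. Its proof of Theorem \ref{elsz} is exactly your chain: Lemma \ref{coveringlemma}, then Theorem \ref{ML} applied to $(\mathcal{V}-x)\cap\Gamma_A$ for each of the $K^{O(g)}$ translates $x$, then Lemma \ref{lemmaexception} on each of the resulting $K^{O(1)}$ cosets. As you rightly notice, this gives a product $K^{O(1)}|A|^{\dim\mathcal{V}-1}$, and the paper's proof contains no further step turning it into a sum.

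Your upgrade is where the proposal breaks. The union $Z(\mathcal{V})$ of all positive-dimensional cosets contained in $\mathcal{V}$ is Zariski closed, but it need not be a proper subvariety. A variety that is not a coset can still be covered by positive-dimensional cosets. The paper itself remarks after Theorem \ref{ML} that $\mathcal{V}^{co}=\emptyset$ for $\mathcal{C}\times\mathbb{G}_m\subseteq\mathbb{G}_m^3$, and gives $X_1X_2^2X_3-X_2X_3-1=0$ as another example. For such $\mathcal{V}$ you get $Z(\mathcal{V})=\mathcal{V}$, and the Schwartz--Zippel bound only yields $|A|^{\dim\mathcal{V}}$. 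Induction on dimension does not rescue this, because the number of maximal cosets meeting $A^g$ can grow with $K$.

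In fact no argument can close this gap, because the sum bound fails exactly in this situation. Take $G=\mathbb{G}_m$, $g=3$ and $\mathcal{V}=\{x_1+x_2=1\}\times\mathbb{G}_m$. This is irreducible, of fixed degree and dimension $2$, and not a coset, since its stabiliser is $\{1\}\times\{1\}\times\mathbb{G}_m$. Given $M$, let $S$ be the set of the $s$ primes up to $M+1$. Then the pairs $(-a,a+1)$ for $1\le a\le M$ are $M$ solutions of $u+v=1$ in $S$-units. Let $A=\{\pm\prod_{p\in S}p^{a_p}: |a_p|\le N\}$ with $N$ large enough to contain them. Then $|A\cdot A|<2^{s}|A|$, so $K=2^{s}$ does not depend on $N$, while $|\mathcal{V}\cap A^{3}|\ge M|A|$. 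Letting $N\to\infty$, a bound $\ll K^{C}+|A|$ would force $M\ll 1$ for every $M$, which is absurd.

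So what your argument, and the paper's, actually proves is $|\mathcal{V}\cap A^g|\ll K^{C}|A|^{\dim\mathcal{V}-1}$. Your special-locus refinement does give the sum form, but only under two extra conditions. First, $\mathcal{V}$ must not be covered by positive-dimensional cosets, i.e.\ $\mathcal{V}^{co}$ must be Zariski dense. Second, you must justify that $Z(\mathcal{V})$ has degree bounded in terms of $g$ and $\deg\mathcal{V}$.
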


In particular, when $G$ is not isomorphic to $\mathbb{G}_a$ and $\mathcal{V}$ is not a translate of an algebraic subgroup, there exists some $\varepsilon >0$ depending only on $\mathcal{V}$ and $g$,  such that whenever $|A+A| \leq |A|^{1  + \varepsilon}$, one has 
\[ |\mathcal{V}\cap A^g| \ll_{g, \deg(\mathcal{V})} |A|^{\dim(\mathcal{V}) - 1}. \]
We further note that this upper bound is of the right order.

\begin{example} \label{cosetexample}
    Let $G = \G_m$ and $g = 3$. The variety $\mathcal{V}$ defined by the polynomial 
\begin{equation}  \label{degenex}
P(X_1, X_2, X_3) = X_2X_3 - X_1 + 1 
\end{equation}
contains the translates $X_2X_3 = \gamma, X_1 = \gamma + 1$ of the algebraic subgroup $X_2 X_3 = 1, X_1 = 1$, for any $\gamma \not \in \{-1, 0\}$. Setting 
\[ A = \{\gamma^i : -N \leq i \leq N\}\cup\{\gamma + 1\},\]
it is easy to see that for all $N \geq 3$, we have $|\mathcal{V}\cap A^3| \gg |A|$ and $|A\cdot A| \leq 3|A|$. However, $\mathcal{V}$ is not a translate of an algebraic subgroup.  
\end{example}

In fact, our methods deliver even stronger upper bounds that depend on the largest dimension of a maximal translate of an algebraic subgroup contained in $\mathcal{V}$, see Theorem \ref{elsz} for further details.

We also note that while previous approaches to Elekes--Szab\'{o} type problems employed combinatorial geometric methods or model theory, our own approach utilises a novel interaction between Mordell--Lang and $S$-unit type results along with recent developments in additive combinatorics involving Freiman--Ruzsa type results.

One can similarly consider expansion in the image set of polynomials, that is, given some polynomial $P \in \mathbb{C}[x_1, \dots, x_n]$ and some finite set $A \subseteq \mathbb{C}$, when is the set 
\[ P(A, \dots, A) = \{ P(a_1, \dots, a_n) : a_1, \dots, a_n \in A\}\]
significantly larger than $A$. The first result in this direction is due to Elekes--R\'{o}nyai \cite{ER2000} who proved that either $P \in \mathbb{C}[x,y]$ is degenerate, in the sense that $P = h(f(x) + g(y))$ or $P = h(f(x)g(y))$ for some univariate polynomials $f,g,h$, or one has $|P(A,A)| \gg |A|^{1 + \eta}$ for every finite $A \subseteq \mathbb{C}$, with $\eta>0$ being an absolute constant. There have been various subsequent quantitative improvements, as well as explorations of cases where, as before, one restricts to a special family of sets $A$ in order to widen the choices for $P$ and get better quantitative values of $\eta$, see \cite{BB2021, JRT2022, Mu2024a} as well as \cite{HRNZ2020, Mu2024b} for related sum-product type problems.

Our main result in this direction is a significant generalisation of the above result in the wider setting of algebraic groups. In order to state this, we will need the following definition.

\begin{definition} \label{degenvar}
Let $G$  be a $1$-dimensional, connected algebraic group over $\mathbb{C}$, and let $\mathcal{B}$ be a projective variety of positive dimension. Let $\pi_1 : G^g \times \mathcal{B} \to G^g$ and $\pi_2: G^g \times \mathcal{B} \to \mathcal{B}$ be the canonical projection maps.  We call an irreducible subvariety $\mathcal{V} \subseteq  G^g \times \mathcal{B}$ \emph{degenerate}, if there exists a connected algebraic group $H \subseteq G^g$ of positive dimension, and a proper subvariety $\mathcal{W} \subsetneq G^g/H\times \mathcal{B} $ such that 
$$\mathcal{V} = \pi_H^{-1}(\mathcal{W}), $$
for the projection $\pi_H: G^g \times \mathcal{B} \rightarrow G^g/H\times \mathcal{B}$. 
    If $\mathcal{V}$ is not degenerate, we call $\mathcal{V}$ \textit{non-degenerate}. 
\end{definition} 

An example of a degenerate subvariety is given by the equation $P(X_1, X_2, X_3) = t$, where $P$ is given by \eqref{degenex}. An example of a non-degenerate subvariety is provided after the statement of Theorem \ref{elekesronyai} below.

With this in hand, we state our result as follows.

 \begin{theorem} \label{elekesronyai}
Let $G, \mathcal{B}, \pi_1$ and $\pi_2$ be as in Definition \ref{degenvar}, with $G$ not isomorphic to $\G_a$. Let $\mathcal{V} \subseteq G^g\times \mathcal{B}$ be non-degenerate of dimension $g$ and degree $d$, and such that $\pi_1$ and $\pi_2$ restricted to $\mathcal{V}$ are dominant. Let $A \subseteq G$ be a finite, non-empty set such that $|A+A| \leq K|A|$. Then for all $X \subseteq A$, we have
\begin{equation} \label{elrexp}
    |\pi_2((X^g\times \mathcal{B}) \cap \mathcal{V})| \gg_{d,g} \frac{|X|^g}{K^{O_{d,g}(1)}}.
\end{equation}
 \end{theorem}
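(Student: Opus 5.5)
We will deduce the bound from Theorem \ref{esz}, or more precisely its refinement Theorem \ref{elsz}, through a Cauchy--Schwarz (energy) argument. For $b \in \mathcal{B}$ put $r(b) = |\{ \vec{x} \in X^g : (\vec{x}, b) \in \mathcal{V}\}|$. Since $\pi_1|_{\mathcal{V}}$ is dominant and $\dim \mathcal{V} = g$, the map $\pi_1|_{\mathcal{V}}$ is generically finite of degree $\ll_{d,g} 1$. Outside a proper closed subset $Z \subsetneq G^g$, each $\vec{x}$ therefore lies over at most $O_{d,g}(1)$ points $b$, and each $\vec{x} \notin Z$ lies over at least one $b$. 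The points of $X^g$ lying in $Z$ are controlled by induction on dimension, or by a Schwartz--Zippel type bound $|Z \cap X^g| \ll_{d,g} |X|^{g-1}$. Hence $\sum_b r(b) \gg |X|^g$ up to an acceptable error. Cauchy--Schwarz then gives
\[ |\pi_2((X^g \times \mathcal{B}) \cap \mathcal{V})| \geq \frac{(\sum_b r(b))^2}{\sum_b r(b)^2}, \]
so it suffices to show $\sum_b r(b)^2 \ll K^{O(1)} |X|^g$.

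The sum $\sum_b r(b)^2$ counts points of $X^{2g}$ on the fibre product $\mathcal{W} = \mathcal{V} \times_{\mathcal{B}} \mathcal{V} \subseteq G^{2g}$, where $\mathcal{W}$ is the image in $G^{2g}$ of $\{(\vec{x}, \vec{y}, b) : (\vec{x}, b), (\vec{y}, b) \in \mathcal{V}\}$ under projection. Since $\pi_2|_{\mathcal{V}}$ is dominant, generic fibres have dimension $g - \dim \mathcal{B}$. Thus $\mathcal{W}$ has dimension $2g - \dim \mathcal{B} \leq 2g-1$ and degree $O_{d,g}(1)$. The diagonal component $\{\vec{x} = \vec{y}\}$ contributes $|X|^g$, and the non-generic fibres are again handled by dimension counting. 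Apply Theorem \ref{esz} (valid since $X^{2g} \subseteq A^{2g}$ and $|A+A| \leq K|A|$) to each remaining irreducible component $\mathcal{W}'$ of $\mathcal{W}$. This gives a contribution of at most $K^{C} + |A|^{\dim \mathcal{W}' - 1}$. That bound is too weak as stated, because the target is $|X|^g$ and not $|A|^{2g-2}$.

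We will therefore use the sharper form, Theorem \ref{elsz}. There the exponent is governed by the largest dimension of a translate of an algebraic subgroup contained in $\mathcal{W}'$, and the bound is presumably of the shape $K^{O(1)} |X|^{\text{(max coset dimension)}}$, since on a coset $\gamma + H$ one counts points of $X^{2g}$ by a Freiman/Ruzsa covering argument. The key geometric claim to prove is this: non-degeneracy of $\mathcal{V}$ forces every coset $\gamma + H' \subseteq \mathcal{W}$ to have $\dim H' \leq g$, with equality essentially only for the diagonal-type components. Suppose a coset $\gamma + H'$ of dimension $> g$ lay in $\mathcal{W}$. Project it to the first factor and take the stabiliser-like connected subgroup $H \subseteq G^g$ consisting of directions $h$ with $(\vec{x} + h, b) \in \mathcal{V}$ whenever $(\vec{x}, b) \in \mathcal{V}$. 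Dimension counting shows that $H$ has positive dimension. Since $\mathcal{V}$ is irreducible, it then follows that $\mathcal{V} = \pi_H^{-1}(\mathcal{W}_0)$, contradicting non-degeneracy.

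The main obstacle is this step: converting "a large coset inside the fibre product" into "$\mathcal{V}$ is invariant under a positive-dimensional subgroup". In particular, one must rule out cosets which are not graphs over the first factor. For that we will use that cosets in $\mathcal{W}$ through a generic point form a constructible family, and that only finitely many algebraic subgroups of bounded degree occur (by the rigidity of subgroups of $G^{2g}$ for $G \neq \mathbb{G}_a$). A Baire or genericity argument should then let us fix a single $H'$. Once this is done, the counting gives $\sum_b r(b)^2 \ll K^{O_{d,g}(1)} |X|^g$, which yields \eqref{elrexp}.
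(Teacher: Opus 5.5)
There is a genuine gap, and it is exactly at the step you flag as the main obstacle. You claim that non-degeneracy of $\mathcal{V}$ forces every coset in $\mathcal{W}=\mathcal{V}\times_{\mathcal{B}}\mathcal{V}$ to have dimension at most $g$. This is false, and with it the energy bound $\sum_b r(b)^2\ll K^{O(1)}|X|^g$ fails. Non-degeneracy constrains $\mathcal{V}$ as a whole, but special fibres $\mathcal{V}_b$ can still contain large cosets.

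Here is a counterexample. Take $G=\mathbb{G}_m$, $g=3$, $\mathcal{B}=\mathbb{P}^1$, and let $\mathcal{V}$ be the graph of $t=(1+x_1)(1+x_2)(1+x_3)$.
\begin{itemize}
\item $\mathcal{V}$ is non-degenerate: invariance under a one-parameter subgroup $s\mapsto(s^{a_1},s^{a_2},s^{a_3})$ would force $a_1=a_2=a_3=0$, because the monomials $x_1,x_2,x_3$ occur.
\item $\pi_1|_{\mathcal{V}}$ is an isomorphism, so the set $Z$ you discard is empty.
\item Yet $\mathcal{V}_0\supseteq\{-1\}\times\mathbb{G}_m^2$, so $\mathcal{W}$ contains a $4$-dimensional coset.
\end{itemize}
Now let $X=A=\{2^i:|i|\le N\}\cup\{-1\}$. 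Then $K<3$, $\sum_b r(b)=|X|^3$ and $r(0)\ge|X|^2$. Hence $\sum_b r(b)^2\ge|X|^4$, and your Cauchy--Schwarz lower bound $(\sum_b r(b))^2/\sum_b r(b)^2$ is at most $|X|^2$ rather than $|X|^3$.

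Your sketched proof of the claim breaks for the same reason. A coset $\gamma+H'\subseteq\mathcal{W}$ with $\dim H'>g$ contains some $\{\vec{x}\}\times(\vec{y}+H_0)$ with $H_0\subseteq G^g$ positive-dimensional. This only yields $(\vec{y}+H_0)\times\{b\}\subseteq\mathcal{V}$ for the finitely many $b$ lying over $\vec{x}$. That is not invariance of $\mathcal{V}$ under $H_0$, so there is no contradiction with non-degeneracy.

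The missing ingredient is the lemma in \S\ref{pcp} used to prove Theorem \ref{projections}. There, non-degeneracy is used (via the sets $Z^H\neq\mathcal{V}$) to show that every $P+H$ with $(P+H)\times\{Q\}\subseteq\mathcal{V}$ lies in one fixed proper closed set $Z'\subsetneq G^g$ of bounded degree. The points of $X^g\cap Z'$ must then be discarded, at cost $O(|X|^{g-1})$ by Lemma \ref{lemmaexception}.

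Once you do this your route can be repaired, but the fibre product and Cauchy--Schwarz become superfluous. Theorem \ref{ML}, applied to each fibre $\mathcal{V}_b$, already bounds $r(b)$ pointwise by $C^{1+\rho}$, where $\rho$ is the rank of the ambient group. Then $|\pi_2((X^g\times\mathcal{B})\cap\mathcal{V})|\ge\sum_b r(b)/\max_b r(b)$. This is Theorem \ref{projections}. The paper deduces the statement from it via Lemma \ref{coveringlemma} and pigeonhole: it passes to $X'\subseteq X$ with $|X'|\ge|X|/K^{O(1)}$ lying in one translate of a subgroup of rank $\rho=O(\log K)$, so that $C^{1+\rho}=K^{O(1)}$.

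A secondary point: Theorem \ref{elsz} as stated bounds in terms of $|A|$, which is useless when $X$ is much smaller than $A$. To use it you would have to rerun its proof with Lemma \ref{lemmaexception} applied to $X$. That gives a product bound of the form $K^{O(1)}|X|^{\mathrm{codef}}$, not the stated one.
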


As an example, one may set $G = \mathbb{G}_m, B = \mathbb{P}^1$ and the variety $\mathcal{V}$ to be defined by the equation $P(x_1, \dots, x_g) = t$, where $P \in \mathbb{C}[x_1, \dots, x_g]$ is some polynomial. We refer to $P$ as \emph{non-degenerate} with respect to $G^g$ if the variety $\mathcal{V}$ is non-degenerate. One can see that this is equivalent to the fact that $P(\vec{x}) \neq F(m_1(\vec{x}), \dots, m_{g-1}(\vec{x}))$ for any choice of monomials $m_1, \dots, m_{g-1} \in \mathbb{C}[x_1, \dots, x_g]$ and any $F \in \mathbb{C}[y_1, \dots, y_{g-1}]$. In this case, we have
\[ \pi_2((A^g\times \mathcal{B}) \cap \mathcal{V}) = P(A,\dots, A) . \] 
Thus \eqref{elrexp} implies that for any finite set $A \subseteq \mathbb{C}^{*}$ satisfying $|A \cdot A| \leq K|A|$, one has
\begin{equation}  \label{fewprd}
|P(A,\dots, A)| \gg_{d,g} \frac{|A|^g}{K^{O_{d,g}(1)}}. 
\end{equation}
The lower bound in \eqref{fewprd} was proved in \cite{Mu2024a} conditional on the weak PFR conjecture over $\mathbb{Z}$. In fact, Theorem \ref{elekesronyai} can be seen as a generalisation of the results from \cite{Mu2024a} to the much broader setting of varieties in $1$-dimensional algebraic groups over $\mathbb{C}$.

Apart from having applications to various sum-product type questions, a nice aspect of this notion of degeneracy is that it is optimal. In particular, if a polynomial $P$ is degenerate in the above sense, then \cite[Proposition 1.2]{Mu2024a} implies that for any finite $A \subsetneq \mathbb{Z}$ with $|A \cdot A| \leq K |A|$ one has 
\[ |P(A, \dots, A)| \ll_P K^{O_P(1)}|A|^{g-1} .\]
Furthermore, the lower bound in \eqref{elrexp} is almost optimal in the sense that it matches the trivial upper bound $|X|^g$ up to factors of $K^{O_{d,g}(1)}$.

We note that in some of our results, including Theorem \ref{elekesronyai}, we assume that the algebraic group $G$ is not isomorphic to $\G_a$. In fact, this is a necessary condition for many of these to hold. For example, Theorem \ref{elekesronyai} is not true for the case when $G = \G_a$. 

\begin{example}
    Let $P(x,y,z) = xy + yz + zx$. One can show that this polynomial is non-degenerate with respect to $\G_a^3$, see Appendix \ref{Appb}. Moreover, the set $A = \{1,2,\dots, N\}$ is a subset of $\G_a$ with $|A+A| \leq 2|A|$. Finally, $|P(A,A,A)| \ll N^2 = |A|^2$ implying that a conclusion akin to \eqref{fewprd} in this case fails to hold true. 
\end{example}

It is worth mentioning that Theorem \ref{elekesronyai} can be employed to prove its counterpart where we replace the condition that $A$ has a small sumset with $A$ lying in a subgroup of small rank, see Theorem \ref{projections}. In particular, given some subgroup $\Gamma \subseteq G$ of rank $r$ and some finite set $Y \subseteq \Gamma$, Theorem \ref{elekesronyai} implies that one must have
\begin{equation} \label{lowrank}   
|\pi_2((Y^g\times \mathcal{B}) \cap \mathcal{V})| \gg_{d,g} \frac{|Y|^g}{2^{O_{d,g}(r)}}. 
\end{equation}
Indeed, let $\Gamma$ be generated by $\gamma_1, \dots, \gamma_r$. Now, given any finite set $Y \subseteq \Gamma$, we can find some $L \in \mathbb{N}$ such that $Y \subseteq A$, where $A = \{ n_1 \gamma_1 + \dots + n_r \gamma_r : |n_1|, \dots, |n_r| \leq L\}$. Moreover, note that $|A+A| \leq 2^r |A|$. We may now apply Theorem \ref{elekesronyai} to obtain \eqref{lowrank}. Furthermore, this means that we may deduce Theorem \ref{thmBremner} and its generalisation Corollary \ref{corBremner} via a combination of inequality \eqref{lowrank}  and Proposition \ref{nondegenerate}.

An important step towards proving our results is that the above implication can be roughly reversed as well. We perform this reversal by combining the recent resolution of the weak polynomial Freiman--Ruzsa conjecture due to Gowers--Green--Manners--Tao \cite{GGMT2025} along with various additive combinatorial techniques and the fact that a finite subgroup of an algebraic group has a uniformly bounded number of generators, which is a simple consequence of its Lie theory.  

Thus, it suffices to work in the setting where our sets are lying in subgroups of bounded rank. One of our key results here is Theorem \ref{projections}, whose conclusion is also recorded in  \eqref{lowrank}. This is where a significant portion of our input from Diophantine geometry comes in, including utilisation of a uniform version of Mordell--Lang by David--Philippon \cite{DavidPhilippon} and the $S$-unit bounds by Evertse--Schlickewei--Schmidt \cite{ESSlinear}. 

A third step that is required to prove our sum-product results such as Theorem \ref{thmdelta}, involves showing that an auxiliary variety, which captures the movement of additive structure through the correspondences, is non-degenerate in the sense of Definition \ref{degenvar}. This is precisely the content of Proposition \ref{nondegenerate}. For the proof, which can be found in section \ref{sectioncorr}, we work on the tangent space of our algebraic groups. We note that Proposition \ref{nondegenerate}, when combined with Theorem \ref{projections}, implies a suitable variant of Theorem \ref{thmdelta} which holds for sets contained in finite rank subgroups, see Theorem \ref{rankversion}. The latter is already sufficient for our applications to Bremner's conjecture.

  There is little doubt in the authors minds that the slick interaction of Diophantine geometry with additive combinatorics that is apparent here seems to suggest that correspondences between algebraic groups present a very suitable framework to conceptualise the sum-product phenomenon. One way of viewing this interaction is that  the weak polynomial Freiman--Ruzsa conjecture over $\mathbb{Z}$ is a statement concerning simply addition in $\mathbb{Z}^r$. The latter can be embedded into algebraic groups via rank $r$ groups in a myriad ways. This is combined with the algebraic structure that is implicitly present in $\G_a, \G_m$ and elliptic curves. Ultimately, the Mordell--Lang conjecture tells us how the group arithmetic interacts with the Zariski-topology of the groups. This is especially convincing, if we remember the special role played by isogenies as these are precisely the maps that respect both the algebraic and the group structure. Any correspondence that is not a translate of an algebraic subgroup should destroy the approximate group structure of any finite set as it transports the set from one group law to another.

\subsection*{Outline}
We will present some further applications of our ideas in \S\ref{furtherapp}. We use \S\ref{setupsec} to give a brief introduction about algebraic groups and correspondences, as well as record some consequences of the uniform version of Mordell--Lang by David--Philippon \cite{DavidPhilippon} and the $S$-unit bounds by Evertse--Schlickewei--Schmidt \cite{ESSlinear}. We dedicate \S\ref{pcp} to proving Theorem \ref{projections}, and in \S\ref{sectioncorr}, we will prove Proposition \ref{nondegenerate}. We use \S\ref{addcom} to prove the additive combinatorial structural results that we require for the proofs of our results. Finally in \S\ref{alltheproofs}, we provide all the proofs of our results from \S\ref{introduction} and \S\ref{furtherapp}.
In  Appendix \ref{appa}, we give some applications of our results to Diophantine equations. Moreover, we make some brief remarks about properties of degenerate polynomials in Appendix \ref{Appb}.

\subsection*{Notation} We use Vinogradov notation. Thus we write $X \ll_z Y$ to mean that $|X| \leq C Y$ where $C>0$ is some constant depending on the parameter $z$. We write $X = O(Y)$ to mean $X \ll Y$, and we write $X\asymp Y$ to mean $X \ll Y \ll X$. We will often write $A \subseteq G$  for a finite set $A$ and an algebraic group $G$. In this case we identify (by abuse of notation) $A$ with a 0-dimensional algebraic subvariety consisting of the points of $A$ with multiplicity 1. 

\subsection*{Acknowledgements}  The second author is supported by a Leverhulme early career fellowship \texttt{ECF-2025-148}.

\section{Further Applications} \label{furtherapp}

\subsection{Generalised Bremner}
We mention a generalised version of Theorem \ref{thmBremner} for correspondences between algebraic groups. We first record an expansion version for correspondences.  

\begin{thm} \label{rankversion}
For all integers $d \geq 1$ and $g \geq 2$, there is a positive constant $0 < c(d,g) <1$ with the following property.
Let $\mathcal{C}_1, \cdots, \mathcal{C}_g$ be correspondences  of degree $d$ between algebraic groups $G$ and $H$ of dimension 1. Suppose no $\mathcal{C}_i$ is a translate of an algebraic subgroup and that $G$ is not isomorphic to the additive group $\mathbb{G}_a$. Let $\Gamma \subseteq G(\mathbb{C}) $ be a subgroup of finite rank $r$. Then for any finite subset $A\subseteq \Gamma$, one has
$$|\mathcal{C}_1(A) + \cdots + \mathcal{C}_g(A)|\geq c(d,g)^{1 + r} |A|^{g}.$$
\end{thm}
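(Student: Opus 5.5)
The plan is to deduce Theorem \ref{rankversion} from the low-rank form of Theorem \ref{elekesronyai} (the inequality \eqref{lowrank}, proved later as Theorem \ref{projections}), applied to an auxiliary variety that encodes the sum of the correspondences. The non-degeneracy of this variety is supplied by Proposition \ref{nondegenerate}.

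First, set $\mathcal{B}$ to be a projective closure of $H$; when $H$ is an elliptic curve this is $H$ itself. Define the incidence variety
\[ \mathcal{V}_0 = \{(x_1,\dots,x_g, y_1,\dots,y_g, t) : (x_i,y_i)\in \mathcal{C}_i,\ t = y_1+\dots+y_g\}, \]
and let $\mathcal{V}\subseteq G^g\times\mathcal{B}$ be the Zariski closure of its image under $(\vec x,\vec y,t)\mapsto(\vec x,t)$, taking an irreducible component if needed. Since each $\mathcal{C}_i$ has degree $d$, the fibres of $\mathcal{V}_0\to G^g$ have size at most $d^g$. Hence $\mathcal{V}$ has dimension at most $g$, and its degree is bounded in terms of $d$ and $g$. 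The projection $\pi_1$ restricted to $\mathcal{V}$ is dominant. One needs $\dim\mathcal{V}=g$, so that $\pi_2$ has generically finite-to-one behaviour of the right kind, and one needs $\mathcal{V}$ to be non-degenerate in the sense of Definition \ref{degenvar}. Both facts are what Proposition \ref{nondegenerate} should give, using that no $\mathcal{C}_i$ is a translate of an algebraic subgroup. Dominance of $\pi_2$ follows once $\dim\mathcal{V}=g\geq 1$ and the image is not a point, since $\mathcal{B}$ is a curve. If the proposition is phrased with $\mathcal{B}$ of higher dimension, I would instead keep the tuple $(y_1,\dots,y_g)$ and then project.

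Second, fix a finite $A\subseteq\Gamma$. By \eqref{lowrank}, applied with $Y=A$, we get
\[ |\pi_2((A^g\times\mathcal{B})\cap\mathcal{V})| \gg_{d,g} 2^{-O_{d,g}(r)}|A|^g. \]
Every point of $\pi_2((A^g\times\mathcal{B})\cap\mathcal{V})$ that lies in $H$ has the form $y_1+\dots+y_g$ with $y_i\in\mathcal{C}_i(a_i)$, $a_i\in A$, so it lies in $\mathcal{C}_1(A)+\dots+\mathcal{C}_g(A)$. The points at infinity of $\mathcal{B}$ contribute $O_{d,g}(1)$ elements. There is also a technicality from closure points over $\vec a$ where a correspondence fibre degenerates. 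Such points lie over a proper closed subset of $G^g$, and I would bound their number by $O_{d,g}(|A|^{g-1})$ using Theorem \ref{esz} or Mordell--Lang counting in the rank-$r$ group, with losses of shape $C^{r}$. If $|A|$ is smaller than a constant of the form $C^{1+r}$, the statement holds trivially after shrinking $c(d,g)$, since the sumset is nonempty. Otherwise these error terms are absorbed. Taking $c(d,g)$ small, using $2^{-O(r)}\geq c^{1+r}$, then yields $|\mathcal{C}_1(A)+\dots+\mathcal{C}_g(A)|\geq c(d,g)^{1+r}|A|^g$.

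The main obstacle I expect is matching the auxiliary variety $\mathcal{V}$ to the hypotheses of Proposition \ref{nondegenerate}, in particular non-degeneracy. Suppose $\mathcal{V}$ were a pullback from $G^g/H'\times\mathcal{B}$ for a positive-dimensional connected subgroup $H'$. Then along cosets of $H'$ the sum $y_1+\dots+y_g$ would be constant. Differentiating on tangent spaces, this forces the local branches of the $\mathcal{C}_i$ to have linear (homomorphism-like) derivatives, which makes some $\mathcal{C}_i$ a translate of a subgroup. This is exactly the tangent-space argument the introduction attributes to Proposition \ref{nondegenerate}, so I would simply invoke it, checking carefully that the bounded-degree and dimension conditions on $\mathcal{V}$ are uniform in $d$ and $g$.
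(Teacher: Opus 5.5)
Your proposal follows the paper's proof: the paper forms exactly this auxiliary variety $\mathcal{V}_{\rm sum}\subseteq G^g\times\overline{H}$, takes non-degeneracy, $\dim \mathcal{V}_{\rm sum}=g$ and the conditions on the two projections from Proposition \ref{nondegenerate}, and then applies Theorem \ref{projections}.

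Your worry about closure points is justified, since the paper simply asserts $\pi_2((A^g\times\overline{H})\cap\mathcal{V}_{\rm sum})=\mathcal{C}_1(A)+\dots+\mathcal{C}_g(A)$. However, your proposed fix of subtracting an $O(|A|^{g-1})$ error does not work. Over a tuple where two of the $\mathcal{C}_i$ have a pole, the fibre can be all of $\overline{H}$; for instance, with $H=\mathbb{G}_a$ and $\mathcal{C}_1=\mathcal{C}_2=\{y(x-2)=1\}$, the fibre over $(2,2)$ is all of $\mathbb{P}^1$. The clean fix is to apply Theorem \ref{projections} to $A\setminus F$, where $F$ is the $O_d(1)$ set of points of $G$ over which some $\overline{\mathcal{C}_i}$ meets $G\times(\overline{H}\setminus H)$. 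This needs no Mordell--Lang counting.
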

 We note that the theorem is slightly asymmetric as we can not allow $G$ to be isomorphic to $\mathbb{G}_a$. This is indeed necessary as for example $\Gamma = \mathbb{Z}$, $H = \mathbb{G}_m$ and $\Delta$ the diagonal as described above shows that if we drop that assumption that would imply that any set in $\mathbb{Z}$ has big product set, which is wrong.

It is also worth noting that fixing an elliptic curve $E$ over a number field $K$, there are only finitely many elliptic curves over $K$, that are isogenous to it, even over an algebraic closure. This is a consequence of Faltings's famous theorem \cite{Faltings}, later significantly improved by Masser--Wüstholz \cite{MWelliptic}, which shows that, even for a fixed number field, our theorems apply to a vast zoo of non-isogenous algebraic  groups. 

Given $k \in \mathbb{N}$, we define a \emph{proper generalised arithmetic progression of rank $k$} to be a set $P$ of the form 
\begin{equation} \label{gapdef}
P =\{P_0 + \ell_1P_1 + \cdots +\ell_kP_k \  :  \ 0\leq \ell_i \leq L_i-1\} , 
\end{equation}
where $P_0, \dots, P_k \in H$ and $L_1, \dots, L_k \geq 2$ are integers and one has $|P| = L_1 \dots L_k$. These sets play a crucial role in additive combinatorics and number theory since they act as an important family of sets that exhibit additive structure. With this in hand, we now prove our main result on Bremner's conjecture and related questions.

\begin{cor}\label{corBremner} For all integers $d \geq 1$ there exists a constant $D= D(d)$ with the following property. Let $G$ be either $\mathbb{G}_m$ or an elliptic curve $E$, and let $\mathcal{C}$ be a correspondence of degree at most $d$ between $G$ and an algebraic group $H$ of dimension 1, that is not the translate of an algebraic subgroup. Then for any subgroup  $\Gamma \subseteq G(\C)$ of rank $r$, a proper generalised arithmetic progression $P$ of rank $k$ in $\mathcal{C}(\Gamma)$ satisfies 
$$|P| \leq D^{1 + r}. $$
\end{cor}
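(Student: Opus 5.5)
The natural route is to apply Theorem \ref{rankversion} with $g = 2$ and $\mathcal{C}_1 = \mathcal{C}_2 = \mathcal{C}$. That theorem gives expansion of $\mathcal{C}(A)+\mathcal{C}(A)$, whereas a generalised arithmetic progression has small doubling, so the two should conflict. Since $G$ is $\mathbb{G}_m$ or an elliptic curve, it is not isomorphic to $\mathbb{G}_a$, and $\mathcal{C}$ is not a translate of an algebraic subgroup. So the hypotheses of Theorem \ref{rankversion} hold, and for every finite $A \subseteq \Gamma$ we get
\[ |\mathcal{C}(A)+\mathcal{C}(A)| \geq c(d,2)^{1+r}|A|^2 .\]
(If $\mathcal{C}$ only has degree at most $d$, take the minimum of $c(d',2)$ over $d' \leq d$.)

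\textbf{Step 1: pull $P$ back to $\Gamma$.} Let $P \subseteq \mathcal{C}(\Gamma)$ be a proper generalised arithmetic progression of rank $k$. For each $p \in P$ choose some $a_p \in \Gamma$ with $p \in \pi_2(\pi_1^{-1}(a_p))$, and set $A = \{a_p : p \in P\}$. The fibres of $\pi_1$ on $\mathcal{C}$ have size at most $d$, so each $a$ serves at most $d$ points $p$. Hence $|A| \geq |P|/d$. Also $P \subseteq \mathcal{C}(A)$, and $|\mathcal{C}(A)| \leq d|A| \leq d|P|$. One technicality: if $\pi_1$ is not finite everywhere, there could be an infinite fibre over finitely many points. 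I expect the paper's notion of degree-$d$ correspondence to exclude this, perhaps after discarding boundedly many points.

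\textbf{Step 2: compare with the doubling of $P$.} The obstacle is that $\mathcal{C}(A)$ may be larger than $P$: it can contain up to $d|A|$ points outside $P$, so its doubling is not directly controlled. The clean fix is to restrict to $B = \mathcal{C}(A)\cap P$; the lower bound for sums involving $B$ would have to come from applying Theorem \ref{rankversion} to a variant set, which is awkward. A simpler route is to bound crudely:
\[ |\mathcal{C}(A)+\mathcal{C}(A)| \leq |\mathcal{C}(A)|^2 \leq d^2|A|^2 ,\]
but this gives no contradiction. So instead I would bound the sumset by splitting $\mathcal{C}(A)$ into $d$ "layers" of single-valued branches. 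Cleanest is to reselect: we may choose $A$ so that $\mathcal{C}(A)\supseteq P$ and $|A|\le |P|$. Then $|\mathcal{C}(A)+\mathcal{C}(A)| \le |P+P| + 2|P|\cdot d|A| + (d|A|)^2$, which is still of size $|A|^2$. This shows that the real obstacle is that $g=2$ is too weak.

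\textbf{Step 3: the main obstacle, and how to get past it.} To beat the trivial bound, use large $g$ together with the sumset growth of $P$: $|gP| \leq g^k|P|$. This polynomial growth requires $k$ to be bounded, and bounding $k$ is what proper progressions give. Since $|P| = L_1\cdots L_k \geq 2^k$, we have $k \leq \log_2|P|$. Now choose $g$ with $g \geq 3$, and decompose $g\,\mathcal{C}(A)$ into at most $(d+1)^g$ pieces according to which points lie in $P$. To avoid the non-$P$ points, restrict to the subset $A' \subseteq A$ of elements whose full image $\mathcal{C}(a)$ lies in $P$; this is the step I expect to be delicate. Failing that, I would fall back on a version of Theorem \ref{rankversion} in which a single branch is used. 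With such a statement, $c^{1+r}|A|^g \leq g^k d|P|$, where $c = c(d,g)$. Using $|A|\ge |P|/d$ and $g^k \le |P|^{\log_2 g}$, and taking $g$ large enough that $g-1 > \log_2 g$ (for example $g=4$, where $4-1 = 3 > 2 = \log_2 4$), this yields $|P|^{g-1-\log_2 g} \ll_d c^{-(1+r)}$. Hence $|P| \leq D^{1+r}$ with $D$ depending only on $d$.
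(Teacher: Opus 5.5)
Your outline works and takes a different route from the paper's. However, the step that carries the weight is the one you leave open, so here is how to fill it.

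Your first option, restricting to those $a$ with $\mathcal{C}(a)\subseteq P$, fails in general: if each $\mathcal{C}(a)$ has two points and only one lies in $P$, that set is empty. The single-branch statement you fall back on is true. It comes from the \emph{proof} of Theorem \ref{projections} applied to $\mathcal{V}_{\rm sum}$ (via Proposition \ref{nondegenerate}), not from the statement of Theorem \ref{rankversion}. That proof produces a closed $Z\subsetneq G^g$ with $|A^g\cap Z|\ll_{d,g}|A|^{g-1}$ such that, for every $b\in\overline{H}$, at most $c^{-(1+r)}$ tuples $\mathbf{a}\in A^g\setminus Z$ satisfy $(\mathbf{a},b)\in\mathcal{V}_{\rm sum}$, for a suitable $c=c(d,g)\in(0,1)$. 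Fix your branch $Q:A\to P$ with $Q(a)\in\mathcal{C}(a)$. Since $(\mathbf{a},Q(a_1)+\dots+Q(a_g))\in\mathcal{V}_{\rm sum}$, the map $\mathbf{a}\mapsto Q(a_1)+\dots+Q(a_g)$ has fibres of size at most $c^{-(1+r)}$ on $A^g\setminus Z$ and takes values in $gP$. Hence $|gP|\gg c^{1+r}|A|^g$ once $|A|$ is large in terms of $d,g$; otherwise $|P|\le d|A|$ is already bounded. Your computation $c^{1+r}(|P|/d)^g\ll|gP|\le g^k|P|\le|P|^{1+\log_2 g}$ with $g=4$ then gives $|P|\le D^{1+r}$; the stray factor $d$ in your inequality is harmless. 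Your worry about infinite fibres of $\pi_1$ is unnecessary: an infinite fibre would be a curve $\{x\}\times H$ inside the irreducible curve $\mathcal{C}$, forcing equality and contradicting dominance of $\pi_1$.

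The paper instead stays at $g=2$ and uses the expansion bound twice. First it applies it to the cube $P'=\{P_0+\sum_i\ell_iP_i:\ell_i\in\{0,1\}\}\subseteq P$, where $|P'+P'|\le 3^k=|P'|^{\log_2 3}$ forces $2^k\le C_2^{1+r}$. Then it applies it to $P$ itself, using $|P+P|\le 2^k|P|\le C_2^{1+r}|P|$. Bounding $k$ first is what lets $g=2$ suffice; your cruder $k\le\log_2|P|$ costs you $g\ge 3$, which only changes the constant. A single step with $g=2$ also works, since $|P+P|\le\prod_i(2L_i-1)\le|P|^{\log_2 3}$. So your Step 2 diagnosis is misplaced: the obstruction there is the extra points of $\mathcal{C}(A)$, not the size of $g$.

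The paper's phrase ``applying Theorem \ref{rankversion} for $P'$'' tacitly needs the same single-branch fact. This is because $P'$ lies in $H$ rather than $\Gamma$, and $\mathcal{C}(A)+\mathcal{C}(A)$ need not lie in $P'+P'$. Once you add the fibre-counting remark above, your argument is more careful than the written proof on this point.
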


 Theorem \ref{thmBremner} follows in a straightforward manner from the above result, see \S\ref{alltheproofs}. Corollary \ref{corBremner} also gives a more general and uniform version of \cite[Theorem 6.1]{GFP2021}.   
 
A nice aspect of our upper bound is that it is completely independent of the rank $k$ of the progression. Moreover, generalised arithmetic progressions are indeed a strictly more general pattern. For example, the generalised arithmetic progression $P'$ as described in \eqref{gapdef} can not be covered by fewer than $C^k$ arithmetic progressions, for some constant $C>1$, but we still obtain a uniform upper bound of the form $|P'| \leq D^{1 + r}$ for some $0 < D \ll_d 1$ which is independent of $k$.

\subsection{Sum-product phenomenon}

Returning to the generalised sum-product phenomenon,  Bays--Breuillard \cite{BB2021} speculated that the exponent $\delta$ in their result recorded in \eqref{bb19} should be independent of $G$ and $H$. Significantly generalising this model-theoretic and incidence-geometric framework, Chernikov--Peterzil--Starchenko \cite{CPS2024} confirmed the speculation of Bays--Breuillard in a quantitative sense. In particular, they proved that $\delta = 1/21$ is admissible in  \eqref{bb19}. As an application of our methods, we prove an asymmetric, uniform version of \eqref{bb19}, thus confirming the speculation of Bays--Breuillard via a very different set of techniques.

\begin{thm} \label{sumprod1}
Given $d \in \mathbb{N}$, there exists some constant $D= D(d)>0$ such that the following is true.  Let $\delta >0$, let $G$ and $H$ be connected algebraic groups of dimension 1 with $G$ not isomorphic to $\mathbb{G}_a$, and let $\mathcal{C}$ be an algebraic correspondence between $G$ and $H$ of degree $d \geq 2$ that is not the translate of an algebraic subgroup.   Then for any finite, non-empty set $A \subseteq G$,  one has either
\begin{equation} \label{fpmsg}
|A+A| > |A|^{1 + \delta} \ \ \text{or} \ \ |\mathcal{C}(A) + \mathcal{C}(A)| \geq D^{-1} |A|^{2 - D\delta }.
\end{equation}
\end{thm}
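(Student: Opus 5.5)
Assume $|A+A| \leq |A|^{1+\delta}$ and write $K = |A|^{\delta}$. The plan is to pass to a large piece of $A$ lying in a subgroup of bounded rank, apply the finite-rank expansion result Theorem \ref{rankversion} with $g=2$ to that piece, and check that every loss is polynomial in $K$.

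\textbf{Step 1 (structure).} Apply the structural results of \S\ref{addcom}. These combine the weak polynomial Freiman--Ruzsa theorem of Gowers--Green--Manners--Tao with the fact that finite subgroups of a one-dimensional algebraic group need boundedly many generators. The goal is a subset $X \subseteq A$ with
\[
|X| \geq K^{-O(1)}|A|,
\]
contained in a subgroup $\Gamma \subseteq G(\mathbb{C})$ of rank $r \ll \log K + O(1)$.

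The intended route is as follows. By Freiman's theorem in torsion-free settings, a set of doubling $K$ is covered by $K^{O(1)}$ translates of a set living in a rank $O(\log K)$ group. For $G = \mathbb{G}_m$ or an elliptic curve, the torsion part is handled because finite subgroups are generated by at most $2$ elements. By pigeonhole, one translate $x + \Gamma'$ contains $\geq K^{-O(1)}|A|$ elements of $A$. Adjoining $x$ to $\Gamma'$ raises the rank by one, so we may take $\Gamma = \Gamma' + \langle x\rangle$.

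Getting the rank logarithmic in $K$, rather than polynomial, is exactly what the weak PFR input provides. It is also the main obstacle: one must ensure that this conversion works uniformly in $G$, with constants independent of the group.

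\textbf{Step 2 (expansion in finite rank).} Apply Theorem \ref{rankversion} with $g=2$ and $\mathcal{C}_1=\mathcal{C}_2=\mathcal{C}$. This is legitimate because $\mathcal{C}$ is not a translate of an algebraic subgroup and $G \not\cong \mathbb{G}_a$. It gives
\[
|\mathcal{C}(X)+\mathcal{C}(X)| \geq c(d,2)^{1+r}|X|^2.
\]
Since $r \ll \log K + O(1)$, we have $c(d,2)^{1+r} \geq D_0^{-1} K^{-O_d(1)}$.

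\textbf{Step 3 (combine).} Since $X \subseteq A$, we have $\mathcal{C}(X) \subseteq \mathcal{C}(A)$. Therefore
\[
|\mathcal{C}(A)+\mathcal{C}(A)| \geq D_0^{-1}K^{-O_d(1)}|A|^2 = D_0^{-1}|A|^{2-O_d(\delta)}.
\]
Choosing $D$ larger than $D_0$ and larger than the implied exponent gives \eqref{fpmsg}. No dependence on $G$ or $H$ enters, because Theorem \ref{rankversion} and the structure lemma are uniform.
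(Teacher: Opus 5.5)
Your argument is correct and is essentially the paper's. Assume $|A+A|\le |A|^{1+\delta}$, extract a subset of $A$ of size $|A|^{1-O(\delta)}$ lying in a subgroup of rank $O(\delta\log|A|+1)$, and apply Theorem \ref{rankversion} with $g=2$ to that subset. The only difference is that the paper invokes Lemma \ref{wpfrlemma} directly, which already places a dense subset of $A$ inside a subgroup, so your detour through Lemma \ref{coveringlemma}, pigeonholing onto one translate and adjoining $x$, is valid but unnecessary. Note also that the proof of that lemma handles torsion by a Freiman $2$-isomorphism of a dense piece of $A$ into $\mathbb{Z}^{l+2}$, not merely by counting generators of finite subgroups.
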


Indeed, Theorem \ref{sumprod1} implies that $\delta = 1/(D+1)$ is admissible in \eqref{bb19}. While this choice of $\delta$ is much smaller than $1/21$, the main novelty of our result lies in the asymmetry between the lower bounds for $|A+A|$ and $|\mathcal{C}(A) + \mathcal{C}(A)|$ in \eqref{fpmsg}. For instance, if we set $G = \mathbb{G}_m$ and $H = \mathbb{G}_a$ or $\mathbb{G}_m$ and the correspondence $\mathcal{C}$ to be given by the graph of $y = \varphi(x)$ for some suitable polynomial $\varphi \in \mathbb{C}[x]$, we obtain the following corollary.

\begin{corollary} 
    Let $A \subseteq \mathbb{C}$ be a finite set, let $d \geq 1$ be an integer, let $K \geq 1$ and let $\varphi \in \mathbb{C}[x]$ have $\deg \varphi = d$. If $|A \cdot A| \leq K|A|$, then 
    \begin{equation} \label{wesz}
    |\varphi(A) + \varphi(A)| \gg_d |A|^{2}/K^{D}
    \end{equation}
    where $D>0$ is some constant depending on $d$. Moreover, if $\varphi(x)$ is not of the form $c x^d$ for any $c \in \mathbb{C}$, then we also have
    \[ |\varphi(A) \cdot \varphi(A)| \gg_d |A|^2/K^D.\]
\end{corollary}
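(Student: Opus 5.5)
This corollary is the specialisation of Theorem \ref{sumprod1} to $G = \mathbb{G}_m$, with $\mathcal{C}$ the graph of $\varphi$. The only real work is to convert the hypothesis $|A\cdot A|\le K|A|$ into the form $|A+A| \le |A|^{1+\delta}$, and to check that the relevant graph is not a translate of an algebraic subgroup.

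First I deal with $0$. If $0\in A$, I remove it; this changes $|A|$, $|A\cdot A|$ and the sumsets $\varphi(A)+\varphi(A)$, $\varphi(A)\cdot\varphi(A)$ by at most bounded factors or additive amounts. Also, if $|A|$ is bounded in terms of $d$, the claim is trivial. So I may assume $A\subseteq \mathbb{C}^\ast = \mathbb{G}_m(\mathbb{C})$, where the group operation is multiplication.

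Next I set up the correspondence. For the first claim, take $H=\mathbb{G}_a$ and $\mathcal{C}=\{(x,\varphi(x)) : x\in\mathbb{G}_m\}$, the closure of the graph in $\mathbb{G}_m\times\mathbb{G}_a$. This is an irreducible curve of degree $O(d)$, and $\mathcal{C}(A)=\varphi(A)$.

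I need $\mathcal{C}$ not to be a translate of an algebraic subgroup. The one-dimensional connected subgroups of $\mathbb{G}_m\times\mathbb{G}_a$ are only $\mathbb{G}_m\times\{0\}$ and $\{1\}\times\mathbb{G}_a$, since there are no nontrivial homomorphisms between $\mathbb{G}_m$ and $\mathbb{G}_a$. A translate of the first would force $\varphi$ to be constant; a translate of the second is not a graph over $\mathbb{G}_m$. So for $d\ge1$ the graph is never a coset.

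For the second claim, take $H=\mathbb{G}_m$ and restrict $\mathcal{C}$ to the open set where $\varphi\neq 0$. Cosets of connected one-dimensional subgroups of $\mathbb{G}_m^2$ are curves $x^a y^b=c$, so a graph $y=\varphi(x)$ is such a coset exactly when $\varphi(x)=c x^{n}$. The hypothesis rules this out. Points $a\in A$ with $\varphi(a)=0$ number at most $d$ and can be discarded.

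Theorem \ref{sumprod1} is stated for degree $d\ge2$. The correspondence degree here is bounded in terms of $d$, and I allow degree $\max(2,O(d))$ in the constant $D$. The degree-one case $\varphi$ linear I treat the same way, or by trivially enlarging the degree.

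Now I apply Theorem \ref{sumprod1}. Write $K=|A|^{\delta}$, that is, choose $\delta=\log K/\log|A|$. Then $|A\cdot A|\le K|A|=|A|^{1+\delta}$, so the first alternative in \eqref{fpmsg} fails. Hence
$$|\varphi(A)+\varphi(A)| \ge D^{-1}|A|^{2-D\delta} = D^{-1}|A|^2/K^{D},$$
and likewise for $\varphi(A)\cdot\varphi(A)$. The bound is vacuous when $K\ge |A|^{2/D}$, so $\delta\le 2/D$ can be assumed throughout.

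The main obstacle is the coset-classification step, especially the multiplicative case, which I handle via the character description of subtori of $\mathbb{G}_m^2$. The rest is bookkeeping of constants depending only on $d$.
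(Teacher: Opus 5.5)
Your proposal is correct and is essentially the paper's own argument: the paper obtains this corollary directly from Theorem \ref{sumprod1} by taking $G=\mathbb{G}_m$, $H=\mathbb{G}_a$ or $\mathbb{G}_m$, and $\mathcal{C}$ the graph of $y=\varphi(x)$, and your choice $\delta=\log K/\log|A|$ together with the coset classification in $\mathbb{G}_m\times\mathbb{G}_a$ and $\mathbb{G}_m^2$ fills in exactly the details the paper leaves implicit. Incidentally, the graph of a degree-$d$ polynomial already has correspondence degree $d+1\geq 2$, so linear $\varphi$ needs no separate treatment.
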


We note that simply setting $\varphi = x$ in \eqref{wesz} immediately delivers the so-called weak Erd\H{o}s--Szemer\'{e}di Conjecture over $\mathbb{C}$. This was first proven by Bourgain--Chang \cite{BC2004} for sets $A \subseteq \mathbb{Q}$, with work of Chang \cite{Ch2009} delivering this conclusion for sets $A \subseteq \mathbb{R}$, conditional on the weak PFR conjecture over $\mathbb{Z}$. Building on the work of Chang and employing the resolution of the weak PFR conjecture over $\mathbb{Z}$ due to Gowers--Green--Manners--Tao \cite{GGMT2025}, the second author proved that for any finite set $A \subsetneq \mathbb{C}$ with $|A \cdot A| \leq K|A|$, one has at most $|A|^2/K^{O(1)} $ many quadruples $a_1, \dots, a_4 \in A$ such that $a_1 + a_2 = a_3 + a_4$, see \cite[Proposition 1.5]{Mu2024c}. This then immediately implies that $|A+A| \geq |A|^2/K^{O(1)}$.

\subsection{Elekes--Szab\'{o}}

As remarked in \S1, we are able to prove a more general upper bound for quantities of the form $|\mathcal{V}\cap A^g|$ which depend on the maximal dimension of a translate of an algebraic subgroup contained in $\mathcal{V}$. In order to elaborate on this, we present the following definition.

\begin{definition}
    For an irreducible  subvariety $\mathcal{V} \subseteq G^g$, we define the \textit{coset defect}, denoted $\text{codef}(\mathcal{V})$, to be the maximal dimension of a connected algebraic group $H \subseteq G^g$, such that $\gamma + H \subseteq \mathcal{V}$ for some $\gamma \in G^g(\C)$.
\end{definition}

With this in hand, we state our result.

\begin{theorem} \label{elsz}
    Let $G$ be a $1$-dimensional, connected algebraic group over $\mathbb{C}$ not isomorphic to $\mathbb{G}_a$. Let $A \subseteq G$ be a finite set such that $|A + A| \leq K|A|$ for some $K \geq 1$. Then for any irreducible subvariety $\mathcal{V} \subseteq G^g$, one has
   \begin{align*}
       |\mathcal{V}\cap A^g| \ll_{g, \deg(\mathcal{V})} (K^{C} + |A|^{{\rm codef}(\mathcal{V})}),
   \end{align*} 
   where the constant $C>0$ depends only on $\deg(\mathcal{V})$ and $g$. 
\end{theorem}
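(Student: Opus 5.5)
We prove Theorem \ref{elsz} by reducing from a set with small doubling to a set inside a subgroup of bounded rank, and then applying a uniform Mordell--Lang bound.

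\textbf{Step 1: reduction to bounded rank.} Since $|A+A|\le K|A|$ and $G$ is a $1$-dimensional connected algebraic group, we apply the Freiman--Ruzsa type structure result from \S\ref{addcom}. This result rests on the weak PFR theorem of Gowers--Green--Manners--Tao over $\mathbb{Z}$, together with the fact that finite subgroups of $G(\mathbb{C})$ have a uniformly bounded number of generators. It gives a subgroup $\Gamma\subseteq G(\mathbb{C})$ of rank $r\ll \log K$ and translates $x_1,\dots,x_m\in G$ with $m\le K^{O(1)}$ such that
\[ A\subseteq \bigcup_{i\le m}(x_i+\Gamma). \]
Enlarging $\Gamma$ by the $x_i$ would cost rank $m$, which is too much. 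Instead we split $A^g$ into $m^g\le K^{O_g(1)}$ pieces $A^g\cap(\vec{x}+\Gamma^g)$ with $\vec{x}=(x_{i_1},\dots,x_{i_g})$. Translating by $-\vec{x}$, each piece becomes $(\mathcal{V}-\vec{x})\cap(\Gamma^g\cap(A-\vec{x}))$. Here $\mathcal{V}-\vec{x}$ has the same degree and the same coset defect as $\mathcal{V}$.

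\textbf{Step 2: uniform Mordell--Lang on each piece.} Fix a translate $\mathcal{W}=\mathcal{V}-\vec{x}$. By the uniform Mordell--Lang of David--Philippon (and Evertse--Schlickewei--Schmidt when $G=\mathbb{G}_m$), as recorded in \S\ref{setupsec}, the set $\mathcal{W}\cap\Gamma^g$ is contained in a union of at most $c(\deg\mathcal{V},g)^{1+r}$ translates $\gamma_j+H_j$, where each $\gamma_j+H_j\subseteq\mathcal{W}$ and $H_j$ is a connected algebraic subgroup. By definition of the coset defect, $\dim H_j\le \mathrm{codef}(\mathcal{V})$. Since $r\ll\log K$, the number of translates is at most $K^{O(1)}$.

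\textbf{Step 3: counting on each translate.} We need
\[ |(\gamma+H)\cap A^g|\ll |A|^{\dim H}. \]
A connected algebraic subgroup $H\subseteq G^g$ of dimension $h$ admits a coordinate projection $G^g\to G^h$ that is finite on $H$, of degree bounded in terms of $\deg H$. Thus $|(\gamma+H)\cap A^g|\ll |A|^h$, uniformly once $\deg H_j$ is bounded in terms of $\deg\mathcal{V}$ and $g$; this degree bound is part of the David--Philippon output. When $h=0$, each translate contributes $O(1)$ points.

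\textbf{Step 4: combining.} Summing over pieces and translates gives
\[ |\mathcal{V}\cap A^g|\ll K^{O(1)}\,|A|^{\mathrm{codef}(\mathcal{V})}, \]
which is weaker than the claimed bound $K^C+|A|^{\mathrm{codef}}$. This step is the main obstacle, and it needs refinement. I would argue by induction on $\dim\mathcal{V}$ and split into two regimes.

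\emph{Large $|A|$.} If $|A|\ge K^{C'}$, I would exploit that $A$ is dense in the structured set. Replace the covering in Step 1 by an approximate group $P$ with $|P|\le K^{O(1)}|A|$, where $A$ is covered by $K^{O(1)}$ translates of $P$. Only translates $\gamma+H$ that are contained in $\mathcal{V}$ and not in a smaller-dimensional piece should contribute the main term, and there are $O(1)$ maximal such families independent of $K$; this is a finiteness statement for maximal coset families in $\mathcal{V}$ that is uniform in degree (Bombieri--Zannier / Laurent-type structure of the special locus). The points off these finitely many maximal cosets lie in $\mathcal{V}\cap\Gamma^g$ minus the special locus. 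Uniform Mordell--Lang bounds these by a count of points, at most $K^{O(1)}$, rather than cosets.

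\emph{Small $|A|$.} If $|A|< K^{C'}$, the trivial bound $|A|^{g}\le K^{C}$ suffices.

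So the key input is the uniform finiteness of the maximal translates of subgroups in $\mathcal{V}$, with the count depending only on $\deg\mathcal{V}$ and $g$. That finiteness is precisely what removes the factor $K^{O(1)}$ from the main term.
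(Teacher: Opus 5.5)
Your Steps 1--3 are, in substance, the paper's own proof. The paper covers $A$ by $(100K)^{C'+1}$ translates of a set of rank $\ll \log K$ (Lemma \ref{coveringlemma}), applies Theorem \ref{ML} to $(\mathcal{V}-x)\cap\Gamma_A$ for each translate $x$, and bounds each resulting coset via Lemma \ref{lemmaexception}. You are right that this only gives $|\mathcal{V}\cap A^g|\ll K^{O(1)}|A|^{\mathrm{codef}(\mathcal{V})}$. The paper's closing sentence (``we get the desired bound'') passes over exactly this point.

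\textbf{The gap is your Step 4.} The Bombieri--Zannier/Laurent finiteness you invoke bounds the number of \emph{subgroups} $H$ that have a translate in $\mathcal{V}$. It does not bound the number of \emph{translates}. Each family $\bigcup_\gamma(\gamma+H)\subseteq\mathcal{V}$ is positive-dimensional and may fill all of $\mathcal{V}$, so the set of points ``off the special locus'' can be empty and everything is main term. Moreover, $A^g$ can meet $\gg K$ distinct members of a single family, each in $\asymp|A|^{\dim H}$ points. Uniform Mordell--Lang bounds the number of such members by $K^{O(1)}$ and nothing better.

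\textbf{No refinement can fix this, because the additive bound is false.} Take the following data:
\begin{itemize}
\item $G=\mathbb{G}_m$ (group law written multiplicatively) and $g=3$.
\item $\mathcal{V}=\{x_1=x_2+1\}\times\mathbb{G}_m$. This is the paper's own example with $\mathcal{V}^{co}=\emptyset$; it is irreducible, not a coset, and $\mathrm{codef}(\mathcal{V})=1$.
\item $A=\{\gamma^i:|i|\le N\}\cup\{\gamma^j+1:|j|\le M\}$, with $\gamma$ not a root of unity and $2N+1>(2M+1)^2$.
\end{itemize}
Then $|A\cdot A|\le (4N+1)+(2N+1)(2M+1)+(2M+1)^2<(2M+4)|A|$, so $K=2M+4$ is admissible. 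The points $(\gamma^j+1,\gamma^j)$ with $|j|\le M$ lie on the curve $x_1=x_2+1$, so $|\mathcal{V}\cap A^3|\ge(2M+1)|A|=(K-3)|A|$.

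Now take $N\ge K^{C}$, so that $K^C+|A|<2|A|$, and let $M\to\infty$. This contradicts $|\mathcal{V}\cap A^3|\ll K^{C}+|A|$ for any implied constant depending only on $g$ and $\deg\mathcal{V}$. The same example also defeats Theorem \ref{esz}, since there $\dim\mathcal{V}-1=1$.

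What is true, and what your Steps 1--3 already prove, is the multiplicative bound $|\mathcal{V}\cap A^g|\ll_{g,\deg\mathcal{V}}K^{C}|A|^{\mathrm{codef}(\mathcal{V})}$. No version of Step 4 can turn the product into a sum.
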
 

We note that $\mathcal{V}$ might be covered by translates of algebraic subgroups even though $\mathcal{V}$ is not a translate of an algebraic subgroup. However, Theorem \ref{elsz} still applies to such varieties. For example, for $g = 3$ the variety $\mathcal{V}$ defined by $X_1X_2^2X_3 - X_2X_3 - 1 =0$ is not a coset, but is covered by cosets of the form $X_2X_3 = \gamma, X_1X_2 = (1 + \gamma)/\gamma $ for $\gamma \in \C \setminus \{-1, 0\}$. Theorem \ref{elsz} admits the following corollary.

\begin{cor}
   For every $d,g \geq 1$ there exists $\epsilon > 0$, such that for an irreducible variety $\mathcal{V}\subseteq G^g$ of degree at most $d$, if $|A + A|\leq |A|^{1 + \epsilon}$ and ${\rm codef}(\mathcal{V}) \geq 1$, then
   \begin{align}\label{ESsharp} |A^g \cap \mathcal{V}|\ll_{g, \deg(\mathcal{V})}|A|^{{\rm codef}(\mathcal{V})}.
   \end{align}
   Finally, if $\mathcal{V}$ does not contain a positive dimensional coset, then for every $\epsilon > 0$, if $|A + A|\leq |A|^{1 + \epsilon}$, then 
   $$|\mathcal{V}\cap A^g|\ll_{g,\deg(\mathcal{V})}|A|^{C\epsilon}, $$
   where $C>0$ is some constant depending on $\deg(\mathcal{V})$ and $g$.
\end{cor}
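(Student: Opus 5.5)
This is a direct corollary of Theorem \ref{elsz}. The plan is to choose $\epsilon$ small enough that the $K^{C}$ term is dominated by $|A|^{\mathrm{codef}(\mathcal{V})}$.

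Let $C = C(d,g)$ be the constant from Theorem \ref{elsz}. Since irreducible varieties of degree at most $d$ in $G^g$ have finitely many possible degrees, we may take $C$ to be the maximum over degrees $\le d$; this makes $C$ uniform. Put $K = |A|^{\epsilon}$, so the hypothesis $|A+A| \le |A|^{1+\epsilon}$ reads $|A+A| \le K|A|$.

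Theorem \ref{elsz} then gives
\[
|\mathcal{V}\cap A^g| \ll_{g,\deg(\mathcal{V})} |A|^{C\epsilon} + |A|^{\mathrm{codef}(\mathcal{V})}.
\]

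For the first claim, choose $\epsilon = 1/C$. Then $C\epsilon = 1 \le \mathrm{codef}(\mathcal{V})$, and since $|A|\ge 1$ the first term is at most the second. This yields \eqref{ESsharp}, with $\epsilon$ depending only on $d$ and $g$.

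For the second claim, suppose $\mathcal{V}$ contains no positive-dimensional coset. The only connected algebraic subgroup $H$ with a translate inside $\mathcal{V}$ is then the trivial one, so $\mathrm{codef}(\mathcal{V}) = 0$. The second term is therefore $|A|^{0} = 1$, which is at most $|A|^{C\epsilon}$. This gives $|\mathcal{V}\cap A^g| \ll |A|^{C\epsilon}$ for every $\epsilon>0$.

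The one point needing care is the convention for $\mathrm{codef}$ when no positive-dimensional coset is present. If $\mathcal{V}$ is nonempty it contains a point, which is a translate of the trivial group, so $\mathrm{codef}(\mathcal{V})=0$. If $\mathcal{V}$ is empty the bound is trivial. There is no substantive obstacle; the content lies entirely in Theorem \ref{elsz}.
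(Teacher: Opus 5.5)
Your argument is correct and is the paper's own route. The paper gives no separate proof: it treats the corollary as immediate from Theorem \ref{elsz} with $K = |A|^{\epsilon}$. Your choice $\epsilon = 1/C$ (with $C$ maximised over the finitely many degrees $\le d$) for the first claim, and the observation $\mathrm{codef}(\mathcal{V}) = 0$ for the second, are exactly what that deduction requires, with the standing hypothesis $G \not\cong \mathbb{G}_a$ inherited from Theorem \ref{elsz}.
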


As in the case of Theorem \ref{esz} and Example \ref{cosetexample}, one can show that the upper bounds in Theorem \ref{elsz} and inequality \ref{ESsharp} are of the right order.

\section{Setup} \label{setupsec}

\subsection{Algebraic groups}

We will be working with connected algebraic groups over $\C$ of dimension $1$. An algebraic group (over $\C$) is an algebraic variety $G$ with a morphism from $G \times G $ to $G$ that induces a group operation on $G(\C)$. As an example, consider the algebraic group given by the variety $\mathbb{A}^1$ along with the morphism  which maps $(x,y)$ to $x+y$. We refer to this algebraic group as the additive group $\mathbb{G}_a$. Another example is the algebraic group given by the algebraic variety $\mathbb{A}^1 \setminus \{0\}$ with the morphism that maps $(x,y)$ to $xy$. We refer to this algebraic group as the multiplicative group $\G_m$. A third example of this is an elliptic curve over $\mathbb{C}$ with its canonical group operation \cite[III.2]{Silverman_the_arithmetic_of_elliptic_curves}.

All the above three examples are $1$-dimensional, connected algebraic groups over $\mathbb{C}$, and in fact, these are essentially the only possible examples. In order to see this, note that the analytification of $G$ is a complex Lie group, and therefore has an exponential map
\begin{align*}
    \exp_G : \C \to G(\C),
\end{align*}
which is analytic and non-constant, since it is a local diffeomorphism \cite[Proposition 20.8 (f)]{Lee_introduction_to_smooth_manifolds}. When $G$ is commutative, $\exp_G$ is a morphism of Lie groups. For this, see \cite[Exercise 20-8]{Lee_introduction_to_smooth_manifolds} or the Baker--Campbell--Hausdorff formula. The following argument in complex analysis now implies that $\exp_G$ is surjective with discrete kernel.

\begin{proposition}
\label{structure_of_morphisms_of_1_dim_Lie_groups}
    Let $G$ and $H$ be complex Lie groups of dimension $1$. Suppose that $H$ is connected. Every morphism of complex Lie groups from $G$ to $H$ is either trivial or is surjective with discrete kernel.
\end{proposition}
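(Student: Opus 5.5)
Let $\phi : G \to H$ be a morphism of complex Lie groups, and suppose $\phi$ is not trivial. The plan is to differentiate $\phi$ at the identity, use that $d\phi_e$ is $\C$-linear between one-dimensional spaces, and then use connectedness of $H$.

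First we show that $d\phi_e$ is nonzero. The differential $d\phi_e : T_eG \to T_eH$ is a $\C$-linear map between one-dimensional complex vector spaces, so it is either zero or an isomorphism. Suppose it were zero. Because $\phi$ is a group homomorphism, $\phi\circ L_g = L_{\phi(g)}\circ \phi$ for every $g\in G$, where $L$ denotes left translation. Differentiating gives $d\phi_g = d(L_{\phi(g)})_e \circ d\phi_e \circ d(L_{g^{-1}})_g$, so $d\phi$ would vanish everywhere. The map $\phi$ would then be locally constant, hence constant on the identity component $G^0$, so $\phi(G^0)=\{e\}$.

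This does not yet contradict nontriviality when $G$ is disconnected, so we need more care. We will apply the dichotomy to the restriction $\phi|_{G^0}$, and handle the non-identity components as follows. If $d\phi_e = 0$, then $\phi$ is locally constant on all of $G$. Its kernel is then open, and so $\phi(G)$ is a countable (or discrete) subgroup. In the intended application, $G$ is connected, so $\phi$ would be trivial. For disconnected $G$, a locally constant nontrivial homomorphism (for instance a quotient onto a finite group) is not surjective onto the connected $H$, since $H$ is uncountable. So we should either assume $G$ is connected, or interpret ``trivial'' as $d\phi_e=0$. I would add the connectedness hypothesis, which is what is used for $\exp_G:\C\to G(\C)$.

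So assume $G$ is connected. Then $d\phi_e\neq 0$, and by the translation identity above $d\phi_g$ is an isomorphism at every $g$. Hence $\phi$ is a local biholomorphism.

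It follows that $\phi$ is an open map, and so $\phi(G)$ is an open subgroup of $H$. An open subgroup is also closed, since its complement is a union of open cosets. Because $H$ is connected, $\phi(G)=H$, so $\phi$ is surjective. The kernel of a local homeomorphism is discrete: $\phi$ is injective on a neighbourhood $U$ of $e$, so $\ker\phi\cap U=\{e\}$, and translating this neighbourhood shows every point of the kernel is isolated.

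The main subtlety is the case where $G$ is disconnected, discussed above. Otherwise the argument is the standard rank argument, and the essential input is that the differential is $\C$-linear between one-dimensional spaces.
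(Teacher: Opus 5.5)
Your proof is correct, and your remark about disconnected $G$ is right. As stated, the proposition fails for, e.g., $G=\C\times\Z$, $H=\C$, $(z,n)\mapsto n$. The paper's own proof also assumes $G$ connected without saying so: both the identity theorem and the open mapping theorem need $f$ to be nonconstant on a connected domain. In the only place the proposition is used, $\exp_G:\C\to G(\C)$, the source is connected, so your added hypothesis loses nothing.

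Your route differs from the paper's. The paper stays within one-variable complex analysis:
\begin{itemize}
\item discreteness of the kernel comes from uniqueness of analytic continuation (a nonconstant holomorphic map has discrete fibres);
\item openness of the image comes from the open mapping theorem;
\item it then finishes with the same open-subgroup/connectedness argument you use.
\end{itemize}
You instead use the dichotomy that $d\phi_e$ is zero or an isomorphism. Left translation then shows $\phi$ is a local biholomorphism at every point, and the inverse function theorem gives openness and discreteness at once.

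The paper's version is shorter, but it relies on facts special to dimension one: a nonconstant holomorphic map of several variables need not be open or have discrete fibres. Your version gives the stronger conclusion that $\phi$ is unramified everywhere. It also carries over verbatim to any Lie group homomorphism whose differential at $e$ is an isomorphism. Finally, it shows that connectedness of $G$ is only needed to rule out a nontrivial $\phi$ with $d\phi_e=0$. So the statement holds for arbitrary $G$ if ``trivial'' is read as ``trivial on $G^0$''.
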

\begin{proof}
    Suppose the morphism $f$ is not trivial. Discreteness of the kernel follows from the uniqueness of analytic continuation. Since $\dim(G) = \dim(H) = 1$, the open mapping theorem implies that the image $U$ of $f$ is open. The set $U$ is also a Lie subgroup of $H$. Therefore $H$ is a disjoint union of the cosets of $U$, each of which is open. Since $H$ is connected, there can be at most one coset, i.e., $U = H$. 
\end{proof}

There are therefore three options for $G$ depending on the kernel of $\exp_G$.
\begin{enumerate}[(1)]
    \item The kernel is trivial. In this case $G(\C)$ is isomorphic to the additive group of complex numbers $\C$.
    \item The kernel is a lattice of rank one. In this case $G(\C)$ is isomorphic to the multiplicative group of complex numbers $\C^\ast$. For example, the linear map on the tangent space $\C$ that takes the kernel of $\exp_G$ to the lattice $2\pi i \Z$ is such an isomorphism.
    \item The kernel is a lattice $\Lambda$ of rank two. In this case $G(\C)$ is isomorphic to the complex torus $\C/\Lambda$. It can be shown via the classical Weierstrass theory \cite[Proposition VI.3.6]{Silverman_the_arithmetic_of_elliptic_curves} that $G$ is isomorphic to the group of complex points of an elliptic curve.
\end{enumerate}
In each case, the isomorphism of $G(\C)$ with $\C, \C^\ast$ or $\C/\Lambda$ can be promoted to an isomorphism of $G$ with $\G_a, \G_m$ or an elliptic curve $E$ by extending the isomorphism to the closure of $G(\C)$ in some projective space, and applying Serre's GAGA theorem.

The exponential map $\exp_G$ of an algebraic group admits a local inverse, which we denote by $\log_G$. We note that $\exp_{\G_a(\C)} = \id_{\C}$ and $\exp_{\G_m(\C)}$ is the usual exponential function $\exp : \C \to \C^\ast$. Furthermore, when $G$ is an elliptic curve embedded into $\P^2$ via its Weierstrass form, then $\exp_G : \C \to G(\C) 
    \subseteq \P^2(\C)$  satisfies
\[   \exp_G(z) =(2\sigma(z)^3 \wp(z) : \sigma(z)^3 \wp'(z) : 2\sigma(z)^3) \]
for all $z \in \mathbb{C}$, where $\wp$ and $\sigma$ denote the classical Weierstrass $\wp$-function and $\sigma$-function associated to the lattice given by the kernel of $\exp_G$.

We remark that this classification of connected algebraic groups of dimension $1$ extends to any algebraically closed field in characteristic zero. This follows from the Barsotti--Chevalley--Rosenlicht theorem, see \cite[Theorem 10.25]{Milne_algebraic_groups}.

If $G$ is an algebraic group, then a closed subvariety of $G$ is called an algebraic subgroup if it is an algebraic group with the same group operation. In particular, we require algebraic subgroups to be closed, but not irreducible.

Given algebraic groups $G$ and $H$, a morphism from $G$ to $H$ is a morphism of the underlying varieties that also induces a group homomorphism from $G(\C)$ to $H(\C)$. For example, every morphism from $\G_m$ to $\G_m$ is given by sending $x$ to $x^n$ for some $n \in \Z$. Moreover, every morphism from $\G_m$ to $\G_a$ is trivial; that is, it sends $x$ to $0$. The kernel of a morphism of algebraic groups is an algebraic subgroup. Moreover, if $\dim(G) = \dim(H) = 1$, then any morphism of algebraic groups from $G$ to $H$ is either trivial or it is surjective with finite kernel. This follow from Proposition \ref{structure_of_morphisms_of_1_dim_Lie_groups} upon observing that the kernel is discrete in the analytic topology and closed in the Zariski topology. Morphisms of algebraic groups that are surjective with finite kernel are called \emph{isogenies}. Moreover, $G$ and $H$ are called \emph{isogenous} if there is an isogeny from $G$ to $H$. Thus, any morphism of algebraic groups of dimension $1$ is either trivial or an isogeny. If two connected algebraic groups of dimension $1$ are isogenous, then they are either isomorphic or they are elliptic curves. 
We will use the following well-known fact about complex algebraic groups. 
\begin{lem}\label{lemtangent} Let $G, H$ be one dimensional complex connected algebraic groups and $\mathcal{C} \subsetneq G \times H$ an algebraic correspondence. Suppose that there is a one dimensional vector space $V \subsetneq \mathbb{C}^2, b \in \mathbb{C}^2$ and  a non-empty open set $U \subseteq \mathbb{C}^2$, such that $U \cap (V + b) $ is non-empty and
    $$\exp_{G \times H}(U \cap (V + b)) \subseteq \mathcal{C}(\mathbb{C}).$$
    Then $\mathcal{C}$ is a translate of an algebraic group. 
\end{lem}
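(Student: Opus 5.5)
The approach is analytic continuation plus closedness of the Zariski closure under a group action. Let $\exp = \exp_{G\times H} : \C^2 \to (G\times H)(\C)$, a surjective homomorphism of complex Lie groups with discrete kernel. I will assume $\mathcal{C}$ is an irreducible curve, as correspondences are. Set
\[ W = \exp(V+b), \]
an immersed one-parameter coset $\exp(b) + \exp(V)$ in $G\times H$.

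\textbf{Step 1: all of $V+b$ maps into $\mathcal{C}$.} Work locally near a point $p$ of $\exp(U\cap(V+b))$. The variety $\mathcal{C}$ is cut out near $p$ by finitely many algebraic functions $f_j$, regular on an affine neighbourhood. Each composite $f_j\circ\exp$ restricted to the line $V+b \cong \C$ is holomorphic wherever defined, and it vanishes on the non-empty open subset $U\cap(V+b)$ of that line.

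To globalise, use that $\mathcal{C}(\C)$ is closed in the analytic topology. Let $T = \{t\in \C : \exp(b+tv)\in\mathcal{C}\}$, where $v$ spans $V$. Then $T$ is closed. It is also open: near any $t_0\in T$, choose affine charts around $\exp(b+t_0v)$. The functions $f_j(\exp(b+tv))$ are holomorphic near $t_0$ and vanish at $t_0$. Vanishing at a point is not enough, so one must argue via the identity theorem on the connected line, using the set of $t$ where all derivatives of all local defining functions vanish. That set is open by power series and closed by continuity and the local finiteness of charts. It is non-empty since it contains $U\cap(V+b)$. Connectedness of $\C$ then gives $T=\C$. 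Hence $W\subseteq\mathcal{C}(\C)$.

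\textbf{Step 2: the Zariski closure of $W$ is a coset.} Let $Z$ be the Zariski closure of $\exp(V)$, which is a subgroup. Since $\exp(V)$ is a subgroup, its Zariski closure is an algebraic subgroup: closure commutes with translation by elements of $\exp(V)$, and a standard two-step argument extends this to elements of $Z$. The closure of $\exp(V)$ is irreducible because it is the closure of the image of the connected set $V$. The only point needing care is that an analytic image of a connected set has irreducible Zariski closure; this holds because if it lay in $X_1\cup X_2$, the identity theorem would force it into one of them.

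Then $\overline{W} = \exp(b)+Z$ is a translate of a connected algebraic subgroup $Z$. Also $\dim Z\ge 1$, since $\exp(V)$ is infinite: the kernel of $\exp$ is discrete and $V$ is a line. Finally $\overline{W}\subseteq \mathcal{C}$ because $\mathcal{C}$ is closed.

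\textbf{Step 3: conclude.} Since $\mathcal{C}\subsetneq G\times H$ is an irreducible curve, we have $\dim\mathcal{C}=1$. The coset $\exp(b)+Z$ is an irreducible one-dimensional closed subset of $\mathcal{C}$, so it equals $\mathcal{C}$. Therefore $\mathcal{C}$ is a translate of the algebraic subgroup $Z$.

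The main obstacle I expect is Step 1, the analytic continuation across the whole line. The defining equations are only local, so the argument has to be phrased through the closed set $T$ together with the identity theorem in local charts. This should go through because $\mathcal{C}$ is a closed analytic subvariety of the complex manifold $G\times H$, and the pullback of such a subvariety to a connected curve either is everything or is discrete.
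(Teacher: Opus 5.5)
Your proof is correct, but it takes a different route from the paper's. The paper never continues analytically along the whole line. After translating so that $b=0$ and $0\in U$, it notes that for small $x\in V$ the curves $\exp(x)+\mathcal{C}$ and $\mathcal{C}$ share the infinite set $\exp\bigl((U\cap V)\cap((U\cap V)+x)\bigr)$, so by irreducibility $\exp(x)+\mathcal{C}=\mathcal{C}$. Hence the stabiliser $\mathrm{Stab}(\mathcal{C})$ is an infinite algebraic subgroup lying inside the irreducible curve $\mathcal{C}$, and so it equals $\mathcal{C}$. That argument uses only local information near one point, together with the algebraicity of stabilisers and the fact that two irreducible curves sharing infinitely many points coincide. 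You instead push the local containment out to all of $\exp(V+b)$ via the identity theorem. You then use that the Zariski closure $Z$ of the one-parameter subgroup $\exp(V)$ is a connected algebraic subgroup, and finish with a dimension count. In your approach Step 1 is genuinely necessary and not just convenient. Without it you would only get $\mathcal{C}\subseteq \exp(b)+Z$, and $Z$ can be all of $G\times H$, for example when $V$ is spanned by $(1,\sqrt{2})$ in $\mathbb{G}_m^2$. The global containment is exactly what forces $\dim Z=1$. Your version buys an explicit identification of the subgroup as $\overline{\exp(V)}$, with tangent space $V$, and makes visible where one-dimensionality of $\mathcal{C}$ enters. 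The paper's buys brevity and avoids any global analytic argument. One small point of phrasing: in Step 1, the set on which to run the open--closed argument is the interior of $T$, equivalently the parameters where all local defining functions vanish identically near that point along the line, not $T$ itself. You say this, but you should drop the earlier sentence asserting that $T$ is open.
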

We will provide a proof for the benefit of the reader. 
\begin{proof} We can translate $\mathcal{C}$ by $P = \exp_{G \times H}(b)$ so that we can assume that $b = 0$ and $U$ is a neighbourhood of the identity. Now $\exp_{G \times H}(U \cap V) \subseteq \mathcal{C}(\mathbb{C})$. There is an open non-empty set $U' \subseteq U $, such that $U + x\cap U $ is non-empty for all $x \in U'$. Thus $\exp(U') \subsetneq \text{Stab}(\mathcal{C}) = \{P \in \mathcal{C}(\mathbb{C}); P + \mathcal{C} = \mathcal{C}\}$. Since the stabiliser $\text{Stab}$ is an algebraic variety and $\mathcal{C}$ is irreducible $\mathcal{C} = \text{Stab}(\mathcal{C})$. We also note that the stabilizer is a group and thus an algebraic subgroup of $G \times H$.
\end{proof}

\subsection{Degrees}\label{degrees}

Given an algebraic group $G$ and some subvariety $\mathcal{V}$ of $G$, we will often need to define the degree of $\mathcal{V}$. It is worth mentioning that our varieties will always be pure dimensional and otherwise we talk about Zariski-closed sets. In order to define the degree of $\mathcal{V}$, we need a map from from $G$ to projective space $\mathbb{P}^n$ for some $n \in \mathbb{N}$. Such maps are parameterised by line bundles.

Thus, let $G$ be an algebraic group of dimension $1$. If $G$ is $\G_a$ or $\G_m$, we fix a canonical open immersion $G \to \P^1$. In this case, the Zariski closure $\overline{G}$ of $G$ in $\mathbb{P}^1$ satisfies $\overline{G} = \mathbb{P}^1$. We let $L_G$ denote the line bundle $O_{\P^1}(1)$ on $\P^1$. If $G$ is an elliptic curve, let $L_G$ be the ample line bundle $O_G(O)$, where $O$ is the identity of $E$. Moreover, in the case of elliptic curves, we have $\overline{G} = G$.

On a product of algebraic groups $G_1 \times \dots \times G_g$, we will always use the line bundle
\begin{align*}
  L=   (\pi_1^\ast L_{G_1}) \otimes \dots \otimes
    (\pi_g^\ast L_{G_g}),
\end{align*}
where for each $i \in \{1, \dots, g\}$, the map
\begin{align*}
    \pi_i : \overline{G_1} \times
    \dots \times
    \overline{G_g} \to \overline{G_i}
\end{align*}
is the projection morphism, and $\pi_i^\ast$ is the pullback morphism on line bundles. 

With the line bundle $L$ fixed, the \emph{degree} $\deg_L(\mathcal{V})$ of a (quasi-projective) subvariety $\mathcal{V}$ of dimension $n$ in $\overline{G}_1 \times \dots \times \overline{G}_g$ is the intersection product 
\begin{align*}
    \deg_L(\mathcal{V}) = c_1(L)^n \cdot [V],
\end{align*}
where $c_1(L)$ is the first Chern class of $L$. For the definition of the intersection product and Chern classes, see \cite[Chapter 2.5]{Fulton}. A viewpoint requiring less machinery is that a multiple of $L$ is very ample and gives an embedding into projective space. The degree of $\mathcal{V}$ is then the degree of the image of the embedding.


        

\begin{remark} \label{exga}
If $G$ is $\G_a$ or $\G_m$ and $\mathcal{V}$ is a hypersurface defined by a single polynomial, the above definition somewhat closely resembles an intuitive definition of the degree of polynomial. In particular, letting $X_1, \dots X_g$ denote the cartesian coordinates of $G^g$, we view the hypersurface $\mathcal{V}$ as a subvariety of $\mathbb{A}^g$ defined by some polynomial 
\[ f_\mathcal{V}= 
\sum_{\vec{\lambda} \in A}c(\vec{\lambda})X^{\vec{\lambda}},\]
where $A$ is some finite, non-empty subset of $\Z_{\geq 0}^g$, $c(\vec{\lambda}) \neq 0$ for all $\vec{\lambda} \in E$, and $X^{\vec{\lambda}} = X_1^{\lambda_1} \dots X_g^{\lambda_g}$. Then it turns out that $\deg_L(\mathcal{V}) = j_1 + \dots +j_g$, where for each $1 \leq i \leq k$, the number $j_i$ is the largest non-negative integer $j$ such that there is a monomial $X^{\vec{\lambda}}$, with $\vec{\lambda} \in E$, which is divisible by $X_i^{j}$. This can be strictly larger than the usual total degree of a polynomial, which is defined as the largest degree of a monomial with non-zero coefficient.
\end{remark}

\subsection{Correspondences}

We first make the concept of a correspondence precise.
\begin{definition}
    Let $X$ and $Y$ be irreducible curves. A correspondence $\mathcal{C}$ between $X$ and $Y$ is an irreducible curve $\mathcal{C} \subseteq X \times Y$ such that the canonical projections $\pi_X: \mathcal{C} \to X$ and $\pi_Y: \mathcal{C} \to Y$ are dominant. Moreover, for any set $A \subseteq X(\C)$, we define 
    \[ \mathcal{C}(A) = \{ \pi_Y( \pi_X^{-1}(x)) : x \in A\} =  \pi_Y( \mathcal{C} \cap (A \times Y) ).\]
\end{definition}

 Here, we recall that the projection $\pi_X$ is finite if for every $x \in X$, the set $\pi_X^{-1}(x) = \{ z \in \mathcal{C}(\C) : \pi_X(z) = x\}$ is finite. Since all curves involved are irreducible this is equivalent to $\pi_X$ being dominant.  We recall that the projection $\pi_X$ is dominant if $\pi_X(\mathcal{C}(\C))$ is dense in $X$. Note that here we use the fact that $\pi_X(\mathcal{C}(\C))$ is either finite, empty or co-finite; this is not true for an arbitrary dense set $A \subseteq X(\C)$.

If $A \subseteq X(\C)$ is a finite set, then $\mathcal{C}(A)$ is also finite by our assumptions on the dimensions of $X, Y$ and $\mathcal{C}$. If $d_X$ and $d_Y$ are the degrees of the projection maps $\pi_X$ and $\pi_Y$, then since $X,Y,\mathcal{C}$ are irreducible algebraic curves, $d_X, d_Y$ are equal to the maximal cardinality of a fibre. Thus $|\mathcal{C}(A)| \leq d_X |A|$ and
\begin{align*}
    \frac{1}{d_Y} |A| \leq |\mathcal{C}(A)|
\end{align*}
if $A$ lies in the image of $\pi_Y$. In particular, the lower bound holds for all finite sets $A$ when $\pi_Y$ is surjective.

Let $L_X$ and $L_Y$ be line bundles on projective varieties $X$ and $Y$, respectively, and let $L = \pi_X^\ast L_X \otimes \pi_Y^\ast L_Y$. If $\mathcal{C}$ is a correspondence between $X$ and $Y$, then
\begin{align*}
    \deg_L(\mathcal{C}) =
    \deg_{L_X}((\pi_X)_\ast [\mathcal{C}]) +
    \deg_{L_Y}((\pi_Y)_\ast [\mathcal{C}]) =
    d_X \deg_{L_X}(X) + d_Y \deg_{L_Y}(Y)
\end{align*}
by the projection formula. Thus if $L_X$ and $L_Y$ are ample, then
\begin{align*}
    |\mathcal{C}(A)| \asymp_{\deg_L(\mathcal{C})} |A|
\end{align*}
for all finite sets $A$. In particular this will be true in our setup, described above in Section \ref{degrees}.

We will be working with correspondences between algebraic groups. 
If there is a correspondence between algebraic groups $G$ and $H$ that is the translate of an algebraic group, then $G$ and $H$ are isogenous. Let us now give two intuitive examples. 

\begin{example}
    We can fix a rational map $\varphi: X \rightarrow Y$ that is well-defined on an open $U \subseteq X$ and then consider $\mathcal{C}$ to be the Zariski-closure of the graph of $\varphi$. Bremner's question, discussed in the introduction, concerns the case where $X$ is an elliptic curve in Weierstrass form $y^2 = x^3 + ax + b$, $U = X \setminus \{O\}$ is $X$ without its point at infinity, $Y = \G_a$, and $\varphi(x, y) = x$. If $A \subseteq U(\C)$ is a finite set, then $\mathcal{C}(A)$ is the set of all $x$-coordinates occuring among points of $A$.
\end{example} 

\begin{example}
    Let $\varphi$ be a polynomial of degree $d \geq 1$. We can consider a correspondence between $\G_m \times \G_m$ whose complex points are given by $\{(x, \varphi(x)) : x \in \G_m(\mathbb{C}) \}$. One can see that the degree of this correspondence is $d + 1$. Moreover, $\mathcal{C}$ is a translate of an algebraic subgroup if and only if $\varphi$ is of the form $c x^d$ for some $c \in \C$. This is precisely the correspondence that we use for our deduction of Corollary \ref{shiftedbc} from Theorem \ref{thmdelta}.
\end{example}

\subsection{Mordell--Lang and $S$-unit equations. } \label{mordelllangsunit}

We recall here the deep results of Laurent, David--Philippon and Evertse--Schlickewei--Schmidt on the Mordell--Lang conjecture.  

\begin{thm}\label{ML}\cite{DavidPhilippon,Laurent,ESSlinear}  For any positive integers $d, g$, there exists a constant $C = C(d,g) \in \mathbb{N}$ with the following property. Suppose $G$ is an elliptic curve or $\mathbb{G}_m$. Let $\mathcal{V}\subseteq G^g$ be an algebraic variety of degree $d$ and $\mathcal{V}^{co}$ be 
$$\mathcal{V}^{co} = \mathcal{V}\setminus  \bigcup_{R + B \subset \mathcal{V}, \dim (B) > 0} (R + B),$$
where $R$ runs through points in $G^g$ and $B$ through connected algebraic subgroups. Then 
$$|\mathcal{V}^{co} \cap \Gamma|\leq C^{ 1 + r} $$
for any subgroup $\Gamma \subseteq G^g(\C)$ of rank $r$. More generally, one has
\begin{align*}
    \mathcal{V}\cap \Gamma = \bigcup_{i = 1}^{C^{1+r}} (\gamma_i+ H_i)\cap \Gamma,
\end{align*}
 where $\gamma_1, \dots, \gamma_{C^{1+ r}}$ are elements of $\Gamma$, and $H_1, \dots, H_{C^{1+r}}$ are connected subgroups of $G^g$ whose degrees are bounded in terms of $d$.
\end{thm}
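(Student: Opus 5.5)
The plan is to reduce the statement to the uniform point-counting results in the cited works. The structural union decomposition is then obtained by induction on $\dim \mathcal{V}$. For the first assertion, the bound on $|\mathcal{V}^{co}\cap\Gamma|$, I would argue according to $G$. When $G=\mathbb{G}_m$, I would invoke Evertse--Schlickewei--Schmidt \cite{ESSlinear}. Write $\mathcal{V}$ as the common zero set of at most $O_{d,g}(1)$ polynomials with $O_{d,g}(1)$ monomials each. A point of $\mathcal{V}^{co}\cap\Gamma$ gives a solution of the linear equations $\sum_j c_j x_j=0$ with $x_j$ in the division group of $\Gamma$, which has rank at most $gr$. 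Points lying outside all positive-dimensional cosets in $\mathcal{V}$ yield non-degenerate solutions, after partitioning each equation into minimal vanishing subsums. By \cite{ESSlinear}, the number of such projective solutions is at most $\exp(O_{d,g}(1)(r+1))$. When $G$ is an elliptic curve, the corresponding uniform bound $c(g,d)^{1+r}$ for points of $\Gamma$ off the special locus is exactly the main theorem of David--Philippon \cite{DavidPhilippon} for powers of elliptic curves. I would quote it directly, noting that the degree of $\mathcal{V}$ with respect to the line bundle $L$ of \S\ref{degrees} is comparable to the degree used there.

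For the decomposition, I would induct on $\dim\mathcal{V}$, allowing $\mathcal{V}$ to be a Zariski closed set of bounded degree with boundedly many components. First I would show that the set $\mathcal{H}$ of connected subgroups $B$ of positive dimension that are maximal with respect to having a translate $R+B\subseteq\mathcal{V}$ is finite. I would also show that each such $B$ has degree $O_{d,g}(1)$. For $\mathbb{G}_m^g$ this is the Bombieri--Zannier and Laurent type statement that maximal cosets in a variety of degree $d$ come from finitely many subgroups, with effective degree bounds. For elliptic powers it follows from Ax--Bogomolov type arguments; in both cases, connected subgroups correspond to saturated sublattices of $\mathrm{End}(G)^g$, and these can be bounded via their defining equations. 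For each $B\in\mathcal{H}$, consider the quotient $\pi_B:G^g\to G^g/B\cong G^{g-\dim B}$, which holds up to isogeny of bounded degree. The set $\mathcal{W}_B=\{x: x+B\subseteq\mathcal{V}\}$ projects to a closed subvariety $\pi_B(\mathcal{W}_B)$ of bounded degree and of dimension less than $\dim\mathcal{V}$. Moreover, $\pi_B(\Gamma)$ has rank at most $r$. Applying the induction hypothesis to $\pi_B(\mathcal{W}_B)\cap\pi_B(\Gamma)$ gives $C^{1+r}$ cosets downstairs. Their preimages are cosets of subgroups $B+H'$ of bounded degree, and intersecting with $\Gamma$ gives the required pieces. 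Finally, the points of $\mathcal{V}\cap\Gamma$ not lying on any coset are in $\mathcal{V}^{co}$, and each contributes a zero-dimensional piece $\gamma_i+\{0\}$. These are controlled by the first part. Adding over the boundedly many $B$ keeps the total at $C(d,g)^{1+r}$ after enlarging $C$.

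One technical point is that the coset translates should be taken in $\Gamma$. When $(R+B)\cap\Gamma\neq\emptyset$, I would replace $R$ by any point of this intersection; otherwise the piece is empty and can be dropped. A second point is that the quotient may require replacing $\Gamma$ by its image or its saturation under an isogeny. This changes neither the rank nor the form of the bound.

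I expect the main obstacle to be the uniformity in the degree-bound step: showing that the maximal subgroups $B$, and the quotient varieties $\pi_B(\mathcal{W}_B)$, have degree bounded solely in terms of $(d,g)$, independent of $G$, and in particular of the elliptic curve and its endomorphism ring. I would first try to handle this by expressing $\mathcal{W}_B=\bigcap_{b\in B}(\mathcal{V}-b)$ as an intersection of finitely many translates. Bézout's inequality with respect to $L$ then bounds its degree. For the finiteness of $\mathcal{H}$ with bounded degrees, I would fall back on the explicit statements in \cite{DavidPhilippon} and \cite{Laurent}, which already provide the decomposition with uniformly bounded subgroups.
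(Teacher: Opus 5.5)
Your treatment of the first assertion matches the paper. For $\mathbb{G}_m$ you reduce, via Laurent's partition of the monomials into minimal vanishing subsums, to the Evertse--Schlickewei--Schmidt bound, and for an elliptic curve you quote David--Philippon.

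Your partition formulation is in fact more accurate than the paper's compressed version. A single non-degenerate equation $Q_i(\gamma)=0$ only determines $\gamma$ modulo the torus cut out by the exponent differences of $Q_i$. You should, however, state the step that makes your version work. If $\gamma\in\mathcal{V}^{co}$ has partition $\mathcal{P}$, then the group $\{x : x^{\lambda-\mu}=1 \text{ whenever } \lambda,\mu \text{ lie in a common part of } \mathcal{P}\}$ is finite; otherwise $\gamma$ times its identity component would lie in $\mathcal{V}$. Its order is $O_{d,g}(1)$, so the projective solutions determine $\gamma$ up to boundedly many choices. Also, the monomial values lie in a homomorphic image of $\Gamma$, of rank at most $r$, so no division group is needed.

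For the decomposition your route is genuinely different. The paper obtains it directly. For $\mathbb{G}_m$, each partition of the support gives a subgroup, and Laurent's map turns the count of the relevant cosets into an $S$-unit count. For $E$, it is read off from David--Philippon's Th\'eor\`eme 1.13, and Bombieri--Zannier's Lemma 2 bounds the degrees. You instead derive the structure statement from the counting statement by induction on $\dim\mathcal{V}$, passing through $\pi_B(\mathcal{W}_B)\subseteq G^g/B$. This is uniform in $G$, and it shows the second assertion is a formal consequence of the first plus finiteness of the subgroups attached to maximal cosets.

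The cost is uniform control that the paper never has to touch:
\begin{enumerate}
\item[(i)] A degree bound for $\mathcal{W}_B$. Use that an arbitrary intersection of translates of $\mathcal{V}$ is cut out by equations of degree $O_{d,g}(1)$ in a fixed embedding. Iterating B\'ezout over an unbounded number of translates does not suffice.
\item[(ii)] For $E$, an identification of $E^g/B$ with $E^{g-\dim B}$ through an isogeny and quotient map whose degrees are bounded independently of $E$. For CM curves the quotient need not be a power of $E$, so this requires a Minkowski-type argument on $\mathrm{Hom}(E^g,E)$ with the degree form.
\end{enumerate}
Since your fallback for exactly this is to quote David--Philippon, in the elliptic case you end up where the paper does.

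One correction: $\mathcal{H}$ must be the set of $B$ occurring in maximal cosets $R+B\subseteq\mathcal{V}$, which is the Bombieri--Zannier statement you cite. It cannot be the set of subgroups maximal among those having some translate in $\mathcal{V}$; with that definition your final covering step fails. For example, take $\mathcal{V}: x_4=1+(x_3-1)(2+(x_2-2)x_1)$ in $\mathbb{G}_m^4$.
\begin{enumerate}
\item[(i)] $\mathcal{V}$ contains $\{x_3=x_4=1\}$, so $B_0=\{(t,1,1,1)\}$ is not maximal in that sense.
\item[(ii)] For generic $s$, the only positive-dimensional coset in $\mathcal{V}$ through $(s,2,3,5)$ is $\{(t,2,3,5) : t\in\mathbb{C}^*\}$, a translate of $B_0$.
\item[(iii)] Hence these points lie in no $\mathcal{W}_B$ with $B\in\mathcal{H}$, and they are not in $\mathcal{V}^{co}$ either.
\end{enumerate}
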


\begin{proof} If $G$ is an elliptic curve then this theorem follows directly from \cite[Théorème 1.13]{DavidPhilippon}. 

For $G = \mathbb{G}_m$, we first prove the first part. We fix polynomials $Q_1, \dots, Q_k \in \C[X_1, \dots, X_g]$, such that $\mathcal{V}$ is their common zero-set. Their degree is bounded by the degree of $\mathcal{V}$ and the number of non-zero monomials in $Q_i$ is bounded by $\deg(Q_i)^g$ for $i = 1, \dots, g$. If we have a point $\gamma \in \mathcal{V}\cap \Gamma$, then $Q_i(\gamma) = 0, i = 1, \dots, k$ and if $\gamma \in \mathcal{V}^{co}$, then there is at least one $i$, such that no subsum of the monomials in $Q_i$ vanishes if evaluated at $\gamma$. This follows from the proof of Laurent \cite{Laurent}. The number of solutions of $Q_i(\gamma)= 0$ with no vanishing subsum is bounded by $c(\deg(Q_i),g)^{1 +r}$ for all $i$ \cite{ESSlinear}. This gives the first claim.

For the general statement we follow the proof of Laurent \cite{Laurent}. Each maximal algebraic group contained in $\mathcal{V}$ corresponds to a partition of the support of its defining equations. Thus, their number and degree is bounded only in terms of the degree of $\mathcal{V}$. For each algebraic subgroup given by a partition, Laurent constructs a map that reduces counting the number of intersection points to the $S$-unit equation for which we can apply the main theorem in \cite{ESSlinear}. 

Finally, the fact that the degree of each $H_i$ is bounded in terms of $d$ follows from the argument in \cite[Lemma 2]{Bombieri--Zannier}. 
\end{proof}

Note that $\mathcal{V}^{co}$ might be empty, even if $\mathcal{V}$ is not a coset. An easy example is a product $\mathcal{C} \times \mathbb{G}_m \subseteq \mathbb{G}_m^3$ for a curve $\mathcal{C}$, that is covered by cosets of the form $\{P\}\times \mathbb{G}_m$.

\section{ Projecting cartesian products} \label{pcp}

The main goal of this section is to prove Theorem \ref{projections} which describes expansion properties for certain projections of varieties. 

Thus, let $G$ to be some $1$-dimensional, connected algebraic group over $\mathbb{C}$, not isomorphic to $\mathbb{G}_a$, and let $\mathcal{B}$ be a projective variety of positive dimension. Let $\pi_1 : G^g \times \mathcal{B} \to G^g$ and $\pi_2: G^g \times \mathcal{B} \to \mathcal{B}$ be the canonical projection maps.  We recall the notion of a degenerate variety as described in Definition \ref{degenvar}.

\begin{definition}
We call an irreducible subvariety $\mathcal{V} \subseteq  G^g \times \mathcal{B}$ \emph{degenerate}, if there exists a connected algebraic group $H \subseteq G^g$ of positive dimension, and a proper subvariety $\mathcal{W} \subseteq G^g/H\times \mathcal{B} $ such that 
$$\mathcal{V} = \pi_H^{-1}(\mathcal{W}), $$
for the projection $\pi_H: G^g \times \mathcal{B} \rightarrow G^g/H\times \mathcal{B}$. 
    If $\mathcal{V}$ is not degenerate, we call $\mathcal{V}$ \textit{non-degenerate}. 
\end{definition} 

With this in hand, we now state our version of Theorem \ref{elekesronyai} for sets lying in low rank subgroups.

\begin{thm}\label{projections}
Let $\mathcal{V} \subseteq G^g\times \mathcal{B}$ be a non-degenerate subvariety of dimension $g$, such that $\pi_1$ and $\pi_2$ restricted to $\mathcal{V}$ are dominant. Let $\Gamma \subseteq G(\C)$ be subgroup of rank $r$. Then for any finite set $A \subseteq \Gamma$ we have
$$|\pi_2( (A^g\times \mathcal{B}) \cap \mathcal{V})| \geq c(g,\deg(\mathcal{V}))^{1 + r}|A|^g,$$
for a constant $c = c(g,\deg(\mathcal{V})) >0 $ depending only on $g$ and $\deg(\mathcal{V})$. 
\end{thm}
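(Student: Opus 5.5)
We argue by counting fibres. For each $b \in \mathcal{B}$, let $\mathcal{V}_b = \pi_1(\pi_2^{-1}(b) \cap \mathcal{V}) \subseteq G^g$. Since $\mathcal{V}$ has dimension $g$ and $\pi_2|_{\mathcal{V}}$ is dominant onto the positive-dimensional $\mathcal{B}$, the generic fibre $\mathcal{V}_b$ has dimension $g - \dim \mathcal{B} \le g-1$, and its degree is bounded in terms of $\deg(\mathcal{V})$. By the trivial count and Cauchy--Schwarz-free pigeonholing,
\[
|A|^g \ll_{\deg\mathcal{V}} \sum_{b \in \pi_2((A^g\times\mathcal{B})\cap\mathcal{V})} |\mathcal{V}_b \cap A^g| ,
\]
where we use that $\pi_1|_{\mathcal{V}}$ is generically finite of degree $\le \deg\mathcal{V}$ (dominant, equal dimensions). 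The points of $A^g$ over which $\pi_1|_{\mathcal V}$ has positive-dimensional fibres lie in a proper subvariety $\mathcal{Z}\subsetneq G^g$ of bounded degree, and $|\mathcal{Z}\cap A^g| \le O(1)|A|^{g-1}$ by a Schwartz--Zippel/Bézout-type count. This is negligible unless $|A|$ is bounded by $c^{-(1+r)}$, where the claim is trivial. It therefore suffices to show that $|\mathcal{V}_b \cap A^g| \le C^{1+r}$ for every $b$, with $C=C(g,\deg\mathcal V)$. This gives the bound with $c = C^{-1}$ after adjusting constants.

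To bound $|\mathcal{V}_b\cap A^g|$, apply Theorem~\ref{ML} to $\mathcal{V}_b$ with the group $\Gamma^g$, which has rank $gr$. This handles $G=\mathbb{G}_m$ and elliptic curves, since $G\not\cong\mathbb{G}_a$. It yields
\[
\mathcal{V}_b \cap \Gamma^g = \bigcup_{i\le C^{1+gr}} (\gamma_i + H_i)\cap\Gamma^g ,
\]
with $H_i$ connected of bounded degree and $\gamma_i+H_i\subseteq\mathcal V_b$. The components with $\dim H_i = 0$ contribute $C^{1+gr}$ points in total, which is acceptable. The difficulty is the cosets with $\dim H_i>0$: each one contributes up to about $|A|^{\dim H_i}$ points of $A^g$. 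This is the step where non-degeneracy must be used.

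\textbf{Main obstacle.} The key step is to show that positive-dimensional cosets in fibres cannot carry a positive proportion of $A^g$. Since the $H_i$ have bounded degree, they range over a finite family $\mathcal H$ of connected subgroups, with $|\mathcal{H}| \ll_{g,\deg\mathcal V} 1$. For each $H\in\mathcal H$, let $\mathcal{V}_H$ be the Zariski closure of the union of all $(\gamma+H)\times\{b\} \subseteq \mathcal{V}$; this is a constructible set defined algebraically. If $\mathcal{V}_H = \mathcal{V}$ for some $H$, then $\mathcal V$ is a union of $H$-cosets in the fibres, hence $\mathcal{V} = \pi_H^{-1}(\mathcal W)$ with $\mathcal W=\pi_H(\mathcal V)$. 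Here $\mathcal W$ is proper, since $\dim\mathcal W=g-\dim H<\dim (G^g/H\times\mathcal B)$. This contradicts non-degeneracy.

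Hence each $\mathcal V_H$ is a proper subvariety of bounded degree. Consequently $\pi_1(\mathcal{V}_H)$ is contained in a proper subvariety $\mathcal Z_H\subsetneq G^g$, because $\pi_1|_{\mathcal V}$ is generically finite. It follows that $|A^g\cap\mathcal Z_H|\ll|A|^{g-1}$. We then remove $\bigcup_H \mathcal Z_H$ from $A^g$ at the start of the counting. On the remaining tuples, every point of $\mathcal V_b\cap A^g$ lies in a zero-dimensional Mordell--Lang component, which gives $|\mathcal{V}_b\cap (A^g\setminus \bigcup_H\mathcal Z_H)|\le C^{1+gr}$.

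Combining the two steps gives
\[
|A|^g - O(|A|^{g-1}) \le C^{1+gr}\,\big|\pi_2((A^g\times\mathcal B)\cap\mathcal V)\big| .
\]
In the case $|A|$ small relative to the $O(\cdot)$ constant, the bound is trivial since the image is nonempty. This finishes the argument.
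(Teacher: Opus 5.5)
Your proposal is correct and follows essentially the same route as the paper. Non-degeneracy forces every positive-dimensional coset lying in a fibre into a proper closed set of bounded degree (the paper's coset lemma, proved by the same ``otherwise $\mathcal V=\pi_H^{-1}(\pi_H(\mathcal V))$'' argument), whose contribution is $O(|A|^{g-1})$ by Lemma~\ref{lemmaexception}, and uniform Mordell--Lang (Theorem~\ref{ML}) then bounds each remaining fibre by $C^{1+gr}$. The one thing to state explicitly is that every tuple in $A^g$ lies in some fibre: this holds because $\pi_1|_{\mathcal V}$ is dominant and closed ($\mathcal B$ being projective), hence surjective, whereas the generic finiteness of $\pi_1$ you invoke is not what is needed there.
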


In order to prove Theorem \ref{projections}, we will require the following lemma.

\begin{lem} 
\label{cosets}
Suppose that $\mathcal{V} \subseteq G^g \times \mathcal{B}$ is a non-degenerate subvariety of dimension $g$, and suppose that the maps $\pi_1$ and $\pi_2$ are dominant. Then there exists a proper Zariski closed set $Z \subseteq G^g$, such that if there is a positive dimensional subgroup $H \subseteq G^g$ and $(P,Q) \in (G^g \times \mathcal{B})(\C)$ with 
\begin{align}\label{cosets} 
    \{(P + T, Q) : T \in H(\C) \} \subseteq \mathcal{V},
\end{align}
then $P + H \subseteq Z$. Moreover, the degree of the components of $Z$ and their number is bounded by a constant depending only on $g, \deg(\mathcal{V})$.


\end{lem}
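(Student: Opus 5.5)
Given $\mathcal V$, I would consider the locus $\mathcal{V}^{\mathrm{cos}}$ of points $(P,Q)\in\mathcal V$ through which passes a positive-dimensional coset lying inside a fibre, i.e. $(P+H)\times\{Q\}\subseteq\mathcal V$ with $H\subseteq G^g$ a connected algebraic subgroup of positive dimension. The goal is to show that $\pi_1(\mathcal V^{\mathrm{cos}})$ is not Zariski dense in $G^g$, and to take $Z$ to be its closure. The first step is to bound the candidate subgroups $H$ with a uniform degree bound. For each fixed $Q$ the fibre $\mathcal V_Q=\{P:(P,Q)\in\mathcal V\}\subseteq G^g$ has degree bounded in terms of $\deg\mathcal V$. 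The argument of Bombieri--Zannier quoted in the proof of Theorem \ref{ML} (equivalently the stabiliser argument of Lemma \ref{lemtangent}) then shows that any maximal coset $P+H\subseteq\mathcal V_Q$ has $H$ of degree $\ll_{g,\deg\mathcal V}1$. Since connected algebraic subgroups of $G^g$ of bounded degree form finitely many families, and are in fact finitely many up to the isogeny structure, only a finite list $H_1,\dots,H_m$ with $m\ll_{g,\deg\mathcal V}1$ needs to be considered.

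For each fixed $H=H_j$ I would set
\[\mathcal V_H=\{(P,Q)\in\mathcal V:(P+H)\times\{Q\}\subseteq\mathcal V\}=\bigcap_{T\in H}(\mathcal V-(T,0)).\]
This set is Zariski closed, cut out by equations of bounded degree, and it is $H$-invariant. Its image in $G^g/H\times\mathcal B$ is a closed set $\mathcal W_H$, and $\mathcal V_H=\pi_H^{-1}(\mathcal W_H)$.

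Suppose $\pi_1(\mathcal V_H)$ were dense in $G^g$. Because $\mathcal V_H\subseteq\mathcal V$ and $\dim\mathcal V=g$, some component of $\mathcal V_H$ would have dimension $g$ and hence equal the irreducible $\mathcal V$. In that case $\mathcal V=\pi_H^{-1}(\mathcal W_H)$, and $\mathcal W_H$ is a proper subvariety because $\dim\mathcal W_H=g-\dim H<\dim(G^g/H\times\mathcal B)$ since $\dim\mathcal B>0$. That says $\mathcal V$ is degenerate, a contradiction. So each $\pi_1(\mathcal V_H)$ is contained in a proper closed set. Here I would use that $\pi_1|_{\mathcal V}$ dominant with $\dim\mathcal V=g$ makes $\pi_1$ generically finite, so a proper closed subset of $\mathcal V$ has non-dense image. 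I would then put $Z=\bigcup_j\overline{\pi_1(\mathcal V_{H_j})}$. Every coset as in \eqref{cosets} lies in some $\mathcal V_{H_j}$ after enlarging $H$ to a maximal one, so $P+H\subseteq Z$.

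The number and degrees of the components of $Z$ are bounded by Bézout-type estimates, since $\mathcal V_H$ is an intersection of translates of $\mathcal V$. One can reduce this to finitely many translates by Noetherianity, with the number of translates controlled by the degree. The projection $\pi_1$ does not increase degree with respect to the product line bundle. The main obstacle is the first step: proving the uniform finiteness of the relevant $H$ with a degree bound in terms of $g$ and $\deg\mathcal V$ only, uniformly across fibres $Q$. I would try to prove it by applying the Bombieri--Zannier argument to $\mathcal V$ viewed as a family over $\mathcal B$.
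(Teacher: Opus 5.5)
Your proposal is correct and follows the paper's proof in all essentials. The paper also gets a finite list $H_1,\dots,H_\ell$ from B\'ezout on the fibres $\mathcal{V}_Q$ plus Bombieri--Zannier; the uniformity in $Q$ that you flag as the main obstacle is automatic, since the fibre degree bound depends only on $g$ and $\deg(\mathcal{V})$. It then shows, for each $H$, that the closed locus of points whose $H$-coset lies in $\mathcal{V}$ is a proper subset of $\mathcal{V}$ by non-degeneracy, and takes $Z$ to be the union of its images under the closed map $\pi_1$. The paper obtains that locus as $Z^H$ via Chevalley's semicontinuity theorem, whereas you write it more directly as $\bigcap_{T\in H}(\mathcal{V}-(T,0))$.

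The one genuine divergence is the degree bound for $Z^H$. The paper extracts it from the sub-Pfaffian complexity of the exponential map. Your B\'ezout argument for an intersection of boundedly many translates of $\mathcal{V}$, all of the same degree, is more elementary and equally valid.
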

\begin{proof} 
We first fix a connected algebraic subgroup $H$ of dimension $k$, and show that all $P$ such that $(P + H) \times \{Q\} \subseteq \mathcal{V}$ for some $Q$ are contained in a Zariski closed set $Z_H$, that depends on $H$. The lemma will be proved by taking a union of such sets $Z_H$. Let $p_H$ be the restriction of the quotient map $\pi_H : G^g \times \mathcal{B} \to (G^g/H) \times \mathcal{B}$ to $\mathcal{V}$. By Chevalley's theorem \cite[Theorem 1.3.1]{EGAIV} the set 
$$Z^H = \{y \in \mathcal{V}(\C) :  \dim(p_H^{-1}(p_H(y))) \geq k \}$$
is closed. Thus if $(P + H) \times \{Q\} \subseteq \mathcal{V}$ then $(P + H) \times \{Q\} \subseteq p_H^{-1}(p_H(P, Q))$, and so $(P, Q) \in Z^H$. 

Since $\mathcal{B}$ is projective, the projection $\pi_1 : G^g \times \mathcal{B} \to G^g$ is closed \cite[Theorem 1.11]{Shafarevic}. Therefore $Z_H = \pi_1(Z^H)$ is closed in $G^g$. If $Z_H = G^g$ then $Z^H$ has dimension $g$, and is therefore equal to $\mathcal{V}$. Also, the degree of $Z_H$ is bounded by the degree of $Z^H$, by the projection formula. It therefore suffices to show that $Z^H$ is not equal to $\mathcal{V}$ and that the degree of $Z^H$ is bounded in terms of $g$ and $\deg(\mathcal{V})$.

We will first prove that $Z^H \neq \mathcal{V}$, and so, suppose that $Z^H = \mathcal{V}$. Then consider $\mathcal{W}$, the Zariski closure of $\pi_H(\mathcal{V})$ and $Z' = \pi_H^{-1}(\mathcal{W})$, which is a subvariety of $G^g \times \mathcal{B}$ containing $\mathcal{V}$. Firstly, $\mathcal{W}$ is irreducible, because $\mathcal{V}$ is irreducible. Since $H$ is connected, the fibres of $\pi_H$ are irreducible, and so \cite[Theorem 1.26]{Shafarevic} implies that $Z'$ is irreducible.  
Since $\pi_H(\mathcal{V})$ is constructible it contains $U$ that is  Zariski--open (dense) in $\mathcal{W}$. We thus have that $Z' = \pi_H^{-1}(U) \cup \mathcal{E} $, where $\mathcal{E}$ is the a finite union of irreducible subvarieties $E = \pi_{H}^{-1}(E')$, with $E'$ running over all irreducible components of $\mathcal{W}\setminus U$. By the fibre dimension theorem $\dim(E) < \dim(Z')$ for all $E$. Since $\pi_H^{-1}(U)\subset \mathcal{V}$, we have that $Z' \subseteq \mathcal{V}\cup \mathcal{E}$, and a dimension count shows that $\dim(\mathcal{V}) = \dim(Z')$. Since both $\mathcal{V}$ and $Z'$ are irreducible $\mathcal{V} = Z'$.  
This means that 
$$\mathcal{V} = \pi_H^{-1}(\mathcal{W}) $$
and so $\mathcal{V}$ is degenerate. This contradicts our assumption on $\mathcal{V}$.

We will now prove that the degree of $Z^H$ is bounded in terms of $g$ and $\deg(\mathcal{V})$. For $g \geq 1$,  define $\exp_{G^g}$ the exponential of $G^g$ at the identity and $\mathcal{F}$ a suitably chosen fundamental domain for $\exp_G$. The graph $\exp_G$ restricted to $\mathcal{F}$ is a sub-Pfaffian set of complexity bounded by an absolute (effectively computable) constant, see work of Jones and the third author \cite{pfaffian}. Each algebraic group $H$, corresponds to a vector space $T_H$, such that $\exp_{G^g}(T_H) = H$. We then consider the set 
$$T^H = \{\gamma \in \mathcal{F}^g : \text{there exists } b \in \mathcal{B}(\C) \ \text{such that} \ \exp_{G^g}(\gamma  + T_H) \subseteq \mathcal{V} \cap ( G^g \times \{b\}) \},$$
which is a sub-Pfaffian set of complexity $c_{\text{comp}}$, where $c_{\text{comp}}$ depends only on $\deg(\mathcal{V})$.  We then have 
$Z^H = \exp_{G^{g}}(T^H)$, which has also bounded complexity, and it is a closed algebraic variety. The complexity of $Z^H$ bounds its degree, and so, we have proven this claim as well.

Finally, suppose $P + H$ is a maximal translate lying in the fibre $\mathcal{V}_Q = \pi_2^{-1}(Q)\cap \mathcal{V}$. Note that $\deg(\mathcal{V}_Q) \ll_g \deg(\mathcal{V})$ by B{\' e}zout's theorem. By an argument of Bombieri--Zannier \cite[Lemma 2]{Bombieri--Zannier}, if $H$ is an algebraic subgroup appearing in a maximal translate of $\mathcal{V}_Q$, then $H$ belongs to a finite set $\{H_1, \dots, H_\ell\}$ with $\ell \ll_{g, \deg(\mathcal{V})} 1$. The lemma is proved upon taking $Z = Z_{H_1} \cup \dots \cup Z_{H_\ell}$.
\end{proof}
In order to prove Theorem \ref{projections}, we combine Lemma \ref{cosets} with the estimates coming from uniform Mordell--Lang (Theorem \ref{ML}). In order to get control on the contribution of the closed set $Z$ from Lemma \ref{cosets}  we need the following Schwartz--Zippel type estimate. 
\begin{lem}\label{lemmaexception} Let $Z \subseteq G^g$ be an algebraic sub-variety. Then for any finite set $A \subseteq G(\C)$
$$|Z \cap A^{g}|\ll_{g, \deg(Z)}|A|^{\dim(Z)}.  $$
\end{lem}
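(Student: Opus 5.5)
Induction on $g$ gives the bound; only the finiteness of fibres of a curve over a point and a degree bound under coordinate projections are needed. Since the bound depends only on $\deg(Z)$ and $g$, we may first pass to irreducible components. Their number and degrees are bounded in terms of $\deg(Z)$ by Bézout-type considerations for the fixed line bundle $L$ of \S\ref{degrees}. So we may assume $Z$ is irreducible of dimension $n$. The statement is a Schwartz--Zippel/combinatorial-Nullstellensatz type bound. We induct on $g$, and for fixed $g$ on $n=\dim(Z)$. The cases $n=0$ and $n=g$ are trivial: a $0$-dimensional $Z$ has at most $\deg(Z)$ points, and for $n=g$ we use $|A^g|=|A|^g$.

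For $g\ge 1$ and $0<n<g$, consider the projection $p:G^g\to G^{g-1}$ onto the first $g-1$ coordinates. Let $W$ be the Zariski closure of $p(Z)$; its degree is bounded in terms of $\deg(Z)$ and $g$ by the projection formula, as already used for correspondences in \S3.3. We split into two cases.

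\emph{Case 1: the generic fibre of $p|_Z$ is finite.} Then $\dim W=n$. There is a proper closed subset $W'\subsetneq W$, of bounded degree (by the upper semicontinuity/Chevalley argument as in Lemma \ref{cosets}), outside of which the fibres $p^{-1}(w)\cap Z$ are finite of size at most $\deg(Z)$. A fibre is the intersection of $Z$ with $\{w\}\times G$, so by Bézout its cardinality is bounded by $\deg(Z)$. Over $W'$ the preimage $Z\cap p^{-1}(W')$ is a proper closed subset of $Z$, hence of dimension $\le n-1$, and has bounded degree; we handle it by the induction on $n$. Over $W\setminus W'$ we get at most $\deg(Z)\,|W\cap A^{g-1}|\ll |A|^{n}$ points, by induction on $g$ applied to $W\subseteq G^{g-1}$.

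\emph{Case 2: the generic fibre is all of $G$.} Then $\dim W=n-1$, so $Z\subseteq W\times G$ and $|Z\cap A^g|\le |W\cap A^{g-1}|\cdot|A|\ll |A|^{n-1}\cdot|A|$, again by induction on $g$.

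The main obstacle is bookkeeping the degrees uniformly: we must show that the images, exceptional loci, and components have degrees bounded purely in terms of $g$ and $\deg(Z)$, for the specific line bundle $L$ on $\overline{G}^g$. One also has to work with $\overline{G}$ rather than $G$, which is harmless since $A\subseteq G(\C)$. For the degree of $p(Z)$ I will use the projection formula, $\deg_{L'}(p_*[Z])\le \deg_L(Z)$. For the exceptional locus $W'$ I will take it to be the set where the fibre of $Z$ over $w$ contains $\{w\}\times\overline{G}$. This set is cut out by intersecting $Z$ with translates of the hypersurfaces $\overline{G}^{g-1}\times\{a\}$ for $\deg(Z)+1$ generic points $a$, which gives a degree bound via Bézout.
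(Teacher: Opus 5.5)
Your argument is correct in substance, but it takes a different route from the paper.

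\textbf{The paper's route.} The paper inducts only on $\dim Z$, and it slices rather than projects. Since $\dim Z\ge 1$, some coordinate projection $\overline{Z}\to\overline{G}$ is non-constant and hence surjective; say it is the first. Then each slice $\overline{Z}\cap(\{a\}\times\overline{G}^{g-1})$ has dimension $\dim Z-1$ and degree $\ll_g\deg Z$ by Bézout. Summing over $a\in A$ gives $|A|\cdot|A|^{\dim Z-1}$. This uses nothing beyond Bézout for a coordinate hypersurface section.

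\textbf{Your route.} Your fibration $p:G^g\to G^{g-1}$ instead needs a double induction on $(g,n)$, a bound on finite fibres, an exceptional locus $W'$, and a degree bound for the image $W$. All of these go through:
\begin{itemize}
\item finite fibres are bounded by refined Bézout;
\item $W'=\bigcap_j p\bigl(Z\cap(\overline{G}^{g-1}\times\{a_j\})\bigr)$ for any $B+1$ distinct $a_j$, where $B$ bounds the finite fibres, so $W'$ is closed of bounded degree;
\item $Z\cap p^{-1}(W')=W'\times\overline{G}$ has dimension at most $n-1$.
\end{itemize}

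\textbf{Two small corrections.}
\begin{itemize}
\item The projection formula bounds $\deg W$ only in Case 1. In Case 2 one has $p_*[Z]=0$, so there you should use $Z=W\times\overline{G}$ and $Z\cap(\overline{G}^{g-1}\times\{a\})=W\times\{a\}$ together with Bézout.
\item The finite-fibre bound is $\ll_g\deg Z$ rather than literally $\deg Z$ (e.g.\ for elliptic curves, where $O_G(O)$ is not very ample). So take correspondingly many points $a_j$ when defining $W'$.
\end{itemize}

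\textbf{Comparison.} Your approach yields a little more structure, since the bad locus is explicitly a product $W'\times\overline{G}$. For this lemma, though, the paper's slicing is shorter and avoids all degree bookkeeping for projected images and exceptional loci.
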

\begin{proof}
We prove this by induction on the dimension. We may suppose that $Z$ is irreducible, since we can argue component wise. We can also pass to the closure $\overline{Z}$ of $Z$ in $\overline{G}^g$. If $\dim(Z) = 0$, this is trivial. So assume that $\dim(Z) \geq 1$. We can choose a factor $\overline{G}$ in $\overline{G}^g$ such that the projection from $\overline{Z}$ to $\overline{G}$ is surjective. Without loss of generality, we can assume this is the first factor. Then the intersection $\overline{Z} \cap (\{a\}\times \overline{G}^{g-1})$ has dimension equal to $\dim(Z) -1$. By B{\' e}zout's theorem 
\[ \deg(\overline{Z} \cap (\{a\}\times \overline{G}^{g-1}) )
\leq \deg(Z) \deg(\overline{G}^{g - 1})
\ll_{g}\deg(Z).\]
We then conclude by induction that
$$|Z\cap A^g|\leq \sum_{a \in A}|A^g \cap (\{a\}\times \overline{G}^{g-1})\cap \overline{Z}|\ll_{g, \deg(Z)}|A||A|^{\dim(Z) -1}. \qedhere$$
\end{proof}

We are now ready to prove Theorem \ref{projections}.

\begin{proof}[Proof of Theorem \ref{projections}] As $\mathcal{V}$ is non-degenerate, a coset contained in a fibre $\mathcal{V}_Q = \pi_2^{-1}(Q)\cap \mathcal{V}$ is contained in a closed set $Z \subseteq \mathcal{V}$ not depending on $Q$. This is Lemma \ref{cosets}. We set $A' = A^g \setminus Z(\C)$ and by Lemma \ref{lemmaexception}, $|Z \cap A^g| \ll_{\deg(\mathcal{V})}|A|^{g-1} $. Since the projection $\pi_1$ from $\mathcal{V}$ to $G^g$ is dominant and $\mathcal{B}$ is projective, $\pi_1$ is actually surjective, because it is closed \cite{Shafarevic}. Hence for each point $a \in A'$, there is a point $b \in B$ such that $(a,b) \in (A'\times \mathcal{B})\cap \mathcal{V}$. On the other hand, it follows from Theorem \ref{ML} that for every $b \in \mathcal{B}(\C)$, one has
$$|(A'\times\{b\})\cap \mathcal{V}| \leq c(r,g, \deg(\mathcal{V}))^{r + 1}$$
Thus the image $\pi_2((A^g\times \mathcal{B}) \cap \mathcal{V})$ contains at least 
$$c(r, g, \deg(\mathcal{V}))^{-1-r}(|A|^g - c'|A|^{g-1})$$
elements for a constant $c' > 0$ depending only on $\deg(\mathcal{V})$ and $g$, which finishes the proof of Theorem \ref{projections}.
\end{proof}

\section{Correspondences and cosets}\label{sectioncorr}

In this section we construct a variety $\mathcal{V}_{\text{sum}}$ with the property that $\pi_2((A^g \times H) \cap \mathcal{V}_{\text{sum}})$ is roughly the sumset $\mathcal{C}_1(A) + \dots + \mathcal{C}_g(A)$, for correspondences $\mathcal{C}_1, \dots, \mathcal{C}_g$ between algebraic groups $G$ and $H$. We would like to apply Theorem \ref{projections} to $\mathcal{V}_{\text{sum}}$, and so this section is dedicated to showing that this variety is non-degenerate, in the sense of Definition \ref{degenvar}.

Now let $G$ and $H$ be connected algebraic groups of dimension 1, such that $G$ is not isomorphic to $\mathbb{G}_a$. 
We compactify $H$ as described at the beginning of to section \ref{degrees}. Thus  $\overline{H}$ is either $\mathbb{P}^1$ or an elliptic curve, depending on whether $H$ is isomorphic to $\G_a$, $\G_m$ or to an elliptic curve. We also consider the $g$-fold sum map on $H$
\begin{align*}
    p_{\text{sum}}: H^g &\rightarrow H \\ (Q_1, \dots, Q_g) &\mapsto Q_1 + \cdots + Q_g,
\end{align*}
its graph $\Gamma(p_{\text{sum}}) \subseteq H^g \times H$, and its closure $\overline{\Gamma(p_{\text{sum}})}$ in $\overline{H}^{g + 1}$. 

Let $\mathcal{C}_1, \dots, \mathcal{C}_g \subseteq G \times \overline{H}$ be correspondences, none of which is the translate of an algebraic subgroup. We set 
$$ \mathcal{V}^* = \{(P_1, \dots, P_g, Q_1, \dots, Q_{g+1}): (Q_1, \dots, Q_{g+1}) \in \overline{\Gamma(p_{\text{sum}})}, (P_i, Q_i) \in \mathcal{C}_i\}, $$
which is an irreducible variety. The projection $\pi_{G^g \times \overline{H}}: G^g\times\overline{H}^{g + 1} \rightarrow G^g \times \overline{H}$ onto $G^g$ and the last coordinate of $\overline{H}^{g + 1}$ is a closed map by \cite[Theorem 1.11]{Shafarevic}. We set 
\begin{equation} \label{Vsumdef}
    \mathcal{V}_{\text{sum}} = \pi_{G^g\times\overline{H}}(\mathcal{V}^*)
\end{equation}
which is an irreducible variety since it is the image of an irreducible variety under a closed map. Thus the role of the projective variety $\mathcal{B}$ in Section \ref{pcp} is played by $\overline{H}$.

Our main goal in this section is to prove the following. 

\begin{proposition}\label{nondegenerate} The variety $\mathcal{V}_{\rm sum} \subseteq G^g\times\overline{H}$ is non-degenerate of dimension $g$. The projection $\pi_1 : \mathcal{V}_{\rm sum} \to G^g$ is surjective and the projection $\pi_2 : \mathcal{V}_{\rm sum} \to \overline{H}$ is dominant.  
\end{proposition}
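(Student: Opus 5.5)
First the easy parts. $\mathcal{V}^*$ is irreducible of dimension $g$: over a generic point $(P_1,\dots,P_g)\in G^g$ there are finitely many choices of $(Q_1,\dots,Q_g)$ with $(P_i,Q_i)\in\mathcal{C}_i$, and then $Q_{g+1}$ is determined by the closure of the graph of $p_{\rm sum}$. Hence $\dim \mathcal{V}_{\rm sum}\le g$. Since $\pi_1\colon\mathcal{V}_{\rm sum}\to G^g$ factors through the projection of $\mathcal{V}^*$, which is finite and dominant onto $G^g$, equality holds: $\dim\mathcal{V}_{\rm sum}=g$.

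For surjectivity of $\pi_1$, I would use that each $\mathcal{C}_i\subseteq G\times\overline{H}$ with $\overline{H}$ projective maps properly onto $G$. Its image is closed and dominant, hence all of $G$. Then for any $(P_i)$ pick $Q_i$ with $(P_i,Q_i)\in\mathcal{C}_i$ and take a point of $\overline{\Gamma(p_{\rm sum})}$ over $(Q_1,\dots,Q_g)$; this exists because the projection of the closure to $\overline{H}^g$ is closed and dominant. Dominance of $\pi_2$ holds because the $Q_i$ vary over open subsets of $\overline{H}$ and their sum is non-constant.

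The main point is non-degeneracy. Suppose $\mathcal{V}_{\rm sum}=\pi_{H'}^{-1}(\mathcal{W})$ for some connected algebraic subgroup $H'\subseteq G^g$ of positive dimension. Then for a generic point $(P,Q)\in\mathcal{V}_{\rm sum}$ we have $(P+T,Q)\in\mathcal{V}_{\rm sum}$ for all $T\in H'$. I would pass to tangent spaces via $\exp_{G^g}$. Let $T_{H'}\subseteq\mathbb{C}^g$ be the Lie algebra of $H'$, and choose a nonzero vector $v\in T_{H'}$ with, without loss of generality, $v_1\neq 0$. Near a generic point, each $\mathcal{C}_i$ is locally the graph of an analytic function $f_i$ in logarithmic coordinates, so that $\log Q_i=f_i(\log P_i)$; generic finiteness of the projections makes this possible locally. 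The condition that the sum stays constant along the line $z+tv$ then reads
\[ \sum_{i=1}^g f_i(z_i+tv_i)\equiv \text{const}, \]
up to the period lattice of $H$, for all small $t$ and for all $z$ in an open set. Differentiating in $z_1$ while keeping the other $z_j$ fixed gives $f_1'(z_1+tv_1)=f_1'(z_1)$ for all $t$ and $z_1$, since $\partial_{z_1}$ only sees the first summand. Hence $f_1'$ is constant and $f_1$ is affine linear locally. Therefore $\exp_{G\times H}$ maps an open piece of an affine line into $\mathcal{C}_1$, and Lemma \ref{lemtangent} shows that $\mathcal{C}_1$ is a translate of an algebraic subgroup, a contradiction.

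The hypothesis that $G$ is not $\mathbb{G}_a$ is not visibly needed here; it is presumably used elsewhere. The main obstacle is making the local analytic description rigorous. The branches of $\mathcal{C}_i$ may be multivalued, and the fibre over $Q$ in $\mathcal{V}_{\rm sum}$ involves choices of the $Q_i$. I would handle this by working at a generic point of $\mathcal{V}^*$ where every projection is étale, so that invariance of $\mathcal{V}_{\rm sum}$ under $H'$ lifts, by continuity and discreteness of fibres, to a fixed local branch. It is this lifting that justifies the identity above holding for all $z$ in an open set.
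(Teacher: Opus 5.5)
Your proposal is correct and follows the same route as the paper: pass to logarithmic coordinates, show that some local branch $f_j$ (any $j$ with $v_j\neq 0$) is affine linear, and conclude via Lemma \ref{lemtangent}. Your direct differentiation in $z_1$ is a streamlined substitute for Lemma \ref{lemfunctions} and Corollary \ref{vs}, and your continuity argument for fixing a local branch spells out a step that the paper only asserts, namely that the sum $\sum_i s_i$ is constant along $H'+P$.
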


As we exclusively work over complex algebraic groups, we will prove a lemma about holomorphic maps between tangent spaces of algebraic groups. 
\begin{lem}\label{lemfunctions} 
    Let $ U = U_1\times \cdots \times U_g$ be an open set of $\mathbb{C}^g$ and $f_i: U_i \rightarrow \mathbb{C}$ non-constant, holomorphic functions for $1 \leq i \leq g$. Suppose that there is a vector space $W \subseteq \mathbb{C}^g$ of dimension $1$ such that for any $b \in \C^g$, $(f_1, \dots, f_g)$ restricted to $(W + b) \cap U$ satisfies 
$$f_1(z_1) + \cdots + f_{g}(z_g) \equiv const.$$
Then there is at least one $j \in\{1, \dots, g\}$ such that $f_j$ is affine linear. 
\end{lem}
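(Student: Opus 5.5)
The plan is to differentiate the constancy relation along the direction of $W$, and then differentiate once more in a single coordinate, to force some $f_j''$ to vanish. Fix a nonzero vector $w=(w_1,\dots,w_g)$ spanning $W$.

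\emph{Step 1.} For any $b\in\mathbb{C}^g$ and any $t$ with $b+tw\in U$, the hypothesis gives
\[
F_b(t):=\sum_{i=1}^g f_i(b_i+tw_i)\equiv \mathrm{const}
\]
on each connected piece of $\{t : b+tw\in U\}$. This set is open in $\mathbb{C}$, and $F_b$ is holomorphic on it. Differentiating in $t$ at $t=0$ gives
\[
\sum_{i=1}^g w_i f_i'(b_i)=0
\]
for every $b\in U$. Since $b$ ranges over all of $U=U_1\times\cdots\times U_g$, we obtain the identity
\[
\sum_{i=1}^g w_i f_i'(z_i)=0\qquad\text{for all }(z_1,\dots,z_g)\in U .
\]

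\emph{Step 2.} Because $w\neq 0$, choose an index $j$ with $w_j\neq 0$. The identity from Step 1 holds on the product set $U$. So we may hold $z_i$ fixed for $i\neq j$ and differentiate with respect to $z_j$. All other terms are independent of $z_j$, hence
\[
w_j f_j''(z_j)=0\quad\text{on } U_j,
\]
and therefore $f_j''\equiv 0$ on $U_j$. Integrating twice shows that $f_j$ is affine linear on each connected component of $U_j$. In the intended application $U_j$ will be a small disc, so $f_j$ is affine linear. If several $w_i$ are nonzero, the argument applies to each of them. In the degenerate case where only $w_j$ is nonzero, the relation $w_jf_j'\equiv 0$ would even force $f_j$ to be constant, contradicting the hypothesis; so in fact at least two $w_i$ are nonzero, although this is not needed for the conclusion.

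\emph{Main obstacle.} The only subtle point is bookkeeping rather than mathematics. First, the constancy in Step 1 is along connected segments of $(W+b)\cap U$, which suffices because differentiation is local. Second, if the $U_i$ are not connected, affine linearity holds only componentwise. I would handle the second issue by shrinking $U$ to a product of discs at the outset, which is harmless for how the lemma will be used with $\log$ and $\exp$ charts near a point, for instance together with Lemma~\ref{lemtangent}.
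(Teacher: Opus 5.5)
Your argument is correct and is essentially the paper's: differentiate the constancy relation along the direction of $W$ to get $\sum_i w_i f_i'(z_i)=0$, then use the product structure of $U$ to isolate a single $f_j$ with $w_j\neq 0$. The paper isolates $f_j$ by fixing the other coordinates after a linear change of variables, whereas you differentiate once more; your observation that the derivative identity already holds at every point of the product set $U$ makes the paper's reparametrisation via $L$ unnecessary, and your remark that affinity holds only componentwise when some $U_j$ is disconnected is a harmless point the paper glosses over.
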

\begin{proof}
    After possibly permuting coordinates we can parameterise any co-set 
    $W + b$ by 
    $$(b_1, \dots, b_{g-k}, z,a_1 z + c_1,  \dots, a_{k-1}z + c_{k-1})$$ 
    where $k \geq 1$ is an integer, $a_1, \dots, a_{k-1} \in \C^*$ depend on $W$ and $b_1, \dots, b_{g-k}, c_1, \dots, c_{k-1}$ depend on $W + b$.  We may assume that $k \neq 1 $ since otherwise $f_g$ is constant. 
    We apply the invertible linear transformation 
\begin{align*}
L: \C^{g-k}\times \C\times \C^{k -1} & \rightarrow \C^{g-k}\times \C\times \C^{k -1} \\ 
    L(\underline{z},z, \underline{w}) & = (\underline{z},z, w_1 - a_1z, \cdots, w_{k-1}- a_{k-1}z)
\end{align*} 
to $U$ and the open set $L(U)$ then contains a product set $\tilde{U} = \tilde{U}_1\times \cdots \times \tilde{U}_g$. Taking the total derivative with respect to $z$ we obtain 
$$ \partial_{z_{g-k + 1}}f_{g-k + 1}(z) + a_1 \partial_{z_{g-k + 2}}f_{g-k + 2}(a_1z + c_1) + \dots + a_{k-1}\partial_{z_{g}}f_g(a_{k-1}z + c_{k-1}) =0,$$ 
for all $(b_1, \dots, b_{g-k},z, c_1, \dots,  c_{k-1}) \in \tilde{U}. $
Since $k \geq 2$, we may fix any 
\[(z_0, c_{1,0}, \dots, c_{k-2,0}) \in \tilde{U}_{g-k + 1}\times \cdots \times \tilde{U}_{g -1} \]
to find that $f_g$  is affine linear.  
\end{proof}

\begin{cor}\label{vs} Let $U_1\times \cdots \times U_g$ be an open set of $\mathbb{C}^g$ and $f_i: U_i \rightarrow \mathbb{C}$ holomorphic non-constant functions $i =1, \dots, g$. Suppose that there is a vector space $V\subsetneq \mathbb{C}^g$ of dimension $k $  such that for any $b \in \C^g$, $(f_1, \dots, f_g)$ restricted to $(V+ b) \cap U$ satisfies 
$$f_1(z_1) + \cdots + f_{g}(z_g) \equiv const.$$
Then  $f_j$ is affine linear for at least one $j \in \{ 1, \dots, g\}$. 
\end{cor}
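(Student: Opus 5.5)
The plan is to reduce Corollary \ref{vs} to Lemma \ref{lemfunctions} by restricting to a suitable one-dimensional subspace. Since $V \subsetneq \mathbb{C}^g$ has dimension $k \geq 1$, I would pick any line $W \subseteq V$ of dimension $1$. For every $b \in \mathbb{C}^g$ we have $W + b \subseteq V + b$. Hence $(W+b) \cap U \subseteq (V+b)\cap U$.

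The hypothesis says that $f_1(z_1) + \cdots + f_g(z_g)$ is constant on $(V+b)\cap U$. One point needs care here: if $(V+b)\cap U$ is disconnected, the constant could a priori differ between components. However, the hypothesis of the corollary literally asserts a single constant on $(V+b)\cap U$, which is exactly the form of the hypothesis in Lemma \ref{lemfunctions}. Restricting to the subset $(W+b)\cap U$ keeps the sum constant. So the hypothesis of Lemma \ref{lemfunctions} holds for the line $W$ and every translate $b$.

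Applying Lemma \ref{lemfunctions} then yields some $j \in \{1,\dots,g\}$ with $f_j$ affine linear, which is the desired conclusion. The functions $f_i$ are non-constant and holomorphic on the same product domain $U_1\times\cdots\times U_g$, so no other hypothesis needs checking.

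The only potential obstacle is the choice of $W$. Lemma \ref{lemfunctions} treats any line, including a coordinate line, where its proof gives that $f_g$ is constant. Since the $f_i$ are assumed non-constant, that case cannot occur. So any line $W \subseteq V$ works and no special choice is needed. If one preferred, one could choose $W$ not contained in any coordinate hyperplane-complement degenerate position, but the argument does not require this.
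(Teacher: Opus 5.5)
Your proof is correct and is essentially the paper's argument: the paper likewise notes that every translate of $V$ contains a translate of a one-dimensional subspace $L\subseteq V$, so the hypothesis of Lemma~\ref{lemfunctions} holds for $L$, and concludes directly. Your explicit assumption $k\geq 1$ is also used tacitly in the paper, and it is genuinely needed: for $k=0$ the hypothesis holds trivially and the conclusion can fail.
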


\begin{proof}
    
    We can cover $V$ by translates of a one dimensional vector space $L$ and thus any translate of $V$ contains a translate of $L$. Thus Corollary \ref{vs} is implied by Lemma \ref{lemfunctions}.  
\end{proof}
\begin{proof}[Proof of Proposition \ref{nondegenerate}] 
Let $\mathcal{V}^o$ be the variety given by the points $(P_1, \dots, P_g, Q) \in G^g\times H$ such that   there exists  $(P_i, Q_i) \in \mathcal{C}_i\cap (G\times H(\C))$, with  $Q_1 + \cdots +Q_g = Q$. Note that $\mathcal{V}^o$
is open in $\mathcal{V}_{\text{sum}}$. For all but finitely many $P \in G(\C)$, there exists $Q \in H(\C)$ such that $(P, Q) \in \mathcal{C}_i(\C)$ for $i = 1, \dots, g$. Thus, $\pi_1$ is dominant and by \cite[Theorem 1.11]{Shafarevic} it is surjective. Also, for $Q \in H(\C)$, we can find $(Q_1, \dots, Q_g) \in H(\C)$ such that $Q_1 + \cdots + Q_g = Q$. It follows that $\pi_2$ is dominant. Now suppose that $\Vsum$ is degenerate, that is, there exists a connected algebraic group $H' \subseteq G^g$ of positive dimension, and a proper subvariety $\mathcal{W} \subseteq (G^g/H') \times \overline{H}$ such that 
\begin{equation} \label{bandeh}
    \mathcal{V}_{\rm sum} = \pi_{H'}^{-1}(\mathcal{W}), 
\end{equation}
for the projection $\pi_{H'}: G^g \times \overline{H} \rightarrow (G^g/H')\times \overline{H}$.

Now, let $s_1, \dots, s_g$ be  analytic functions on an open $U \subseteq G(\C))$ with target $H(\C)$, such that the graph of $s_i$ coincides with $\mathcal{C}_i(\C)$ restricted to $U\times H(\C)$. By (\ref{bandeh}) the sum $\sum_{i = 1}^gs_i$ (where we sum in $H$) is constant along $H' + P$ for all $P \in G^g(\mathbb{C})$. After perhaps shrinking $U$ to ensure that it is simply connected, we lift these functions to functions from the tangent space of $G$ to the tangent space of $H$, via setting $f_i = \exp_{H}\circ s_i\circ \log_G, i =1,\dots, g$. Now setting $V$  to be the tangent space of $H'$, and recalling that the sum $\sum_{i = 1}^gs_i$ is constant along translates of $H'$, we deduce that $f_1, \dots, f_g$ satisfy the conditions of Corollary \ref{vs}. Thus at least one of $f_i$ is affine linear. Lemma \ref{lemtangent} implies that at least one $\mathcal{C}_i$ is the translate of an algebraic subgroup, which contradicts  our assumption on the correspondences and concludes the proof. 
\end{proof}

\section{Freiman--type structural theorems} \label{addcom}

For the purposes of this section and the next, given a $1$-dimensional, connected algebraic group $H$ and some finite, non-empty set $A \subsetneq H$, we denote $\rk(A)$ to be the smallest integer $r \geq 1$ such that there exist $\xi_1, \dots, \xi_r \in H$ satisfying
\[ A \subseteq \{ n_1 \xi_1 + \dots + n_r \xi_r : n_1, \dots, n_r \in \mathbb{Z} \}. \]
The main aim of this section is to prove the following structural result.

\begin{lem} \label{wpfrlemma2}
        Let $H$ be a connected algebraic group of dimension $1$, let $A \subseteq H$ be a finite, non-empty set, let $n \geq 2$ be an integer such that $|nA| \leq K|A|$ for some $K > 1$. Then there exists some integer $1 \leq d$ and  some subset $A' \subseteq A$ such that 
        \[  d \ll 1+\frac{\log (4K)}{\log n} \ \ \text{and}  \ \ |A'| \gg \frac{|A|}{K^{\frac{C\log 2}{ \log n}}} \ \ \text{and} \ \  \rk(A') \leq d, \]
        where $C>0$ is some absolute constant.
\end{lem}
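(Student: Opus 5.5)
The plan is to reduce to weak PFR over $\mathbb{Z}^s$ (Gowers--Green--Manners--Tao \cite{GGMT2025}), applied to a suitable iterated sumset $2^jA$ instead of $A$. The $\log n$ in the denominators comes from pigeonholing over dyadic scales.

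\textbf{Step 1: reduction to a torsion-free group.} Let $\Gamma \subseteq H(\C)$ be the subgroup generated by $A$. It is finitely generated abelian, so $\Gamma \cong \mathbb{Z}^s \oplus T$ with $T$ finite. By the classification of $1$-dimensional groups in \S\ref{setupsec}, every finite subgroup of $\C$, $\C^\ast$ or $\C/\Lambda$ is generated by at most two elements. Hence $T$ contributes at most $2$ to any rank count, and it suffices to bound the rank of the image of $A'$ in $\Gamma/T \cong \mathbb{Z}^s$.

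The quotient map $\phi:\Gamma\to\Gamma/T$ is not injective on $A$, so it does not preserve the hypothesis directly. I will handle this as follows. Since $T$ is finite, the equivalence ``$x-y\in T$'' is a congruence compatible with sums. So every structural statement (a large subset lying in a coset of a rank-$\le d$ subgroup) can be made for the multiset image and lifted back, because the fibres of $\phi$ are cosets of $T$. To keep things clean, I would run Steps 2--4 directly in $\Gamma$, applying weak PFR in the form valid for finitely generated abelian groups with bounded torsion rank. This costs only $O(1)$ in $d$.

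\textbf{Step 2: finding a scale with small doubling.} Let $j_0=\lfloor \log_2 n\rfloor - 1$, so that $2^{j_0+1}\le n$. By Plünnecke--Ruzsa monotonicity,
\[
|A|\le |2A|\le \dots \le |2^{j_0+1}A|\le |nA|\le K|A|.
\]
Telescoping the ratios $|2^{j+1}A|/|2^jA|$ over $0\le j\le j_0$ gives some $j$ with
\[
|2^{j+1}A|\le K^{1/(j_0+1)}|2^jA| = K'|2^jA|, \qquad K' = K^{O(\log 2/\log n)}
\]
(for $n$ small, take $j=0$ and $K'\le K$). Set $B=2^jA$. Then $|B+B|\le K'|B|$.

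\textbf{Step 3: weak PFR and pulling back to $A$.} Apply weak PFR to $B$. This gives $B''\subseteq B$ with $|B''|\ge K'^{-C}|B|\ge K'^{-C}|A|$, and $B''$ contained in a translate $x+V$ of a subgroup $V$ of rank $\ll 1+\log K'$. This is the source of
\[
d\ll 1+\frac{\log (4K)}{\log n}.
\]
To return to $A$, let $\pi:\Gamma\to\Gamma/V$ be the quotient map. For $a,a'\in A$ lying in distinct cosets of $V$, the translates $B''+a$ and $B''+a'$ lie in distinct cosets, hence are disjoint. It follows that
\[
|\pi(A)|\,|B''|\le |B''+A|\le |2^{j+1}A|\le K'|2^jA|,
\]
and therefore $|\pi(A)|\le K'^{C+1}$. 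Pigeonholing over the cosets of $V$ gives $y$ such that $A'=A\cap(y+V)$ satisfies
\[
|A'|\ge \frac{|A|}{|\pi(A)|}\ge \frac{|A|}{K'^{C+1}} = \frac{|A|}{K^{O(\log 2/\log n)}}.
\]

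\textbf{Step 4: rank count.} The set $A'$ lies in the subgroup generated by $V$ and $y$, together with $T$. Its rank is therefore at most $\rk V+1+2$, which is of the required form for $d$.

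\textbf{Main obstacle.} I expect the delicate point to be Step 1 combined with the exact form of weak PFR. GGMT is stated for torsion-free groups (or $\mathbb{Z}^s$) with a covering/dimension conclusion, and it must be converted into ``a dense subset lying in a coset of a rank-$O(\log K')$ subgroup'' inside a group that may have torsion. My first attempt would be to apply GGMT to $\phi(B)$ in $\mathbb{Z}^s$, using that fibres of $\phi$ have size at most $|T|$. Since $|T|$ is not controlled, I would instead use that sum-compatibility lets one replace $B$ by $B+T$: the set $B+T$ has doubling at most $K'$ relative to itself, and its image is an honest set in $\mathbb{Z}^s$. Lifting back adds only the two generators of $T$.
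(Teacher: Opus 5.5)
Your argument is correct and is the paper's proof: you pigeonhole over the dyadic scales $2^jA$ with $2^{j+1}\le n$ to get doubling $K'=K^{O(\log 2/\log n)}$, apply weak PFR to $2^jA$, and transfer back to $A$ by pigeonholing. Your count $|\pi(A)|\le |B''+A|/|B''|$ of the cosets of $V$ met by $A$ is interchangeable with the paper's bound $|X||A|\le |X+A|\max_y|(y-X)\cap A|$. The torsion-tolerant weak PFR you invoke in Step 1 is exactly the paper's Lemma \ref{wpfrlemma}. There it is proved by restricting to one of $100$ boxes of the (at most two-generated) torsion part, so that the naive lift to a free abelian group is a Freiman $2$-isomorphism, and then applying GGMT.

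The one slip is in your fallback via $\phi:\Gamma\to\Gamma/T$. The doubling $|2(B+T)|/|B+T|=|2\phi(B)|/|\phi(B)|$ is not bounded by $K'$ in general. For example, take $B=(P\times T)\cup(Q\times\{0\})$ with $P$ an interval, $|T|\approx K'^2$, $|Q|=|P|$, $|P+Q|\approx K'|P|$ and $|Q+Q|\approx K'^2|Q|$; then the doubling of $\phi(B)$ is of order $K'^2$. It is, however, at most $K'^3$. If $B_0$ is a largest fibre of $\phi|_B$, of size $M$, then $M|2\phi(B)|\le |B_0+2B|\le |3B|\le K'^3|B|\le K'^3M|\phi(B)|$. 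With this bound, run your Step 3 with $\phi^{-1}(\bar B'')$ in place of $B''$, where $\bar B''\subseteq\phi(B)$ is the GGMT output. This still gives $|\pi(A)|\le K'^{O(1)}$, so every loss remains a power of $K'$.
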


We will begin by proving the $n=2$ version of this. 

\begin{lem} \label{wpfrlemma}
        Let $H$ be a connected algebraic group of dimension $1$, let $A \subseteq H$ be a finite, non-empty set such that $|A+A| \leq K|A|$ for some $K > 1$. Then there exists some integer $1 \leq d \leq C\log (400K)$,  and some subset $A' \subseteq A$ such that $|A'| \geq |A|/(100K)^{C'}$ and $\rk(A') \leq d$,  where $C = 140$ and $C' = 110$.
\end{lem}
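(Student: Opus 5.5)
We will reduce to the case of a torsion-free finitely generated abelian group and then invoke the weak polynomial Freiman--Ruzsa theorem over $\mathbb{Z}^r$ of Gowers--Green--Manners--Tao \cite{GGMT2025}. Throughout, $H(\mathbb{C})$ is an abelian group isomorphic to $\mathbb{C}$, $\mathbb{C}^\ast$ or $\mathbb{C}/\Lambda$, by the classification in \S\ref{setupsec}.

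\textbf{Step 1: reduce to a finitely generated group.} Let $\Gamma$ be the subgroup generated by $A$. It is finitely generated, so $\Gamma\cong \mathbb{Z}^s\oplus T$ with $T$ finite. Since $T$ is a finite subgroup of $H(\mathbb{C})$, it is a finite subgroup of $\mathbb{C}/\Lambda$, $\mathbb{C}^\ast$ or $\mathbb{C}$. Hence $T$ is generated by at most $2$ elements; this is the Lie-theoretic fact alluded to in the introduction. The rank $s$ may be huge, but that is harmless, since the GGMT result is uniform in the ambient dimension.

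\textbf{Step 2: apply weak PFR.} Let $\phi:\Gamma\to\mathbb{Z}^s$ be the projection killing $T$. The fibres of $\phi$ restricted to $A$ have size at most $|T|$, so $\phi$ is not injective in general. We handle this by pigeonholing on cosets of $T$, which partition $A$.

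Write $B=\phi(A)$. Then $|B+B|\le |\phi(A+A)|\le |A+A|\le K|A|$. Since $|B|\ge |A|/|T|$, this gives a doubling bound for $B$ only when $|T|$ is bounded, so $T$ needs better treatment. The better approach is to absorb $T$ into the argument. The weak PFR over torsion-free groups yields a subset $B'\subseteq B$ with $|B'|\ge |B|/K^{O(1)}$ and $B'$ contained in a subspace or coset of dimension $O(\log K)$. Over $\mathbb{Z}^s$, GGMT give a coset $x+V$ of a subgroup $V$ of rank $O(\log K)$ containing $\gg K^{-O(1)}|B|$ points.

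To avoid losing factors of $|T|$, I instead use the GGMT statement for general torsion-free groups by passing to $\Gamma/T$ together with fibre counts. Let $A_x$ be the fibres. By a standard Ruzsa covering / popular-fibre argument, one chooses $B$ to be the set of $x$ whose fibres are of near-maximal size $\ge m/2$, up to dyadic pigeonholing losing only $O(\log |T|)$. Better still, we may take a single dyadic class, so that $|A|\approx m|B|$ with $m\le |T|$. Then $|B+B|\cdot m\lesssim |A+A|$, because sums of full fibres over distinct base points each contribute at least $m$ elements (each is a union of a $T$-coset translate of a set of size $m$). Hence $B$ has doubling $O(K)$. Apply GGMT to $B$ to get $B'\subseteq B$ of size $\gg |B|K^{-O(1)}$ inside $x_0+V$, with $\rk V=O(\log K)$. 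Let $A'$ be the union of the fibres over $B'$.

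\textbf{Step 3: count the rank.} $A'$ lies in the group generated by $x_0$, a basis of $V$, and the at most $2$ generators of $T$. Therefore $\rk(A')\le O(\log K)+3\le C\log(400K)$. We also have $|A'|\gg |A|K^{-O(1)}$, where the dyadic loss is removed by choosing the class carrying the most mass. The explicit constants $140$ and $110$ come from tracking the GGMT exponents together with the Plünnecke--Ruzsa losses.

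\textbf{Main obstacle.} The delicate step is the torsion interaction in Step 2: ensuring that passing from $A$ to the torsion-free quotient loses only $K^{O(1)}$, and not factors of $|T|$. The first thing I would try is the fibre-regularisation argument above, which uses $|A+A|\ge$ (number of distinct base sums)$\times$(minimal fibre size). If the dyadic step produces an unwanted $\log|T|$ loss, the fallback is to choose the largest class by the inequality $\sum_j 2^j|B_j|\ge |A|$ combined with a bound on the number of relevant classes, obtained via Plünnecke.
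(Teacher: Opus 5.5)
Your proposal has a genuine gap at the step it itself calls the main obstacle: Step 2, where you conclude ``Hence $B$ has doubling $O(K)$'' after selecting one dyadic class of fibre sizes. From $|B_j+B_j|\,m\le |A+A|\le K|A|$ you only get doubling at most $K|A|/(m|B_j|)$. The quantity $m|B_j|$ is guaranteed to be $\gg |A|/(\text{number of classes})$, and the number of classes can be as large as $\log_2|T|$. That is not bounded in terms of $K$, since $T$ (for instance $\mu_N\subseteq\mathbb{C}^\ast$) can be arbitrarily large. The fallback you sketch is not carried out, and Pl\"unnecke is not the tool needed.

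The missing observation is elementary. Let $M=|A\cap\phi^{-1}(x_0)|$ be the largest fibre. Then $(A\cap\phi^{-1}(x_0))+A$ meets each of the $|B|$ distinct fibres over $x_0+B$ in at least $M$ points, so
\[
M|B|\le |A+A|\le K|A|.
\]
Thus no fibre exceeds $K$ times the average $\mu=|A|/|B|$. Put $B^\ast=\{x:|A\cap\phi^{-1}(x)|\ge \mu/2\}$. Then $B^\ast$ carries at least $|A|/2$ points, so $|B^\ast|\ge |B|/(2K)$. Also $|B^\ast+B^\ast|\,\mu/2\le K|A|$, so $|B^\ast+B^\ast|\le 2K|B|\le 4K^2|B^\ast|$. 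Applying GGMT to $B^\ast$ and pulling back through a splitting $\Gamma\cong\mathbb{Z}^s\oplus T$ (adding the at most two generators of $T$) then gives $\rk(A')=O(\log K)$ and $|A'|\ge |A|K^{-O(1)}$.

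Even repaired, your route does not prove the lemma with the stated constants. The base set has doubling $O(K^2)$, or $O(K\log K)$ via dyadic classes, rather than $K$ times an absolute constant. GGMT then gives $d\le 140\log(4K')$ with $K'/K\to\infty$, which eventually exceeds $140\log(400K)$; the same happens for the size bound. So your sentence attributing $140$ and $110$ to ``Pl\"unnecke--Ruzsa losses'' is unjustified.

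The paper avoids quotienting by torsion altogether. It places $\Gamma$ inside $\mathbb{Z}^l\times\mathbb{Z}/n\mathbb{Z}\times\mathbb{Z}/m\mathbb{Z}$ and cuts the torsion part into $100$ boxes of sides $n/10$ and $m/10$. It pigeonholes to a box containing at least $|A|/100$ points. On that box, lifting residues to $\{0,\dots,n-1\}\times\{0,\dots,m-1\}$ is a Freiman $2$-isomorphism into $\mathbb{Z}^{l+2}$, because $a_1+a_2-a_3-a_4\in[-n/5,n/5]$ forces a congruence to be an equality. The doubling loss is therefore the constant $100$, independent of $|T|$, which is exactly where $\log(400K)$ and the factor $(100K)^{C'}$ come from. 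For the paper's applications only $d\ll\log K$ and $K^{O(1)}$ losses matter, so your repaired argument would suffice there, but not for the statement as written.
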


In order to prove Lemma \ref{wpfrlemma}, we will need the following very nice result of Gowers--Green--Manners--Tao \cite[Theorem 1.3]{GGMT2025} on the resolution of the weak polynomial Freiman--Ruzsa conjecture over $\mathbb{Z}$.

\begin{lem} \label{wpfr}
Let $D$ be a positive integer, let $A \subsetneq \mathbb{Z}^D$ be a finite, non-empty set such that $|A+A| \leq K|A|$ for some $K >1$. Then there exists some integer $1 \leq d \leq C \log (4K)$, some elements $\vec{x}_1, \dots, \vec{x}_d \in \mathbb{Z}^D$ and some subset $A' \subseteq A$ such that $|A'| \geq |A|/K^{C'}$ and 
\[ A' \subseteq \{ n_1 \vec{x}_1 + \dots + n_d \vec{x}_d : n_1, \dots, n_d \in \mathbb{Z}\}, \]
where $C = 140$ and $C' = 110$.
\end{lem}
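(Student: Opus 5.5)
The statement is essentially \cite[Theorem 1.3]{GGMT2025}, so the plan is to quote that theorem and translate its conclusion into the form needed here. Gowers--Green--Manners--Tao state weak PFR over $\mathbb{Z}^D$ in terms of dimension. If $A \subseteq \mathbb{Z}^D$ is finite with $|A+A|\leq K|A|$, they produce $A'\subseteq A$ with $|A'| \geq K^{-C'}|A|$ such that $A'$ is contained in an affine subspace of dimension at most $C\log K$. Equivalently, $A' - a_0$ spans a subgroup of rank at most $C\log K$ for any fixed $a_0 \in A'$. I would first check that the constants $C=140$, $C'=110$ match those in their paper, and that their notion of dimension is the dimension of the affine span over $\mathbb{R}$ (or $\mathbb{Q}$).

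The translation step goes as follows. Fix $a_0\in A'$ and let $\Lambda$ be the subgroup of $\mathbb{Z}^D$ generated by $A'-a_0$. Since $\Lambda\subseteq \mathbb{Z}^D$, it is a free abelian group. Its rank equals the $\mathbb{Q}$-dimension of its span, which is at most $d_0 \leq C\log K$. Choose a $\mathbb{Z}$-basis $\vec{x}_1,\dots,\vec{x}_{d_0}$ of $\Lambda$ and set $\vec{x}_{d_0+1}=a_0$. Then
\[ A' \subseteq a_0 + \Lambda \subseteq \{ n_1\vec{x}_1+\dots+n_{d}\vec{x}_{d} : n_i\in\mathbb{Z}\} \]
with $d = d_0+1$. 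The extra generator is absorbed by replacing $\log K$ with $\log(4K)$: since $K>1$ we have $C\log(4K) \geq C\log K + C\log 4 \geq d_0+1$. This also guarantees $d\geq 1$ even when $A'$ is a single point.

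The only genuine content, and hence the main obstacle if one had to reprove it, is the GGMT theorem itself. Their argument proceeds through the entropic form of PFR for torsion-free groups: one shows that a random variable $X$ on $\mathbb{Z}^D$ with small doubling constant $d[X;X]$ is close in entropic Ruzsa distance to a uniform variable on a low-dimensional set. This uses the entropy-decrement argument, combined with reduction modulo $2$ to transfer the $\mathbb{F}_2^n$ PFR result. The combinatorial statement is then extracted from the entropic one by standard Ruzsa-covering and pigeonholing. I would not reproduce this and would simply cite it.
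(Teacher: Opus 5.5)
Your proposal is correct and takes the same approach as the paper, which gives no argument of its own and simply imports the statement as Theorem~1.3 of Gowers--Green--Manners--Tao. Your conversion from their affine-dimension formulation to a $\mathbb{Z}$-span is the step the paper leaves implicit, and it is sound: take a $\mathbb{Z}$-basis of the lattice generated by $A'-a_0$, add $a_0$ as one extra generator, and absorb that extra generator using $C\log(4K)\geq C\log K+1$.
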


We will also need the following simple lemma. 

\begin{lem} \label{mst}
Let $H$ be a connected algebraic group of dimension $1$, let $A \subsetneq H$ be a finite, non-empty set. Then the subgroup generated by $S$ is isomorphic to some subgroup of $\mathbb{Z}^D \times \mathbb{Z}/n \mathbb{Z} \times \mathbb{Z}/m\mathbb{Z}$, for some non-negative integer $D$ and some $n,m\in \mathbb{N}$.
\end{lem}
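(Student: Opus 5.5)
The statement has a typo: $S$ should be $A$. The plan is to combine the structure theorem for finitely generated abelian groups with the classification of $H(\C)$ given in \S\ref{setupsec}.

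Let $\Lambda = \langle A\rangle \subseteq H(\C)$ be the subgroup generated by $A$. It is a finitely generated abelian group, since $A$ is finite. By the structure theorem,
\[ \Lambda \cong \mathbb{Z}^D \oplus T, \]
where $D \geq 0$ is an integer and $T$ is the torsion subgroup of $\Lambda$, which is finite. It therefore suffices to show that $T$ embeds into $\mathbb{Z}/n\mathbb{Z}\times\mathbb{Z}/m\mathbb{Z}$ for some $n,m\in\mathbb{N}$.

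To bound the torsion, we use the exponential map. Since $H$ is commutative, $\exp_H$ is a surjective homomorphism with discrete kernel $\Lambda_H$, and so $H(\C)\cong \C/\Lambda_H$. The kernel $\Lambda_H$ is $0$, a rank-one lattice, or a rank-two lattice. Let $N = |T|$. Then $T$ is contained in the $N$-torsion subgroup
\[ H[N] \cong \tfrac{1}{N}\Lambda_H/\Lambda_H \cong (\mathbb{Z}/N\mathbb{Z})^{s}, \qquad s = \rk(\Lambda_H)\in\{0,1,2\}. \]
Every subgroup of $(\mathbb{Z}/N\mathbb{Z})^2$ is generated by at most two elements, because it is a quotient of a subgroup of $\mathbb{Z}^2$, and such a subgroup is free of rank at most two. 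Hence $T \cong \mathbb{Z}/n\mathbb{Z}\times\mathbb{Z}/m\mathbb{Z}$ for some $n,m \geq 1$, where we allow trivial factors when $s<2$. Putting the two steps together, $\Lambda$ is isomorphic to $\mathbb{Z}^D\times\mathbb{Z}/n\mathbb{Z}\times\mathbb{Z}/m\mathbb{Z}$, which gives the lemma.

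No step is a genuine obstacle. The only point requiring care is the identification $H(\C)\cong\C/\Lambda_H$ as abstract groups, which follows from Proposition \ref{structure_of_morphisms_of_1_dim_Lie_groups} applied to $\exp_H$. The rest is elementary group theory.
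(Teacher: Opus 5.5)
Your proof is correct and follows essentially the paper's argument: both rest on the identification $H(\mathbb{C})\cong\mathbb{C}/\Lambda_H$ with $\Lambda_H$ of rank at most two, combined with the structure theorem for finitely generated abelian groups. The paper treats $\mathbb{G}_a$, $\mathbb{G}_m$ and elliptic curves separately and cites the torsion bound as a standard fact, whereas your observation $T\subseteq H[N]\cong(\mathbb{Z}/N\mathbb{Z})^{s}$ covers all three cases at once and even gives an isomorphism rather than just an embedding.
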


\begin{proof}
    This is true when $H = (\mathbb{C}, +)$ since any finitely generated subgroup of $(\mathbb{C}, +)$ is isomorphic to $\mathbb{Z}^D$ for some $D \in \mathbb{N}$. This is slightly more non-trivial when $H = (\mathbb{C}^*, \cdot)$, but it is a standard fact that any finitely generated subgroup of $(\mathbb{C}^*, \cdot)$ is isomorphic to $\mathbb{Z}^D \times \mathbb{Z}/N \mathbb{Z}$ for some non-negative integer $D$ and some $N \in \mathbb{N}$. Finally, when $H$ is some elliptic curve over $\mathbb{C}$, we may use the fact, mentioned in Section 3.1, that $H$ is isomorphic to $\mathbb{C}/L$, where $L$ is some lattice of rank two in $\mathbb{C}$, to deduce that any finitely generated subgroup of $H$ is isomorphic to some subgroup of $\mathbb{Z}^D\times \mathbb{Z}/n\mathbb{Z} \times \mathbb{Z}/m\mathbb{Z}$ for some non-negative integer $D$ and some $n,m\in \mathbb{N}$.
\end{proof}

We are now ready to prove Lemma \ref{wpfrlemma}

\begin{proof}[Proof of Lemma \ref{wpfrlemma}]
    Let $\Gamma$ be the subgroup generated by $S$. We can use Lemma \ref{mst} to view $\Gamma$ as a subgroup of $\mathbb{Z}^l \times \mathbb{Z}/n\mathbb{Z} \times \mathbb{Z}/ m \mathbb{Z}$ for some integers $n,m \geq 1$ and some integer $l \geq 0.$  Now, for any $0\leq i, j \leq 9$, define
    \[ A_i = \{ x \in \mathbb{Z} : in/10 \leq x < (i+1)n/10 \} \ \ ({\rm mod} \ n)  \] 
    and
    \[ B_j = \{ x \in \mathbb{Z} : jm /10 \leq x < (j+1)m/10 \} \ \ ({\rm mod} \ m) . \]
    Thus $\mathbb{Z}/n \mathbb{Z} \times \mathbb{Z}/m\mathbb{Z} = \cup_{0 \leq i,j \leq 9} (A_{i} \times B_j)$. Moreover, let $S_{i,j} = S \cap (\mathbb{Z}^l \times A_i \times B_j)$ for every $0 \leq i,j \leq 9$. Since
    \[ \sum_{0 \leq i,j \leq 9} |S_{i,j}|  = |S|,\]
    by the pigeonhole principle, there exist some $0 \leq i,j \leq 9$ such that $|S_{i,j}| \geq |S|/100$.

    Let $\pi : \mathbb{Z}^l \times \mathbb{Z}/n \mathbb{Z} \times \mathbb{Z}/m \mathbb{Z} \to \mathbb{Z}^{l+2}$ be the map satisfying
    \[ \pi(\vec{x}, a \ ({\rm mod} \ n),b\ ({\rm mod} \ m)) = (\vec{x},a,b) \]
    for all $\vec{x} \in \mathbb{Z}^l$ and $a \in \{0,1,\dots, n-1\}$ and $b \in \{0,1,\dots, m-1\}$. We now claim that for any $s_1, s_2,s_3,s_4 \in S_{i,j}$, one has
    \begin{equation} \label{freiman2iso}
        \pi(s_1) + \pi(s_2) = \pi(s_3) + \pi(s_4) \ \ \text{if and only if} \ \  s_1 + s_2 = s_3 + s_4. 
    \end{equation}
    In order to see this, first note that since $\pi^{-1}$ is just the projection map, it suffices to check that equality on the right hand side implies equality on the left hand side. Writing 
    \[ s_l = (\vec{x}_l, a_l \ ({\rm mod} \ n),b_l\ ({\rm mod} \ m)) \]
    for every $1 \leq l \leq 4$, we see that $s_1 + s_2 = s_3 + s_4$ implies that 
    \[ a_1 + a_2 - a_3 - a_4  \equiv 0\ ({\rm mod} \ n) \ \ \text{and} \ \ b_1 + b_2 - b_3 - b_4 \equiv 0 \ ({\rm mod} \ m). \]
    Since $in/10 \leq a_1, a_2, a_3, a_4 < (i+1)n/10$, we see that 
    \[ a_1 + a_2 - a_3 - a_4 \in [-n/5, n/5]\cap \mathbb{Z}.\] The preceding congruence condition now necessitates that $a_1 + a_2 = a_3 + a_4$. A similar argument gives us that $b_1 + b_2 = b_3 + b_4$.

    Thus, writing $S_1 = \pi(S_{i,j})$, the equivalence in \eqref{freiman2iso} implies that
    \[ |S_1 + S_1| = |S_{i,j} + S_{i,j}| \leq |S +S| \leq K|S| \leq 100K |S_{i,j}| = 100K |S_1| \]
    Since $S_1 \subseteq \mathbb{Z}^{l+2}$, we may now apply Lemma \ref{wpfr} to find some subset $S_1' \subseteq S_1$ such that $|S_1'| \geq |S_1|/(100K)^{C'}$ and 
    \[ S_1' \subset \{ n_1 \vec{x}_1 + \dots + n_d \vec{x}_d : n_1, \dots, n_d \in \mathbb{Z}\},\]
    where $\vec{x}_1, \dots, \vec{x}_d \in \mathbb{Z}^{l+2}$ are some elements and $1\leq d \leq C \log (400K)$ is some integer. This implies that
    \[ \pi^{-1}(S_1') \subset \{ n_1 \pi^{-1}(\vec{x}_1) + \dots + n_d \pi_1^{-1}(\vec{x}_d) : n_1, \dots, n_d \in \mathbb{Z} \}. \]
    Setting $S' = \pi^{-1}(S_1')$ finishes the proof of Lemma \ref{wpfrlemma}.
\end{proof}

We now present our proof of Lemma \ref{wpfrlemma2}.

\begin{proof}[Proof of Lemma \ref{wpfrlemma2}]
 Since $n \geq 2$, we have that $|2S| \leq |nS|$, and so, whenever $2 \leq n \leq 16$, we may apply Lemma \ref{wpfrlemma2} and adjust the implicit constant in the Vinogradov notation to obtain the desired result. Thus, we assume that $n > 16$, in which case, writing $k = \lfloor (\log n)/(\log 2) \rfloor$, we see that $k \geq 4$. Now since $|2^k S| \leq |nS | \leq K |S|$, we get that
 \[ K \geq \frac{|2^k S|}{|S|} = \prod_{1 \leq j \leq k} \frac{|2^j S|}{|2^{j-1}S|}, \]
 whence, there exists some $1 \leq j \leq k$ such that
 \begin{equation} \label{boot1}
|2^j S| =  |2^{j-1}S + 2^{j-1} S| \leq K^{1/k} |2^{j-1} S|. 
 \end{equation}
 Applying Lemma \ref{wpfrlemma} for the set $2^{j-1}S$, we get that there exists some set $X \subseteq 2^{j-1}S$ such that
 \begin{equation} \label{boot2}
 |X| \gg \frac{|2^{j-1} S|}{ K^{C/k} } \ \ \text{and} \ \ X \subseteq \{ n_1 \xi_1 + \dots + n_d \xi_d : n_1, \dots, n_d \in \mathbb{Z} \} ,
 \end{equation}
 for some 
 \[ d \ll 1 + \frac{ \log (4K) }{k} \]
and some points $\xi_1, \dots, \xi_d \in H$.

 Thus, it suffices to prove that there exists a large subset of $S$ which is contained in a translate of the set $-X$. In order to do this, note that
 \[ |X||S| = \sum_{y \in X+ S}  |(y-X) \cap S| \leq |X+S| \max_{y \in X+S} |(y-X)\cap S|.  \]
 Hence, it suffices to show that 
 \[ \frac{|X + S|}{|X|} \ll K^{C'/k},\]
 for some absolute constant $C'>0$. In order to see this, we combine the fact that $X \subseteq 2^{j-1} S$ along with \eqref{boot1} and \eqref{boot2} to get that
\[ 
      \frac{|X+S|}{|X|} \leq \frac{|2^{j-1}S + 2^{j-1}S|}{|X|} \ll K^{C/k} \frac{|2^j S|}{|2^{j-1}S|} \ll K^{(C+1)/k}.
\]
 This concludes our proof of Lemma \ref{wpfrlemma2}.
\end{proof}

We briefly remark that structural results akin to Freiman's inverse theorem are often used in unison with covering results. One such result is known as Ruzsa's covering lemma. 

\begin{lemma} \label{rcl}
    Let $G$ be an abelian group, let $A, B \subseteq G$ be non-empty sets such that $|A+B| \leq K|B|$. Then there exists some non-empty set $X \subseteq A$ such that $|X| \leq K$ and $A \subseteq X + B - B.$  
\end{lemma}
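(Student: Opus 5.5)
The plan is a greedy covering argument. Start by picking a maximal subset $X \subseteq A$ whose translates $x + B$, $x \in X$, are pairwise disjoint. Such a set exists because $A$ is finite (if $A$ is infinite, use Zorn's lemma or pass through the bound below). It is non-empty, since any single element $a \in A$ gives the disjoint family $\{a+B\}$.

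First I bound $|X|$. The sets $x+B$ for $x \in X$ are disjoint, each has size $|B|$, and each lies in $A+B$. Hence
\[ |X||B| = \Big|\bigcup_{x \in X}(x+B)\Big| \leq |A+B| \leq K|B|, \]
and dividing by $|B|>0$ gives $|X| \leq K$.

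Next I show $A \subseteq X + B - B$. Take any $a \in A$. If $a \in X$, then $a = a + b - b$ for any $b \in B$, so $a \in X+B-B$. Otherwise, maximality of $X$ means $a + B$ meets $x + B$ for some $x \in X$. So $a + b_1 = x + b_2$ for some $b_1, b_2 \in B$, which gives $a = x + b_2 - b_1 \in X + B - B$.

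The only delicate point is the existence of a maximal disjoint family when $A$ is infinite, and the bound handles it. Any finite disjoint family has size at most $K$, so take a disjoint family of maximal cardinality, which is at most $\lfloor K \rfloor$; it is then automatically maximal under inclusion. Commutativity of $G$ is used only to rearrange $a + b_1 = x + b_2$.
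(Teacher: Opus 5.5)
Your proof is correct and is exactly the standard maximal-disjoint-translates argument for Ruzsa's covering lemma, which the paper quotes without proof. Two small remarks. First, the lemma implicitly assumes $B$ is finite, and then $A$ is automatically finite (as $A+b_0\subseteq A+B$ for any $b_0\in B$), so your infinite-$A$ discussion is unnecessary. Second, the step $a=x+b_2-b_1$ is just right cancellation, so it does not actually use commutativity.
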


This immediately combines with Lemma \ref{wpfrlemma} to deliver the following result. 

\begin{lemma} \label{coveringlemma}
     Let $H$ be a connected algebraic group of dimension $1$, let $A \subseteq H$ be a finite, non-empty set such that $|A+A| \leq K|A|$ for some $K > 1$. Then there exists some integer $1 \leq d \leq C\log (400K)$, some finite subset $T \subsetneq H$ with $\rk(T) = d$ and some non-empty $X \subseteq H$ such that 
     \[ |X| \leq (100K)^{C'+1} \ \ \text{and} \ \  A \subseteq X + T,\]
     where $C = 140$ and $C' = 110$.
\end{lemma}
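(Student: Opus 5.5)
The plan is to first apply Lemma \ref{wpfrlemma} to obtain a large structured piece of $A$, and then cover all of $A$ by few translates of a set built from that piece, using Ruzsa's covering lemma (Lemma \ref{rcl}).

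\emph{Step 1.} By Lemma \ref{wpfrlemma} there is an integer $1 \leq d_0 \leq C\log(400K)$ and a subset $A' \subseteq A$ with
\[ |A'| \geq |A|/(100K)^{C'} \quad \text{and} \quad \rk(A') \leq d_0. \]
Fix generators $\xi_1,\dots,\xi_{d_0}$ with $A' \subseteq \langle \xi_1,\dots,\xi_{d_0}\rangle$.

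\emph{Step 2.} Apply Lemma \ref{rcl} in the abelian group $H(\mathbb{C})$ to the pair $A$ and $B = A'$. Since $A' \subseteq A$,
\[ |A + A'| \leq |A+A| \leq K|A| \leq K(100K)^{C'}|A'| \leq (100K)^{C'+1}|A'|. \]
Lemma \ref{rcl} then gives a non-empty $X \subseteq A$ with $|X| \leq (100K)^{C'+1}$ and $A \subseteq X + A' - A'$.

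\emph{Step 3.} Set $T = A' - A'$. This is a finite subset of $H$ lying in the same subgroup $\langle \xi_1,\dots,\xi_{d_0}\rangle$, so $\rk(T) \leq d_0 \leq C\log(400K)$. Setting $d = \rk(T)$, we have $1 \leq d \leq C\log(400K)$, which holds since the rank is by definition at least $1$.

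The only points needing care are bookkeeping. First, the statement asks for $\rk(T) = d$ exactly, which is handled by defining $d := \rk(T)$ rather than $d_0$. Second, the statement writes $X \subseteq H$, and our $X \subseteq A$ is a fortiori such a set. Finally, the constant in $|X|$ is $(100K)^{C'+1}$, matching the statement exactly. There is no real obstacle; the substantive input is entirely in Lemma \ref{wpfrlemma}.
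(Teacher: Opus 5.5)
Your proof is correct and is the argument the paper intends: the paper simply says Lemma \ref{wpfrlemma} ``immediately combines'' with Ruzsa's covering lemma (Lemma \ref{rcl}). Your choices $B = A'$, $T = A' - A'$ and $d := \rk(T)$ supply exactly the omitted bookkeeping, including the bound $K(100K)^{C'} \leq (100K)^{C'+1}$ and the observation that $T$ lies in the same subgroup $\langle \xi_1,\dots,\xi_{d_0}\rangle$, so $\rk(T) \leq d_0$.
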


\section{Proofs of main results} \label{alltheproofs}

In this section, we present the proofs of all of our results mentioned in \S\ref{introduction} and \S\ref{furtherapp}. 

We begin by deducing Theorem \ref{thmBremner} from Corollary \ref{corBremner}. 

\begin{proof}[Proof  of Theorem \ref{thmBremner}]
Setting $G $ to be an elliptic curve in Weierstrass form (\ref{Weierstrass}), $H = \G_a$ and $\mathcal{C}$ the correspondence given by $(x,y,x)$, we can apply  Corollary \ref{corBremner} to obtain the first part of Theorem \ref{thmBremner}. We can proceed similarly with $H = \G_m$ to obtain the desired conclusion for geometric progressions. Setting $\mathcal{C}$ equal to $(x,y,z), x = (u + z)^2$ we get the bound on successive squares. Also, for example choosing $z^2 = x$, we can also bound the length of arithmetic progressions in certain higher genus curves. 
\end{proof}

We will now prove Theorem \ref{thmdelta} by combining Theorem \ref{rankversion} and Lemma \ref{wpfrlemma2}.

\begin{proof}[Proof of Theorem \ref{thmdelta}]
    Let $k \geq 1$ and $d \geq 2$ be integers, let $g \in \mathbb{N}$ be sufficiently large in terms of $d,k$.  We may further assume that $|gA| \leq |A|^k$ since otherwise we would be done. In this case, we may apply Lemma \ref{wpfrlemma2} to find some $\xi_1, \dots, \xi_r \in G$ and some $A' \subseteq A$ such that $r \ll k \log |A|/ \log g$ and
    \[|A'| \gg |A|^{1 - Ck/\log g}  \ \ \text{and} \ \  A' \subseteq \{ n_1 \xi_1 + \dots + n_d \xi_d : n_1, \dots, n_d \in \mathbb{Z} \}. \]
    The latter condition implies that the subgroup generated by $A'$ has rank at most $r$, and so, we may apply Theorem \ref{rankversion} to deduce that
    \begin{align*}
    |\mathcal{C}_1(A) + \dots + \mathcal{C}_g(A)| &  \geq |\mathcal{C}_1(A') + \dots + \mathcal{C}_{2k}(A')| \\
    & \geq c(d,2k)^{-1 - r} |A'|^{2k} \\
    & \gg_{k} c(d,2k)^{-1} |A|^{-\frac{k \log c(d,2k)}{\log g}} |A'|^{2k} \\
    & \gg_{k,d} |A|^{2k - \frac{2Ck^2}{\log g} - \frac{k\log c(2k,d)}{\log g}}.
    \end{align*}
    Choosing $g$ to be sufficiently large so as to ensure that 
    \[ \frac{2Ck^2}{\log g} < k/2 \ \ \text{and}  \ \ \frac{k\log c(d,2k)} {\log g} < k/2 \ \ \text{and} \ \ 2k < g,\]  
    we get that
    \[ |\mathcal{C}_1(A) + \dots + \mathcal{C}_g(A)| \geq |\mathcal{C}_1(A) + \dots + \mathcal{C}_{2k}(A)| \gg_{k,d} |A|^{k}. \]
    This finishes the proof of Theorem \ref{thmdelta}.
\end{proof}

Theorem \ref{esz} is a special case of Theorem \ref{elsz}. Indeed, if $\mathcal{V}$ is an irreducible subvariety of $G^g$ which is not a coset of a subgroup, then we have the trivial inequality ${\rm codef}(\mathcal{V}) \leq {\rm dim}(\mathcal{V})-1$. Thus, we will now prove Theorem \ref{elekesronyai}.

    

\begin{proof}[Proof of Theorem \ref{elekesronyai}]
Since $|A+A| \leq K|A|$, we may apply Lemma \ref{coveringlemma} to deduce that $A \subseteq Y +T$, where $T$ is contained in a subgroup $\Gamma$ of $H$ with rank $d \ll \log K$ and $Y \subset H$ satisfies $|Y| \leq K^{O(1)}$. Now, since $X \subseteq A$, this means that $X$ is also contained in at most $K^{O(1)}$ translates of some subgroup $\Gamma$. By the pigeonhole principle, we can find some $X'\subseteq X$ with $|X'| \geq |X|/K^{O(1)}$ such that $X'$ is contained in a translate of $\Gamma$, and so, $X'$ is contained in a subgroup of rank $d+1$. We now apply Theorem \ref{projections} to deduce that
\begin{align*}
    |\pi_2( (X^g \times \mathcal{B}) \cap \mathcal{V})| & \geq |\pi_2( (X'^g \times \mathcal{B}) \cap \mathcal{V})| \geq \frac{|X'|^g}{C^{2 + d}} \\
    & \geq \frac{|X|^g}{K^{O(g)} 2^{C' \log K}} \gg \frac{|X|^g}{K^{C''}}
\end{align*}
where $C,C', C''>0$ are constants depending only on $g, \deg(\mathcal{V})$.
\end{proof}

Next, we prove Theorem \ref{rankversion}.

\begin{proof}[Proof of Theorem \ref{rankversion}]
We consider the variety $\mathcal{V}_{\rm sum}$ as described in \eqref{Vsumdef} and note that for any finite set $A \subset G$, the set 
\[ \pi_2( (A^g \times \overline{H} ) \cap \mathcal{V}_{\rm sum}) = \mathcal{C}_1(A) + \dots + \mathcal{C}_g(A). \]
Proposition \ref{nondegenerate} implies that this variety $\mathcal{V}_{\rm sum}$ is non-degenerate of dimension $g$ and the projections $\pi_1$ to $G^g$ is surjective and $\pi_2$ to $\overline{H}$ restricted to $\mathcal{V}$ is dominant. Thus, we may apply Lemma \ref{projections} to deduce that 
\[ |\mathcal{C}_1(A) + \dots + \mathcal{C}_g(A)| \geq c(g, \deg(V))^{1+r}|A|^g. \qedhere \]
\end{proof}

We will now deduce Corollary \ref{corBremner} from Theorem \ref{rankversion}.

\begin{proof}[Proof of Corollary \ref{corBremner}]
Throughout this proof, let $C_1, C_2, C_3>0$ be positive constants depending only on $d$. Suppose that $A$ is a generalised arithmetic progression in $\mathcal{C}(\Gamma)$ of rank $k$ for some $k \in \mathbb{N}$. We will first show that $k \leq C^{1 + r}$ for some constant $0 < C \ll_d 1$. In order to see this, let 
\begin{equation} \label{hypercube}
P' = \{P_0 + \ell_1P_1 + \cdots +\ell_kP_\ell  :  0\leq \ell_i \leq 1\} . \end{equation}
Since $L_1, \dots, L_k \geq 2$, we get that $P' \subseteq P$. Since $P$ is proper, we get that $P'$ is also proper, and so, one has
\[ |P'+P'| \leq 3^k = |P'|^{\frac{\log 3}{\log 2}}. \]
Applying Theorem \ref{rankversion} for $P'$, we get that
\[ |P'|^2 C_1^{-1 -r} \ll |P'+P'| \ll |P'|^{\frac{\log 3}{\log 2}},\]
whence $2^k = |P'| \leq C_2^{1 + r}$. Now, we consider the set $P$ and use the preceding upper bound on $|P'|$ to observe that
\[ |P+P| \leq (2L_1-1) \dots (2L_k - 1) \leq 2^k L_1 \dots L_k = 2^k |P| \leq C_2^{1 + r}|P|. \]
 We can now apply Theorem \ref{rankversion} to deduce that 
    $$C_1^{-1 - r}|P|^2 \leq  |P+P| \leq C_2^{1 + r}|P|, $$
and so, we obtain the desired claim $|P| \leq C_3^{1 + r}$.
\end{proof}

We now consider Theorem \ref{sumprod1}. As in the proof of Theorem \ref{thmdelta}, we will derive this by putting together Theorem \ref{rankversion} and Lemma \ref{wpfrlemma2}.

\begin{proof}[Proof of Theorem \ref{sumprod1}]
    Let $A \subseteq G$ be a finite, non-empty set. We may assume that $|A+A| \leq |A|^{1 + \delta}$, since otherwise we would be done. In this case, we apply Lemma \ref{wpfrlemma} to obtain $\xi_1, \dots, \xi_r \in G$ such that 
    \[ r \leq 140( \delta \log|A| + \log 400) \]
    and some subset $A' \subseteq A$ satisfying
    \[ |A'| \geq  \frac{|A|}{(100 |A|^{\delta})^{110}} \ \ \text{and} \ \ A' \subseteq \{ n_1 \xi_1 + \dots + n_r \xi_r : n_1, \dots, n_r \in \mathbb{Z} \}. \]
    The latter condition implies that $A'$ is contained in a subgroup of rank $\leq r$ whence we may apply Theorem \ref{rankversion} to deduce that
    \begin{align*}
        |\mathcal{C}(A') + \mathcal{C}(A')| &  \geq c(d,2)^{ -(r+1)} |A'|^2 \\
        & \geq c |A|^{2-\delta(140  \log c(d,2) + 220)}.
     \end{align*}
    Combining this with the fact that $|\mathcal{C}(A) + \mathcal{C}(A)| \geq |\mathcal{C}(A') + \mathcal{C}(A')|$ then delivers the claimed estimate.
\end{proof}

We prove Theorem \ref{elsz} by combining Theorem \ref{ML} and Lemma \ref{coveringlemma}.

\begin{proof}[Proof of Theorem \ref{elsz}] Since $\mathcal{V}$ is not a co-set, we have for every subgroup $\Gamma$ of rank $r$, the equality 
$$\mathcal{V}\cap \Gamma = \bigcup_{i = 1}^S (\gamma_i + H_i)\cap \Gamma, $$
where the $H_i$ are connected subgroups whose degree is bounded in terms of the degree of $\mathcal{V}$, and $S \leq c^{1 + r}$, where $c = c(\deg(\mathcal{V}), g)>0$ is a constant, see Theorem \ref{ML}. 

As $|A + A|\leq K|A|$, there exists $1 \leq d \leq C\log(400K)$, some finite subset $T \subsetneq G$, with $\text{rk}(T) = d$ and some non-empty subset $X \subsetneq G$, such that $|X| \leq (100K)^{C' + 1}$,  and $A \subseteq X + T$, where $C= 140, C' = 110$, see Lemma \ref{coveringlemma}. We deduce that $A^g$ is contained in the translate of a finite subset $T_g$ of  rank $gd$ translated by a set $X_g$ of cardinality $(100K)^{gC' + g}$. Let $\Gamma_A$ be the group generated by the elements in $T_g$. Note that the rank $r$ of $\Gamma_A$ satisfies 
\[ r \leq gd \ll g \log (100K). \]
Let $x \in X_g$. From Theorem \ref{ML} it follows that $\gamma \in (\mathcal{V}-x )\cap\Gamma_A$ either lies in a co-set of degree bounded in terms of the degree of $\mathcal{V}$ or in  a set of cardinality $c^{1 + r}$. The number of co-sets for fixed $x$ is bounded by $c^{1 + gr}$ and so applying Lemma \ref{lemmaexception} to each co-set contained in $\mathcal{V}$, we get the desired bound.
\end{proof}

\appendix

\section{Diophantine equations } \label{appa}
We show that our results related to Bremner's conjecture have some consequences for the Mordell--Lang conjecture. Roughly speaking, the theorems of David--Philippon and Evertse--Schmidt--Schlickewei give very uniform bounds on the number of points on a variety that lie on a finitely generated group. However, they generally do not provide effective bounds for the height of these points. The situation is, in a strong sense even more dire for elliptic curves. The points on an elliptic curve with coordinates in a number field form a finitely generated group, but there is no known algorithm to determine its generators. It is straightforward to pass from a number estimate to a (partly) effective version of Mordell--Lang. For example for a given curve $C$ with a bound $t$ on the number of rational points $C(K)$ (with $K$ being some number field), we can easily say that the rational points on $C^{t+1}(K)$ lie on a finite union of proper subvarieties given by setting some coordinates equal to each other. However, this is still a far cry from actually determining the Zariski-closure of $C^{t + 1}(K)$. 
Our work does not resolve this issue in general but, fixing a number field $K$, and allowing for a very high dimensional power of an elliptic curve $E^t$, we can construct large families of surfaces, for which we can determine the Zariski-closure of their rational points.   
In what follows we let $E$ be an elliptic curve in  Weierstrass form given by
$$ y^2 = x^3 + ax+ b,$$
with $a,b \in K$, where $K$ is a number field. Let $r$ be the rank of $E(K)$.  
 A tuple $(a_1, a_2, \dots, a_t)$ forms an arithmetic progression if and only if it is a point on the plane $P$ in $\mathbb{A}^t$ defined by the equations
\begin{align}
    Z_{j + 2} - 2Z_{j + 1} + 
    Z_j = 0. \label{arith}
\end{align}
for $j \in \{1, \dots, t - 2\}$.

Now consider the subvariety of $(\mathbb{A}^2)^t \times \mathbb{A}^t$ defined by the equations
\begin{align*}
    Y_j^2 - X_j^3 - aX_j - b = 0
\end{align*}
for $j \in \{1, \dots, t\}$ and
\begin{align*}
    Z_{j + 2} - 2Z_{j + 1} 
    + Z_j = 0
\end{align*}
for $j \in \{1, \dots, t - 2\}$. Such a variety is merely the product $U^t \times P$ of $t$ copies of an affine elliptic curve
\begin{align*}
    U : y^2 = x^3 + ax + b
\end{align*}
with the plane $P$, and thus has dimension $t + 2$. The points on this variety correspond to $t$-tuples of points of an elliptic curve and $t$-term arithmetic progressions, with no relation between them.

Finally we impose algebraic relations
\begin{align*}
    P_i(X_i, Y_i, Z_i) = 0, ~~ i = 1, \dots, t
\end{align*}
between the points on the elliptic curve, and the terms of the arithmetic progression. So we also ask that $P_i \notin K[X_i, Y_i]\cup K[Z_i]$ is irreducible such that they induce a correspondence. For example, one can take
\begin{align*}
    P_i(X_i, Y_i, Z_i) =
    X_i - Z_i^2,~~ i = 1, \dots, t,
\end{align*}
which encodes the condition that the $x$-coordinates of the points on $U$ should be squares of elements of an arithmetic progression. Each of these new relations $P_i$ decreases the dimension by one, because they only involve the variables $(X_i, Y_i, Z_i)$. It follows that the subvariety of $U^t \times P$ defined by the $t$ new relations is a surface $S_A$.
A similar construction works with the quadratic equations 
\begin{align}
    Z_{j + 1}Z_1= Z_jZ_2, ~~ j = 2, \dots, t-1.\label{geom} 
\end{align}
We get a variety $U^t \times Q$ and imposing the algebraic conditions given by $P_i$ result in a surface $S_G$.
We denote by $D \subsetneq S_A, S_B$, the subvariety, that is given by the additional equation $Z_1 = Z_2$. Thus the points of $D$ correspond to degenerate arithmetic or geometric progressions.

This is all slightly technical but for $P_i = Z_i - X_i$ the rational point $S_A(K)$ correspond to arithmetic progressions in $E(K)$ of length $t$ and similarly for $S_G(K)$ and geometric progressions. It is not hard to see, and we give the details in the paragraph below, that these surfaces $S_A, S_B$ are then of general type. The Mordell--Lang conjecture predicts that the Zariski-closure of $S_A(K), S_B(K)$ consists of a finite union of elliptic curves and points. This  implies that there are only finitely many arithmetic or geometric sequences of length 3 in an elliptic curve. 

In this setting, Corollary \ref{corBremner} delivers a more precise version that exactly predicts the distribution of rational points, albeit for $t$ depending on $K$. 

\begin{thm}\label{BombieriLang}
   For each $d,r \geq 0$, there exists and effectively computable $t$, such that $S_A(K) =S_B(K)=  D(K)$.  
\end{thm}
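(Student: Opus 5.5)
The plan is to reduce the statement to Corollary \ref{corBremner}: a $K$-rational point of $S_A$ (resp. $S_G$) off $D$ yields a long non-degenerate progression in $\mathcal{C}(\Gamma)$, where $\Gamma=E(K)$ has rank $r$. Fix $d$ as a bound for the degrees of the correspondences $\mathcal{C}_i$ cut out by the polynomials $P_i$ in $U\times\G_a$ (resp. $U\times \G_m$ in the geometric case). To get a single correspondence, either take all $P_i$ equal to a fixed $P$, or note that the proof of Corollary \ref{corBremner} only uses Theorem \ref{rankversion}, which already allows distinct $\mathcal{C}_1,\dots,\mathcal{C}_g$ of degree at most $d$. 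Let $D_0=D(d)$ be the constant of Corollary \ref{corBremner}, and choose $t> D_0^{1+r}$. This $t$ depends only on $d,r$ and is effective because $D(d)$ is: it comes from the effective constants in Theorem \ref{ML}, Lemma \ref{cosets} and Theorem \ref{projections}.

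Now take a point of $S_A(K)$, that is, $(X_i,Y_i)\in U(K)$ and $Z_1,\dots,Z_t\in K$ with $Z_{j+2}-2Z_{j+1}+Z_j=0$ and $P_i(X_i,Y_i,Z_i)=0$. Then $Z_j=Z_1+(j-1)(Z_2-Z_1)$, and each $Z_i$ lies in $\mathcal{C}_i(\{(X_i,Y_i)\})\subseteq \mathcal{C}_i(\Gamma)$. If the point is not in $D$, then $Z_1\neq Z_2$, so $\{Z_1,\dots,Z_t\}$ is a proper arithmetic progression of rank $1$ with $t$ distinct terms. This needs characteristic $0$, and for the geometric case it also needs $Z_2/Z_1$ not to be a root of unity.

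Corollary \ref{corBremner} then gives $t\le D_0^{1+r}$, which is a contradiction. Hence every $K$-point lies in $D$, i.e. $S_A(K)=D(K)$. The inclusion $D(K)\subseteq S_A(K)$ is trivial.

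For $S_G$ the argument is the same, with $H=\G_m$: relation \eqref{geom} forces $Z_j=Z_1(Z_2/Z_1)^{j-1}$, which is a geometric progression in the group $\G_m$. Points with some $Z_i=0$ must be handled separately. If $Z_1=0$ the equations force further degeneracy, so these points can be absorbed into $D$, or into the boundary where the correspondence is not defined; I would check this directly from \eqref{geom}.

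The main obstacle is the degenerate ratio case: $Z_2/Z_1$ a nontrivial root of unity of order $m$. There the progression has only $m$ distinct terms, so Corollary \ref{corBremner} gives no contradiction for large $t$. I would handle it by bounding the roots of unity in $K$ by a constant depending on $[K:\mathbb{Q}]$. Alternatively, observe that properness then fails, so such points must be added to the exceptional set. This suggests that the statement should read $D$ together with the finitely many torsion-ratio subvarieties $Z_2=\zeta Z_1$, or that $t$ may depend on $K$ as the paper indicates.

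Another subtlety is ensuring that each $\mathcal{C}_i$ is not a translate of an algebraic subgroup. For $P_i=X_i-Z_i^2$ this holds, since the correspondence maps $E$ to $\G_a$ and these groups are non-isogenous. In general it should be imposed as a hypothesis via the irreducibility and non-triviality conditions on $P_i$.
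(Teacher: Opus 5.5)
Your main line is the paper's: the paper simply invokes Corollary \ref{corBremner}, i.e.\ takes $t$ beyond its bound $D(d)^{1+r}$. For $S_A$ with a single polynomial $P_i=P$, your deduction is exactly that and is fine: $Z_1\neq Z_2$ gives $t$ distinct terms of a proper rank-one progression in $\mathcal{C}(E(K))$.

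Your alternative for distinct $P_i$ is wrong, though. Theorem \ref{rankversion} controls $\mathcal{C}_1(A)+\dots+\mathcal{C}_g(A)$ with each $\mathcal{C}_i$ applied to all of $A$. On $S_A$, each $\mathcal{C}_i$ produces only the single term $Z_i$, so there is no set $A$ that sees many terms of the progression under one correspondence. The conclusion genuinely fails in this case. Take $P_i=X_i-Z_i+i$ (degree $1$), any $Q\in U(K)$, and $Z_i=x(Q)+i$. Then $(Q,\dots,Q;Z_1,\dots,Z_t)$ lies in $S_A(K)\setminus D(K)$ for every $t$. So the $P_i$ must be taken equal, as in the paper's examples.

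For $S_G$, your root-of-unity diagnosis is correct, and it is a defect of the statement that the paper's one-line deduction does not address. However, neither bounding the roots of unity in $K$ nor letting $t$ depend on $K$ repairs it, since $-1$ lies in every $K$. For example, take $E\colon y^2=x^3-x+1$ and $P_i=Z_i-X_i$, with $Z_i=(-1)^{i-1}$ and $(X_i,Y_i)=((-1)^{i-1},1)$. This satisfies \eqref{geom} and gives a point of $S_G(\mathbb{Q})$ off $D$ for every $t$. Only your other remedy works: add to the exceptional set the finitely many subvarieties $Z_2=\zeta Z_1$ with $\zeta\neq 1$ a root of unity in $K$.

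On zero coordinates you treated only $Z_1=0$, which does force $Z_2=0$ (from $Z_3Z_1=Z_2^2$). The case $Z_1\neq 0=Z_2$ forces $Z_3=\dots=Z_t=0$ and is \emph{not} in $D$. On the same curve, $Z=(1,0,\dots,0)$ with $(X_1,Y_1)=(1,1)$ and $(X_i,Y_i)=(0,1)$ for $i\geq 2$ is such a point. Corollary \ref{corBremner} says nothing here because $0\notin\G_m$, so $\{Z_2=0\}$ must be excluded as well. Once $Z_1Z_2\neq 0$ and $Z_2/Z_1$ is not a root of unity, all $Z_j=Z_1(Z_2/Z_1)^{j-1}$ lie in $\G_m$ and are distinct, and your argument goes through.
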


The variety $D$ is either the empty set or a finite union of copies of the elliptic curve $E$. 
We can embed $\mathbb{A}^{3t}$ into $\mathbb{P}^{3t}$ via 
$$(X_1,Y_1 \dots ,X_t, Y_t, Z_1, \dots, Z_t) \hookrightarrow [X_1,Y_1,\dots, X_t, Y_t,Z_1, \dots, Z_t, 1] $$
and the Zariski-closure $\overline{S}_A$ of $S_A$ is a projective surface. Let $d_1, \dots, d_t$ be the degrees $P_1, \dots, P_t$ and if $\overline{S}_A$ is smooth, then the degree of the canonical class of $\overline{S}_A$ is 
$$d_1 + \cdots + d_t + t- 3 , $$
see \cite[Examples 5.1.1]{HindrySilverman}. Thus $\overline{S}_A$ is of general type. A similar argument works for $S_G$. 

\section{Degenerate polynomials} \label{Appb}

Let $G, \mathcal{B}, \pi, \pi_2$ be as in Definition \ref{degenvar}.   We denote $P \in \mathbb{C}[x_1, \dots, x_g]$ to be degenerate with respect to $\mathbb{G}^g$ if the variety $\mathcal{V} \subseteq \mathbb{G}_a^g \times \mathcal{B}$ defined by the equation $P(x_1, \dots, x_g) = t$ is degenerate.   In this section, we briefly comment on possible ways to check whether a given polynomial $P \in \mathbb{C}[x_1, \dots, x_g]$ is degenerate with respect to $G^g$ when $G = \G_a$ or $G = \G_m$. In the latter case, it was shown by the second author \cite{Mu2024a} that the polynomial
\[ \sum_{\vec{\alpha} \in E} c_{\vec{\alpha}} x_1^{\alpha_1} \dots x_g^{\alpha_g}, \]
where $E \subseteq \mathbb{C}^g$ is a finite, non-empty set and $c_{\vec{\alpha}} \neq 0$ for all $\vec{\alpha} \in E$, is non-degenerate if and only if $\{\sum_{\vec{\alpha} \in E} z_{\vec{\alpha}} \cdot \vec{\alpha} : z_{\vec{\alpha}} \in \mathbb{C} \} = \mathbb{C}^g$. 

We will now provide a criterion to check whether a polynomial $P \in \mathbb{C}[x_1, \dots, x_g]$ is non-degenerate with respect to $\mathbb{G}_a^g$.


\begin{lemma} \label{degenequiv}
    If $P \in \mathbb{C}[x_1, \dots, x_g]$ is degenerate with respect to $\mathbb{G}_a^g$, then there exists non-zero $\vec{v} \in \mathbb{C}^g$ such that the identity
    \[ \vec{v} \cdot (( \nabla P)(x_1, \dots, x_g)) = 0\]
    holds. 
\end{lemma}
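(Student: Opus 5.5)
Unwinding the definition, degeneracy means there is a connected algebraic subgroup $H' \subseteq \mathbb{G}_a^g$ of positive dimension and a proper subvariety $\mathcal{W} \subsetneq (\mathbb{G}_a^g/H') \times \mathcal{B}$ with $\mathcal{V} = \pi_{H'}^{-1}(\mathcal{W})$, where $\mathcal{V} = \{(\vec{x},t) : P(\vec{x}) = t\}$. The plan is to extract a single direction $\vec{v}$ from $H'$ along which $P$ is constant, and then differentiate. Connected algebraic subgroups of $\mathbb{G}_a^g$ over $\mathbb{C}$ are exactly the $\mathbb{C}$-linear subspaces. So $H'$ is a linear subspace $V' \subseteq \mathbb{C}^g$ with $\dim V' \geq 1$, and I pick any non-zero $\vec{v} \in V'$.

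The key step is that $\mathcal{V}$ is a union of fibres of $\pi_{H'}$, and these fibres are of the form $(\vec{x} + V') \times \{t\}$. Hence for every $(\vec{x}, t) \in \mathcal{V}$ and every $s \in \mathbb{C}$, the point $(\vec{x} + s\vec{v}, t)$ lies in $\mathcal{V}$. Taking $t = P(\vec{x})$, this says
\[ P(\vec{x} + s\vec{v}) = P(\vec{x}) \quad \text{for all } \vec{x} \in \mathbb{C}^g,\ s \in \mathbb{C}. \]
Only the affine part is needed here, since every $\vec{x} \in \mathbb{C}^g$ gives the point $(\vec{x}, P(\vec{x}))$ of $\mathcal{V}$ with $t$ finite.

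Finally, I differentiate the identity in $s$ at $s = 0$. The chain rule gives $\vec{v} \cdot (\nabla P)(\vec{x}) = 0$ for all $\vec{x}$, which is an identity of polynomials, as claimed. Equivalently, $s \mapsto P(\vec{x} + s\vec{v}) - P(\vec{x})$ is a polynomial in $s$ vanishing identically, so its linear coefficient $\vec{v}\cdot\nabla P(\vec{x})$ is zero.

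The only point needing care is the first step: that connected algebraic subgroups of $\mathbb{G}_a^g$ are linear subspaces, and that the fibres of $\pi_{H'}$ are translates of them. In characteristic zero this is standard. A connected subgroup is stable under multiplication by $\mathbb{Z}$, so its Zariski closure properties give stability under $\mathbb{Q}$ and then under $\mathbb{C}$. Alternatively, it equals the exponential image of its Lie algebra, and $\exp = \mathrm{id}$ on $\mathbb{G}_a$. The rest is formal.
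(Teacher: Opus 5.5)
Your proof is correct and is essentially the paper's argument: both find a non-zero $\vec{v}$ with $P(\vec{x}+s\vec{v}) = P(\vec{x})$ identically and differentiate at $s=0$. The only difference is that the paper first asserts, without justification, a factorisation $P = F(L_1,\dots,L_k)$ with $k<g$ linear forms and takes $\vec{v}$ in their common kernel. You instead read $\vec{v}$ off directly from $H'$ being a linear subspace whose cosets are the fibres of $\pi_{H'}$, which is slightly more direct and fills in the step the paper leaves implicit.
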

\begin{proof}
Let $\mathcal{V}$ be given by the variety $P(\vec{x}) = t$, where, by abuse of notation, we denote $\vec{x} = (x_1, \dots, x_g)$. Since $G = \mathbb{G}_a$ and $\mathcal{V}$ is degenerate, one can deduce that 
\[ P(\vec{x}) = F(L_1, \dots, L_k)\]
for some $0 \leq k < g$ and some linear forms $L_1, \dots, L_k \in \mathbb{C}[x_1, \dots, x_g]$ and some $F \in \mathbb{C}[y_1, \dots, y_k]$. Since $k < g$, there exists some non-zero $\vec{v} = (v_1, \dots, v_g) \in \mathbb{C}^g$ such that the identity 
\[ L_i(\vec{x}) = L_i(\vec{x} + t \cdot \vec{v})\]
holds for all $1 \leq i \leq k$ and all $t \in \mathbb{R}$. Here $t \cdot \vec{v} = (tv_1, \dots, tv_g)$. In particular, we have the identity
\[ P(\vec{x}) = P(\vec{x} + t \cdot \vec{v}).\]
Differentiating with respect to $t$, we get that
\begin{align*}
0   = \sum_{j=1}^k   \left(  \frac{\partial F }{\partial y_j} \right) (L_1 (\vec{x} + t \cdot \vec{v}), \dots, L_k(\vec{x} + t \cdot \vec{v}))  \cdot \sum_{l=1}^g \left( \frac{\partial L_j}{\partial x_l} \right)(\vec{x} + t \cdot \vec{v}) \cdot v_l 
\end{align*}
for all $t \in \mathbb{R}$. Setting $t = 0$ and rearranging the sums, we get that
\begin{align*}
0 & = \sum_{l=1}^g v_l  \left( \sum_{j=1}^k \left( \frac{\partial F }{\partial y_j} \right) (L_1 (\vec{x}), \dots, L_k(\vec{x})) \cdot  \left( \frac{\partial L_j}{\partial x_l} \right)(\vec{x}) \right)  = \vec{v} \cdot ( (\nabla P)(\vec{x})  ). \qedhere
\end{align*}
\end{proof}

We will use this to prove that the polynomial $P(x,y,z) = xy + yz + zx$ is non-degenerate with respect to $\G_a^3$. Indeed, if $P$ were to be degenerate with respect to $\mathbb{G}_a^3$, then Lemma \ref{degenequiv} implies that there would exist some non-zero $\vec{v} = (v_1, v_2, v_3) \in \mathbb{C}^3$ such that the identity
\[ \vec{v}\cdot ( (\nabla P)(x,y,z)  ) = v_1 (y+z) + v_2 (x+y) + v_3(z+x) = 0 \]
holds. In particular, this would mean that
\[ v_1 + v_2 = v_2 + v_3 = v_3 + v_1 = 0,\]
that is, $v_1 = v_2 = v_3 = 0$. This contradicts the hypothesis that $\vec{v}$ is non-zero, and so, $P$ must be non-degenerate with respect to $\G_a^3$.

\bibliographystyle{amsplain} 
\bibliography{arxivbib}

\end{document}